\newtheorem{theorem}{Theorem}[section]
\newtheorem{lemma}[theorem]{Lemma}
\newtheorem{claim}[theorem]{Claim}
\newtheorem{corollary}[theorem]{Corollary}
\newtheorem{proposition}[theorem]{Proposition}
\theoremstyle{definition}
\newtheorem{definition}[theorem]{Definition}
\theoremstyle{remark}
\newtheorem{remark}[theorem]{Remark}
\newtheorem {question}[theorem]{Question}
\numberwithin{equation}{section}
\newtheoremstyle{noparens}%
  {}{}%
{}{}%
{\bfseries}{.}%
{ }%
{\thmname{#1}\thmnumber{ #2}\thmnote{ #3}}
\theoremstyle{noparens}
\newtheorem*{question*}{Question}
\newtheorem*{theorem*}{Theorem}
\newcommand{\uhr}{{\upharpoonright}}
\newcommand{\leaf}{\operatorname{\ell}}
\def\t{\tilde}
\def\msf{\mathsf}
\def\mcal{\mathcal}
\def\<{<^*}
\def\>{>^*}
\def\hyperimmune{hyperimmune}
\def\wstable{weakly stable}
\def\wtcoh{\mathsf{wCTT}_k^2}
\def\tcoh{\mathsf{CTT}_k^2}
\def\PA{PA}
\def\treeset{tree}
\def\somewheredense{somewhere dense}
\def\tt{\msf{TT}}
\def\stt{\msf{STT}}
\def\treesplit{tree-split}
\def\conciseversion{refinement}
\def\concised{refined}
\def\cross{cross product}
\def\hierarchy{hierarchy}
\def\twobranching{$2$-branching}
\def\partition{partition}
\def\disperse{scattered}
\def\h{\hat}
\def\v{\vec}
\def\k{k}
\def\mbq{\v{Q}}
\def\sufficient{sufficient}
\title{The Strength  of  Ramsey's  Theorem For Pairs over trees:\\ I.  Weak K\"onig's Lemma}
 \author{C.~T.~Chong}
\address{
Department of Mathematics, National University of Singapore, Singapore 119076.}
\email{chongct@nus.edu.sg}
\author{Wei Li}
\address{
 Department of Mathematics, National University of Singapore, Singapore 119076.}
\email{matliw@nus.edu.sg}
\author{Lu Liu}
\address{Department of Mathematics,
Central South University,
City Changsha, Hunan Province,
China. 410083}
\email{g.jiayi.liu@gmail.com}
\author{Yue Yang }
 \address{
 Department of Mathematics, National University of Singapore, Singapore 119076.}
\email{matyangy@nus.edu.sg}
\thanks{ Chong's research was partially supported by NUS grants C-146-000-042-001 and WBS : R389-000-040-101.
Liu's research was partially supported by NUS grant R389-000-040-101
and Natural Science Foundation of Hunan Province of China
2018JJ3623. Yang's research was partially supported by NUS AcRF Tier 1 grant R146-000-231-114 and MOE2016-T2-1-019.
}
\subjclass[2010]{Primary 03B30, 03F35, 03D80; Secondary 05D10}
\keywords{Reverse mathematics,  Ramsey's theorem for pairs, tree theorem for pairs, weak K\"onig's lemma, weak weak K\"onig's lemma.}
\begin{document}
 \maketitle

 \begin{abstract}
 Let $\tt^2_k$ denote the combinatorial principle stating  that every $k$-coloring of pairs of compatible nodes in the full binary tree has a homogeneous solution, i.e.~an isomorphic subtree in which all pairs of compatible nodes have the same color. Let $\mathsf{WKL}_0$ be the subsystem of second order arithmetic  consisting of the base system $\mathsf{RCA}_0$ together with the principle (called Weak K\"onig's Lemma)  stating that every infinite subtree of the full binary tree has an infinite path. We show that over $\mathsf{RCA}_0$,
   $\tt^2_k$ doe not imply $\mathsf{WKL}_0$. This solves the open problem on the relative strength between the two major subsystems of second order arithmetic.

 \end{abstract}

\section{Introduction}
\label{secintro}
Let $k, n\in\mathbb{N}$ and let $[\mathbb{N}]^n$ denote the collection of $n$-element subsets of the set of natural numbers $\mathbb{N}$.
Ramsey's theorem for $[\mathbb{N}]^n$ in $k$ colors ($\mathsf{RT}^n_k$) states that every such coloring has a homogeneous set,
i.e.~an infinite set all of whose $n$-element subsets have the same color (we only consider $k\ge 2$ as $k=1$ is immediate).
The proof-theoretic strength of $\mathsf{RT}^n_k$ is a subject of major interest  in reverse mathematics in recent  years,
inspired by the seminal works of Seetapun and Slaman \cite{Seetapun1995strength}), and Cholak, Jockusch and Slaman \cite{Cholak2001strength}.
Recall that $\mathsf{RCA}_0$ denotes the base system in reverse mathematics for second order arithmetic.
The overall picture that emerges from these investigations is that over $\mathsf{RCA}_0$,
$\mathsf{RT}^n_k$ is equivalent to the arithmetical comprehension axiom system $\mathsf{ACA}_0$ when $n\ge 3$
(a corollary of Jockusch \cite{Jockusch1972Ramseys}), and is strictly weaker than $\mathsf{ACA}_0$ when $n=2$ (\cite{Seetapun1995strength}).
Indeed the case $n=2$ is of particular interest and constitutes the bulk of the effort and energy invested in this subject.
It is now known that (i) $\mathsf{RT}^2_k$ does not imply (hence not comparable with) $\mathsf{WKL}_0$,
the subsystem of second order arithmetic  which adds to $\mathsf{RCA}_0$ the principle $\mathsf{WKL}$ (Weak K\"onig's Lemma) stating that every infinite binary tree has an infinite path
(Liu \cite{Liu2012RT22} and \cite{Liu2015Cone} (for a strengthening of the result to the Weak Weak K\"onig's Lemma principle $\msf{WWKL}$ (see below))),
(ii) $\mathsf{RT}^2_k$ is strictly stronger than its stable counterpart $\mathsf{SRT}^2_k$,
and does not imply the induction scheme for $\Sigma^0_2$-formulas (Chong, Slaman and Yang \cite{Chong2014metamathematics} and \cite{chong2017inductive} respectively),
and (iii) $\mathsf{RT}^2_k$ does not prove new $\Pi^0_3$-statements about arithmetic over $\mathsf{RCA}_0$ (Patey and Yokoyama \cite{Patey2015proof}.

$\mathsf{RT}^n_k$ has a natural generalization to the Cantor space.  Let $2^{<\omega}$ denote the full binary tree (we use $\mathbb{N}$ and $\omega$ interchangeably to denote the set of natural numbers).
One now considers $k$ colorings of sets of size $n$ of compatible nodes in $2^{<\omega}$ (denoted as $[2^{<\omega}]^n$).
The principle $\tt^n_k$ states that every coloring of $[2^{<\omega}]^n$ in $k$ colors has a homogeneous solution,
i.e.~a subtree isomorphic to $2^{<\omega}$ in which all size $n$ compatible nodes have the same color.
This generalization raises a new set of interesting questions, not least because a homogeneous tree has to be topologically the full binary tree.
The isomorphism requirement is implicit in $\mathsf{RT}^n_k$ and automatically satisfied by $\mathbb N$
since it is an intrinsic property of $\mathbb{N}$ that all infinite subsets are order isomorphic.
This advantage no longer holds for infinite subtrees of $2^{<\omega}$  and new technical challenges have to be overcome to produce a solution of an instance of $\tt^n_k$ with a prescribed property.

By identifying a node in $2^{<\omega}$ with its length, it is immediate that every instance of $\mathsf{RT}^n_k$ induces an instance of $\tt^n_k$.
This implies that $\mathsf{RT}^n_k$ is a consequence of $\tt^n_k$ over $\mathsf{RCA}_0$.
In particular, one concludes from this that $\tt^n_k$, just like $\mathsf{RT}^n_k$, is equivalent to $\mathsf{ACA}_0$ for $n\ge 3$.
Dzhafarov and Patey \cite{DzhafarovColoring} have proved that $\tt^2_k$ is strictly weaker than $\tt^n_k$ for $n\ge 3$,
and it is not difficult to see that for each $k\in\mathbb{N}$, $\mathsf{RT}^1_k$ and $\tt^1_k$ are consequences of $\mathsf{RCA}_0$
(the situation is dramatically different for $\tt^1$ which is syntactically defined in the language of second order arithmetic as
$(\forall k) \tt^1_k$, see Corduan, Groszek and Mileti \cite{Corduan2010Reverse} and Chong, Li, Wang and Yang \cite{chong2017inductive}).
On the other hand, Patey \cite{Patey2015strength} has shown  that $\mathsf{RT}^2_k$ does not imply $\tt^2_k$,  presenting the first example of a Ramsey type theory
whose proof-theoretic strength lies strictly between $\mathsf{RCA}_0+\mathsf{RT}^2_k$ and $\mathsf{ACA}_0$,
Thus the central problem  concerning the status of tree colorings again revolves around the case $n=2$. In particular,
 where does $\tt^2_k$ stand {\it vis-\`a-vis}  the ``big five'' systems in reverse mathematics, the first three of which, in increasing strength, are $\mathsf{RCA}_0$, $\mathsf{WKL}_0$ and $\mathsf{ACA}_0$?  Since $\tt^2_2+\mathsf{RCA}_0\not\rightarrow \mathsf{ACA}_0$ by Dzhafarcv and Patey \cite{DzhafarovColoring}, the question is then whether
 $\tt^2_k$ implies  $\mathsf{WKL}_0$ over  $\msf{RCA}_0$ (see  \cite{DzhafarovColoring}).
 The main result of this paper answers this question:

\vskip.1in

\begin{restatable}{theorem}{main} {\rm (Main Theorem)}
\label{coro102}
Over $\mathsf{RCA}_0$, $\tt^2_k$ does not imply $\mathsf{WWKL}$. Hence $\mathsf{RCA}_0+\tt^2_k\not\rightarrow \mathsf{WKL}_0$.
\end{restatable}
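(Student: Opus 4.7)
The plan is to prove the $\omega$-model separation: construct an $\omega$-model of $\mathsf{RCA}_0 + \tt^2_k$ in which some fixed $\Pi^0_1$ class of positive measure has no member. Iteratively closing under solutions, the key preservation lemma to establish is that for every set $A$, every $A$-computable $k$-coloring $f \colon [2^{<\omega}]^2 \to k$, and every $\Pi^0_1(A)$ class $C \subseteq 2^\omega$ with $\mu(C) > 0$, there is a homogeneous solution $T$ to $f$ (a subtree isomorphic to $2^{<\omega}$ on whose compatible pairs $f$ is constant) such that $C$ still has positive measure relative to $A \oplus T$, and in particular $A \oplus T$ computes no member of $C$. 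Iterating this over all instances, together with a diagonalization preserving a fixed $\Pi^0_1$ class $C_0$ of positive measure throughout, produces the desired model witnessing the failure of $\mathsf{WWKL}$ over $\tt^2_k$.

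To build such a solution $T$ I would force with tree germs: a condition consists of a finite approximation $t$ to the first few levels of $T$ together with a reservoir describing how $t$ may legally be grown into a full homogeneous isomorphic copy of $2^{<\omega}$, paired with a positive lower bound on a surviving sub-class of $C$. Generic extensions come in three flavors: height extensions lengthening each current leaf, branching extensions introducing two incomparable successors above a leaf, and measure-preservation extensions that decide, for a given Turing functional $\Phi$, whether $\Phi^{T \oplus f}$ avoids $C$. The first two are handled by standard density arguments exploiting the rich automorphism structure of $2^{<\omega}$; the third is the core of the proof.

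For the third flavor, the plan is to adapt the first-jump splitting technique from Liu \cite{Liu2015Cone} used for $\mathsf{RT}^2_2 \not\to \mathsf{WWKL}$. Given a condition $c$ and a functional $\Phi$, one aims for a dichotomy: either $c$ has a direct extension on which $\Phi^{T \oplus f}$ provably falls outside $C$ on positive measure, or $c$ splits along the $k$ colors of $f$ into $k$ candidate extensions, each preserving a distinct region of $C$. Since these regions jointly cover a set of positive measure, an averaging argument selects at least one retaining positive mass. The chosen extension, however, must itself be a legitimate tree germ: its reservoir must still admit further growth into a full homogeneous isomorphic copy of $2^{<\omega}$ as demanded by the remaining density requirements.

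The principal obstacle is exactly this legitimacy constraint. In the set case, any infinite subset of the reservoir is a legal extension, so the analogous dichotomy is essentially automatic; here the reservoir must carry enough combinatorial structure to witness a $2$-branching, unbounded-height subtree on which $f$ is homogeneous on every compatible pair. Showing that after an arbitrary color-split at least one post-split reservoir still supports such a tree amounts to a tree-valued largeness-preservation principle that goes well beyond its set-theoretic counterpart. Identifying the correct notion of ``large'' tree reservoir (closed under the splitting operation while continuing to embed a full binary sub-tree) and verifying that such reservoirs form a cofinal sub-class of the forcing is where I expect the bulk of the technical work to concentrate, and is the step I view as the central obstacle to overcome.
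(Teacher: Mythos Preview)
Your high-level plan---iteratively add $\tt^2_k$-solutions preserving non-computation of a member of a fixed positive-measure $\Pi^0_1$ class---is the right shape, and you have correctly located the central obstacle in the tree-reservoir legitimacy constraint. But there is a concrete gap at the step ``$c$ splits along the $k$ colors of $f$ into $k$ candidate extensions'': for a coloring of \emph{pairs} this split is not well-defined, since $f(\sigma,\tau)$ depends on both nodes and there is no canonical partition of the reservoir into $k$ color classes. In the $\msf{RT}^2_2$ argument this is exactly why one first passes through $\msf{COH}+\msf{SRT}^2_2$ to a $\Delta^0_2$ instance of $\msf{RT}^1_k$; only at the singleton level does the color-split make sense. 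Your proposal attempts to force directly for the pair problem without such a reduction, and the dichotomy as stated does not go through.

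The paper's route is quite different from a direct tree-germ forcing. It works with \emph{avoidance of bounded enumeration} rather than measure preservation, and never forces for $\tt^2_k$ as a whole: it decomposes $\tt^2_k$ into $\msf{CTT}^2_k+\msf{STT}^2_k$, further splits $\msf{CTT}^2_k$ into $\msf{wCTT}^2_k$ plus a new ``$k$-tree-split principle,'' and reduces $\msf{STT}^2_k$ to \emph{strong} avoidance for $\tt^1_k$. The hardest piece, strong avoidance for $\tt^1_k$, is first reduced to a $\Pi^0_1$-class version via a combinatorial gadget called a \emph{$k$-hierarchy}, and then handled by a forcing whose conditions carry finite families of pre-conditions indexed by \emph{vectors} of colorings drawn from a shrinking $\Pi^0_1$-class, with Type~I extensions (making pre-conditions bad) and Type~II extensions (raising the dimension of the coloring vector using $(k,l)$-\emph{scattered} families). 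These devices---hierarchies, scattered families, dimensional coloring vectors, and the associated ``sufficiency'' invariant---are precisely the replacement for the ``correct notion of large tree reservoir'' you flag as missing, and none of them is visible from the $\msf{RT}^2_2$ argument alone.
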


\vskip.1in

$\msf{WWKL}$ is the principle introduced by Yu and Simpson \cite{Yu1990Measure} stating that every infinite binary tree $T$ of positive measure has an infinite path,
where $T$ has positive measure if there is a positive rational number $r\le 1$ such that at every level $s$ of $T$, there are at least $2^sr$ many nodes.
Thus Theorem 1.1 says  that  the extra computational power and structural complexity vested in a tree are not sufficient to prove weak K\"onig's lemma.

One can decompose $\tt^2_k$ into the sum of two combinatorial principles, the cohesive tree principle $\msf{CTT}^2_k$ and the stable tree principle $\msf{STT}^2_k$
(Dzhafarov,  Hirst and Lakins \cite{Dzhafarov2009polarized}; see Definitions \ref{defstt} and \ref{deftcoh}).
Our proof proceeds by first showing  in  Section \ref{tt2wklsec3}
 that over $\mathsf{RCA}_0$, $\msf{CTT}^2_k$  does not imply $\msf{WWKL}$ (Corollary  \ref{coro103}). To do this, we introduce a new principle called the $k$-tree-split principle ($k$-$\mathsf{TSP}$) and show that $\mathsf{CTT}^2_k$ is the sum of the weak $\mathsf{CTT}^2_k$ principle ($\msf{wCTT}^2_k$)  and $k$-$\mathsf{TSP}$, and  each of them does not  imply $\mathsf{WWKL}$.
  Then in  Section   \ref{tt22vswklsec4}  we establish  the corresponding result for $\stt^2_2$
 (Theorem  \ref{th101}).
These are achieved by showing that $\msf{wCTT}^2_k$, $k$-$\msf{TSP}$  and $\msf{STT}^2_k$ each   satisfy a property called
{\it  avoidance of bounded enumeration} (Definition \ref{defavoidance}).
Theorem \ref{coro102} follows as a consequence.
In the next section, we fix the notations and terminologies  to be used in the paper and formally define the combinatorial principles to be considered.
The final section presents a list of questions.

\section{Preliminaries}

\subsection{Notations and terminologies}\label{tt22wklpre}

We use $i,k,n,m, d$ to denote natural  numbers and
 identify a number $k\in\omega$ with the set $\{0,\dots,k-1\}$ and write $\mcal{P}(k)$ for $\{A: A\subseteq k\}$.

Denote strings (or, equivalently, nodes) in $2^{<\omega}$ by Greek letters $\rho, \sigma, \tau,\dots$.
We say $\sigma$ is extended by $\tau$ (written $\sigma\preceq\tau$ or $\tau\succeq \sigma$) if it is an initial segment of $\tau$.
The symbol $\prec$ is reserved for proper initial segment, including that of an infinite set $X\subseteq \omega$
(upon identifying $X$ with its characteristic function).
A pair of strings $\rho_0, \rho_1$ are incompatible, written as $\rho\nmid \rho_1$, if neither is extended by the other.
A set $B\subseteq 2^{<\omega}$ is \emph{prefix free}, also called  an {\it antichain}, if any two $\rho, \sigma\in  B$ are incompatible.
For emphasis, such a $B$ is sometimes   written as  $\vec{\sigma} = (\sigma_0,\dots,\sigma_n)$, where $\sigma_i \nmid \sigma_j$ when $i\ne j$.
We write $\vec{\rho} = (\rho_0,\dots,\rho_n)\succeq \vec{\sigma}= (\sigma_0,\dots,\sigma_n)$
if $\rho_i\succeq \sigma_i$ for all $i\leq n$.
We also abuse the notation $\vec{\sigma}$ and regard $\vec{\sigma}= (\sigma_0,\dots,\sigma_n)$ as the finite set $\{\sigma_0,\dots,\sigma_n\}$.
For example, we interpret $\vec{\rho}\subseteq X$ and $\vec{\rho}\cap \vec{\tau}$ in the set-theoretic sense.
For a string $\rho \in 2^{<\omega}$, we let $[\rho]^{\preceq}=\{\sigma: \sigma\succeq \rho\}$;
similarly, for $X\subseteq 2^{<\omega}$, let $[X]^{\preceq} = \{\sigma: \sigma\succeq \rho\text{ for some }\rho\in X\}$.  Let $\mathsf{Fin}(X)$ be the collection of all finite subsets of $X$.

A \emph{tree} $T\subseteq 2^{<\omega}$ is a set of strings endowed with a structure determined by the binary relation $\preceq$ (we do not require $T$  to be closed under initial segments).
Members of $T$ are also referred to as nodes of $T$.
We write $|\rho|_T$ for the $T$-length of $\rho$, i.e.~$|\rho|_T =n+1$ where $n$ is the number of proper initial segments of $\rho$ in $T$.
If $T=2^{<\omega}$, we simply write $|\rho|$.
%The symbols $G, T, U$ denote infinite trees, while   $F$ is reserved for finite trees.
%We often identify $T$ with its range which is a subset of $2^{<\omega}$.
%Note that as a set of strings, $T$ might not be closed under initial segments.

For a set $F\subseteq 2^{<\omega}$, we write $F\uhr_y $ for $\{\rho\in F: |\rho|_F\leq y\}$;
and write $F>y$ if $|\rho|>y$ for all $\rho\in F$.
We use $\leaf(T)$ to denote the set of leaves of $T$ when $T\ne\emptyset$
(i.e.~the set of nodes in $T$ with no proper extension in $T$).
Define $\leaf(\emptyset) = \{\varepsilon\}$ where $\varepsilon$ denotes the  empty string.
A \treeset\ $T$ is \emph{$l$-branching} over $\rho$ if $T\cap [\rho]^\preceq\neq \emptyset$
%YY: Implicitly, we assumed $[rho]^{\preceq}$ refers to $2^{<\omega}$.
and for every $\sigma\in T\cap [\rho]^\preceq$,
there exist at least $l$ pairwise  incompatible immediate extensions $\rho_0,\rho_1,\dots, \rho_{l-1}$ of $\sigma$ in $T$.
$T$ is $l$-branching if $T$ is $l$-branching over $\varepsilon$.
Given \treeset s $F,F'\subseteq 2^{<\omega}$, written $F'\succeq F$ ($F' $ {\em extends} $F$),
if $F\subseteq F'$ and $F'\setminus F\subseteq [\leaf(F)]^\preceq$.
%%YY: Do we assume $F$ is finite and $F'$ extends every leaf of $F$?
A (finite) \emph{perfect} tree is a tree that is isomorphic to $2^{<n}$ for some $n$.
An infinite perfect tree is one that is isomorphic to $2^{<\omega}$.
%$G$ will always denote an infinite perfect tree that is {\it generic} for some notion of forcing.
Further notations will be introduced at places where they are immediately used.

%Throughout the paper, let $j:\omega\rightarrow\omega$ be a computable non-decreasing function.
%Denote by $j^\omega$ the set of infinite sequences $\{a_0, a_1, a_2,\dots \}$ such that $a_n\leq j(n)$ for each $n$,
%and by $j^{<\omega}$ the set of finite initial segments of members of $j^\omega$,
%i.e.~the set of finite sequences whose $n^{\rm th}$-component is less than or equal to $j(n)$.
%Then $j^n$ refers to the set $\{\v a: \v a=\{a_0, a_1, \dots, a_{n-1}\} \in j^{<\omega}\}$.
%%YY: Need to check.

We assume that the reader is familiar with the basic notions of reverse mathematics as presented in Simpson \cite{Simpson2009Subsystems}.
A (standard) model of $\mathsf{RCA}_0$ is denoted $\mathfrak M=(\omega, \mcal{S})$,
where $\mathcal S$ is a subset of the power set of $\omega$  closed under recursive join and Turing reduction.
Combinatorial objects such as trees can be coded as subsets of natural numbers, thus can be viewed as members of $\mathcal S$.

%We design an appropriate combinatorial notion which
%enable us to carry out a Dzhafarov-Jockusch
%style Seetapun forcing.
%Using this method,
%we generalize the results in \cite{}
%by showing that Thin Set Theorem on Trees
%forms a strict hierarchy when the number of colors
%increases.
%We also show that $\tt_k^1$ admit
%generalized $low $ solution.
%We prove that Stable Chain Anti Chain
%on Trees ($\treescac$) admit $low$ solution,
%thus generalizing a classic result of
%\cite{}.

\subsection{The  main result}
We begin by recalling the combinatorial principles about trees and tree colorings that underlie  the subject matter of this paper.
First, a principle weaker than $\mathsf{WKL}$ is the following introduced by Yu and Simpson \cite{Yu1990Measure}:
(following Yu and Simpson, we use $\mathsf{WKL}$ for the combinatorial principle, and use $\mathsf{WKL}_0$ for the subsystem
$\mathsf{RCA}_0+\mathsf{WKL}$ of second order arithmetic.)
\begin{definition}
The principle {\em weak weak K\"onig's lemma} ($\mathsf{WWKL}$) states that for every infinite tree $T\subseteq 2^{<\omega}$,
if there is a rational number $r>0$ such that $|T_s|/2^s>r$ for all $s\in\omega$, where $T_s=\{\sigma : \sigma\in  T\wedge |\sigma|_T = s\}$  and
$|T_s|$ is the cardinality of  $T_s$,  then there is an infinite path in $T$.
\end{definition}
It is known that , $\mathsf{WKL}_0$ is strictly stronger than $\mathsf{RCA}_0+\mathsf{WWKL}$ \cite{Yu1990Measure}.
We produce a model of $\mathsf{RCA}_0+\tt^2_k$ in which $\mathsf{WWKL}$ (hence $\msf{WKL}$) fails. i.e.:

\main*

The model $\mathfrak M$ we construct for Theorem \ref{coro102} will be obtained from solutions of instances of $\tt^2_k$ that satisfy
the property of avoidance of bounded enumeration, defined in Definition \ref{defavoidance}.
To set the stage for the proof, we recall a number of combinatorial principles which are related or refinements of $\tt^2_k$.
As for $\mathsf{RT}^2_k$, one can introduce a notion of stability for coloring of pairs of compatible nodes of a tree.
However, the intrinsic topological structure of a tree entails that there are several possibilities for generalizing  this notion that differ in proof-theoretic strength:

%In particular, we investigate the relative proof-theoretic strengths of two major principles: weak K\"onig's lemma and the tree version %of Ramsey's theorem for pairs (for the case of $\mathsf{RT}^2_2$, the problem was settled in Liu  \cite{Liu2012RT22}).
%The first principle, among  the ``big five'' systems in reverse mathematics, is at the  level just above that of $\mathsf{RCA}_0$  (cf.~\cite{Sim2009}):

%\begin{definition}
%The principle  weak K\"onig's lemma ($\mathsf{WKL}_0$) states that every infinite perfect  tree $T\subseteq 2^{<\omega}$ has an infinite %path, i.e.~an infinite set $X\subset T$ consisting of pairwise compatible strings.
%\end{definition}
%By \cite[Theorem 8.17]{Cholak2001strength}, $\mathsf{WKL}_0$ is equivalent over $\mathsf{RCA}_0$ to the principle that for every $X\subset\omega$, there is a complete extension of Peano arithmetic relative to $X$.
% The other principle in the statement of our main result concerns colorings of trees.
% $\tt^2_k$  is a natural generalization of Ramsey's theorem for pairs in $k$-colors ($\mathsf{RT}^2_k$)  in the setting of trees.
% The main result of this paper is the following:

\begin{definition}
A $k$-coloring $C:[2^{<\omega}]^2\rightarrow \k$ is \emph{\wstable} on an infinite tree $T$  if for every $\sigma\in T$,
there exists an $n$ such that for every $\rho\succeq \sigma$ in $T$ with $|\rho|_T= n$,
there exists a $\k'\in \k$ and $C\big(\{\sigma,\tau\}\big)=\k'$ for every $\tau\in T$ such that $\tau\succ\rho$.

A $k$-coloring $C:[2^{<\omega}]^2\rightarrow \k$ is \emph{stable} if for every $\sigma$,
there exists a $\k'\in \k$ such that for all but finitely many $\rho\succeq \sigma$,
$C\big(\{\sigma,\rho\}\big)=\k'$. Clearly stability implies weak stability in a tree.
\end{definition}

\begin{definition}\label{defstt}
The  stable tree theorem for pairs principle ($\stt^2_k$) states that
every stable $k$-coloring $C$ of the full binary tree admits an infinite perfect subtree  $T$ such that $|C\restriction [T]^2|=1$.
\end{definition}

\begin{definition} \label{deftcoh}
The  principle of weakly cohesive (resp.~cohesive) tree theorem for pairs $\wtcoh$ (resp.~$ \tcoh$) states that
every $k$-coloring $C$ of the full binary tree admits an infinite perfect  subtree $T$ such that $C\restriction [T]^2$
is \wstable\ (resp.~ stable).
\end{definition}

As in \cite{Cholak2001strength} where $\mathsf{RT}^2_k$ was decomposed into the sum of the cohesive principle $\mathsf{COH}$ and the stable Ramsey's theorem  principle $\mathsf{SRT}^2_k$,
one has a corresponding decomposition of $\tt^2_k$:
\begin{proposition} [Dzhafarov, Hirst and Lankins \cite{Dzhafarov2009polarized}] \label{DHLdecomp}
Over $\msf{RCA}_0$, $\msf{TT}_k^2 \leftrightarrow \msf{STT}_k^2+ \tcoh$.
\end{proposition}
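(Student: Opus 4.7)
The plan is to follow the standard $\mathsf{COH}+\mathsf{SRT}^2_k$--style decomposition, adapted to the tree setting. Both directions should go through in $\mathsf{RCA}_0$ with only mild care about coding.

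For the forward direction $\tt^2_k \Rightarrow \stt^2_k + \tcoh$, both conjuncts are essentially trivial. If $C$ is a stable $k$-coloring of $[2^{<\omega}]^2$, then any $\tt^2_k$-solution $T$ (an infinite perfect subtree with $|C\restriction[T]^2|=1$) is already an $\stt^2_k$-solution. If $C$ is an arbitrary $k$-coloring, then any $\tt^2_k$-solution $T$ has $C\restriction[T]^2$ constant, hence trivially stable, so $T$ witnesses $\tcoh$ as well. In both cases the same tree serves as the solution, so no coding work is needed.

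For the harder direction $\stt^2_k + \tcoh \Rightarrow \tt^2_k$, let $C:[2^{<\omega}]^2\rightarrow k$ be arbitrary and proceed in two stages. First, apply $\tcoh$ to $C$ to obtain an infinite perfect subtree $T'\subseteq 2^{<\omega}$ on which $C$ is stable, i.e.\ for each $\sigma\in T'$ the value $C(\{\sigma,\rho\})$ is eventually constant in $\rho\succeq\sigma$ inside $T'$. Since $T'$ is perfect, there is a canonical order- and $\preceq$-preserving bijection $\phi:2^{<\omega}\rightarrow T'$ (sending $\varepsilon$ to the stem of $T'$, and extending by mapping $\sigma0,\sigma1$ to the two immediate $\preceq$-successors of $\phi(\sigma)$ inside $T'$, chosen by lexicographic order). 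This $\phi$ is recursive in $T'$, hence available in any $\omega$-model or $\mathsf{RCA}_0$-structure containing $T'$. Define the pullback coloring $C'(\{\sigma,\tau\}) := C(\{\phi(\sigma),\phi(\tau)\})$ for compatible $\sigma,\tau\in 2^{<\omega}$; then $C'$ is a stable $k$-coloring on $[2^{<\omega}]^2$.

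In the second stage, apply $\stt^2_k$ to $C'$ to obtain an infinite perfect subtree $S\subseteq 2^{<\omega}$ with $|C'\restriction[S]^2|=1$. The image $\phi(S)\subseteq T'\subseteq 2^{<\omega}$ is again an infinite perfect subtree (since $\phi$ preserves $\preceq$ and incompatibility), and by construction $|C\restriction[\phi(S)]^2|=1$, so $\phi(S)$ is the desired $\tt^2_k$-solution for $C$. The only potential obstacle is to check that everything is carried out within $\mathsf{RCA}_0$, but since $\phi$, $C'$, and $\phi(S)$ are all $\Delta^0_1$-definable from the parameters $C$, $T'$, and $S$, recursive comprehension suffices. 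The statement and proof are direct tree analogues of the $\mathsf{RT}^2_k\leftrightarrow\mathsf{SRT}^2_k+\mathsf{COH}$ decomposition in \cite{Cholak2001strength}, so I expect no genuine obstacle beyond formalizing the encoding of the isomorphism $\phi$.
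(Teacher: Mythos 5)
Your argument is correct: both directions are the tree analogues of the $\mathsf{RT}^2_k \leftrightarrow \mathsf{SRT}^2_k + \mathsf{COH}$ decomposition of \cite{Cholak2001strength}, with the pullback/pushforward along the canonical level-preserving isomorphism $\phi\colon 2^{<\omega}\to T'$ being $\Delta^0_1$ in $T'$ (each node of a perfect $T'$ has exactly two immediate $T'$-successors, recoverable by a bounded search once two incompatible extensions are located), so everything lives inside $\mathsf{RCA}_0$. The paper states Proposition~\ref{DHLdecomp} by citation to Dzhafarov, Hirst and Lakins rather than proving it, so there is no in-text proof to compare against; yours is the expected standard argument.
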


Each of the principles defined above may be expressed as a $\Pi^1_2$-sentence $\theta$ of the form $\forall X\exists Y\varphi(X,Y)$ where $\varphi$ is arithmetical.
We will call $\theta$ a {\em problem} $\mathsf{P}$, $X$ an {\em instance} of $\mathsf{P}$ and $Y$ a {\em $\msf{P}$-solution} of $X$.
In the case of $\tt^2_k$, the combinatorial principle itself is a problem, each coloring $C: [2^{<\omega}]^2\rightarrow k$ is an instance of the problem,
and an infinite perfect \treeset\ $T$ with $|C([Y]^2)|=1$ is a solution of  the problem for the instance $C$.

\begin{remark}
It is worth noting, however, that while $\stt^2_k$ may be considered a natural generalization of $\mathsf{SRT}^2_k$,
there is little resemblance between $\mathsf{COH}$, which is defined based on the notion of an array, and $\mathsf{CTT}^2_k$.
In fact, there is a difference in terms of extendability between the two:
every finite set of numbers extends to a solution of a given instance of $\msf{COH}$ while this is not true for $\tcoh$ for $k\ge 2$,
in that not every finite perfect tree is extendible to an infinite perfect tree that solves a given instance of $\tcoh$.
In this respect, $\wtcoh$ resembles $\msf{COH}$ the most. In \cite{DzhafarovColoring} a bushy tree forcing method was employed to prove the cone avoidance property for $\tcoh$.
While this leads to the proof that $\tt^2_k\not\rightarrow \mathsf{ACA}_0$ over $\mathsf{RCA}_0$,
it does not appear to be sufficient for proving $\mathsf{RCA}_0+\tcoh\not\rightarrow\msf{WKL}_0$ since there exists a $\Pi_1^0$-class $Q$ of bushy
trees (of appropriate width) in which every infinite perfect subtree of a $T\in Q$ is of \PA-degree. Finally, it can be shown that for $k\ge 2$, $\mathsf{RCA}_0+\mathsf{CTT}^2_k$ implies the $\Sigma^0_2$-bounding induction scheme, while it is not the case for $\mathsf{COH}$.
\end{remark}

%The proof of Theorem \ref{coro102} is by proving that
%$\tt_k^2$ admit avoidance of bounded enumeration (Definition \ref{defavoidance}).
%By the CJS-decomposition of $\tt_k^2$,
%it suffices to prove that $\tcoh$ and $\msf{STT}_k^2$ admit
%avoidance of bounded enumeration.
%In  section \ref{tt2wklsec3} we  provide  a  CJS-decomposition of
%$\tcoh$ into $\msf{wCTT}_k^2$ and $k$-\treesplit\
%(see Definition \ref{k-tree-split}
%and Proposition \ref{prop2}).
%We show that $\msf{wCTT}_k^2$ and $k$-\treesplit\
%admit avoidance of bounded enumeration, the first step
 %towards establishing
 %$\tt_2^2\not\rightarrow\msf{WKL}_0$.
 %In section \ref{tt22vswklsec4} we prove that
 %$\stt_k^2$ admits avoidance of bounded enumeration by showing
%that $\tt_k^1$ admits strong avoidance of bounded enumeration (see
 %Definition \ref{defavoidance} and Theorem \ref{th101}).

\section{Cohesive Trees} \label{tt2wklsec3}

The main result of this section (Corollary \ref{CTTandWWKL}) is that the $\tcoh$ principle does not imply $\msf{WWKL}$ over $\msf{RCA}_0$.
We decompose $\msf{CTT}^2_k$ into $\wtcoh$ and a principle called the $k$-\treesplit\ principle $k$-$\mathsf{TSP}$  (Definition \ref{tree-split-principle})
whose proof-theoretic strength illustrates   the gap between $\wtcoh$ and $\tcoh$.
In fact this gap can be filled by solution sets which are low relative to   instances of $\wtcoh$:
For any model $\mathfrak{M}_1=(\omega, \mcal{S}_1)$ of $\wtcoh$,
there is a model $\mathfrak{M}_2=(\omega, \mcal{S}_2)$ of $\tcoh$ where $\mcal{S}_2$ is obtained from $\mcal{S}_1$ by adding only sets
that are low relative to members of $\mcal{S}_1$ (Theorem \ref{th3}).
We show that $\wtcoh$ and  $k$-$\mathsf{TSP}$  both admit avoidance of bounded enumeration (Proposition \ref{prop4} and Theorem \ref{th3}).
Combining these results, one concludes that $\tcoh$ admits avoidance of bounded enumeration as well (Corollary \ref{coro103}) and hence does not prove $\msf{WWKL}$.
The decomposition of $\tcoh$ may be viewed as an analog of the decomposition of $\msf{RT}^2_2$ into the cohesive set principle $\msf{COH}$ and the stable  2-coloring principle $\msf{SRT}^2_2$ given in \cite{Cholak2001strength}.
This is discussed in Remark \ref{decomposition}.

\subsection{Enumeration avoidance property of $\wtcoh$}

We follow the approach in Liu \cite{Liu2015Cone},  where  the enumeration avoidance strategy   was used to show that $\mathsf{RT}^2_2$ does not imply $\mathsf{WWKL}$.
We begin with recalling some basic definitions (note that for the purpose of defeating $\mathsf{WWKL}$, avoiding $1$-enumeration is sufficient).

\begin{definition} \label{boundenumeration}
Given a set $S\subseteq 2^{<\omega}$, an $l$-\emph{enumeration} of $S$ is a function $g:\omega\rightarrow \mathsf{Fin}(2^{<\omega})$ such that
$|g(n)|\leq l$ and $g(n)\cap S\cap 2^n\ne\emptyset$ for all $n$.
A \emph{bounded enumeration} of $S$ is an $l$-enumeration of $S$ for some $l\in\omega$.
Given $D\subset\omega$, we say that $S$ admits a $D$-computable $l$-enumeration (resp.~bounded enumeration)
if there is a $D$-computable function that is an $l$-enumeration (resp.~bounded enumeration) of $S$.
\end{definition}

\begin{definition}\label{defavoidance}
A problem $\msf{P}$ admits \emph{avoidance of $l$-enumeration} (resp.~\emph{bounded enumeration}) if for any $D\subset\omega$,
any $S\subseteq 2^{<\omega}$ that does not admit a $D$-computable $l$-enumeration (resp.~bounded enumeration),
and any $D$-computable instance of $\msf P$, there exists a solution $T$ of the instance such that $D\oplus T$ does not compute an
$l$-enumeration (resp.~bounded enumeration) of $S$.

$\msf{P}$ admits \emph{strong avoidance of $l$-enumeration} (resp.~\emph{bounded enumeration}) if for any $D$,
any $S\subseteq 2^{<\omega}$ that does not admit a $D$-computable $l$-enumeration (resp.~bounded enumeration),
and any $\msf{P}$-instance (not necessarily $D$-computable), there exists a solution $T$ of the instance such that $D\oplus T$ does not compute an
$l$-enumeration (resp.~bounded enumeration) of $S$.
\end{definition}

In the following proposition and in Corollary \ref{coro103}, the hypothesis in the ``moreover'' part is redundant   in view of Corollary \ref{strong} to be proved later.

\begin{proposition} \label{prop4}
For each $k$, $\wtcoh$ admits avoidance of $1$-enumeration and bounded enumeration.
Moreover, if $\tt_{k'}^1$ admits strong avoidance of bounded enumeration for all $k'\in\omega$, then so does $\wtcoh$.
%Every computable coloring $C:[2^{<\omega}]^2\rightarrow r$
%admit a $\wwtcoh$ solution $T$ that is $low_2$.
\end{proposition}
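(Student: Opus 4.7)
The plan is to produce a $\wtcoh$-solution $T$ by a forcing argument. Conditions are pairs $(F, U)$ where $F$ is a finite perfect subtree of $2^{<\omega}$ whose leaves lie at a common level, $U$ is an infinite perfect subtree of $2^{<\omega}$ with $\leaf(F) \subseteq U$, and (for avoidance) $U$ is $D$-computable; in the strong avoidance case one relaxes the latter and only requires that $S$ admits no $(D \oplus U)$-computable bounded enumeration. An extension $(F', U') \succeq (F, U)$ means $F \subseteq F'$ with $F' \setminus F \subseteq [\leaf(F)]^\preceq$, $F'$ is perfect, $U' \subseteq U$, and $\leaf(F') \subseteq U'$. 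The generic filter produces $T = \bigcup_s F_s$, which will be our candidate solution.

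Two families of dense requirements must be met. First, \emph{weak stability}: for each $\sigma$ that enters the generic, one must force $c_\sigma(\tau) := C(\{\sigma, \tau\})$ to become eventually weakly stable above $\sigma$ in $T$. Given $(F, U)$, iteratively apply $\tt^1_k$ to the colorings $c_\sigma$ for $\sigma \in F$: starting with $U_0 = U$, apply $\tt^1_k$ to $c_{\sigma_i} \uhr U_{i-1}$ to obtain $U_i \subseteq U_{i-1}$ on which $c_{\sigma_i}$ is constant. After $|F|$ iterations one has $U^* \subseteq U$ on which every $c_\sigma$ ($\sigma \in F$) is constant; taking $F'$ to be $F$ extended by the next full level inside $U^*$ and $U' = U^*$ gives the required extension (it actually realizes full stability, which is stronger than needed). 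In the strong avoidance case, each application of $\tt^1_k$ is invoked through the strong avoidance hypothesis for $\tt^1_{k'}$, which guarantees that each $U_i$ preserves the property that $S$ admits no $(D \oplus U_i)$-computable bounded enumeration.

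Second, \emph{enumeration avoidance}: for each Turing functional $\Phi$ one must force $\Phi^{D \oplus G}$ to fail to be an $l$-enumeration (resp.\ bounded enumeration) of $S$. Suppose no extension of $(F, U)$ does this. Then for every $n$ there exists a finite perfect extension $F^* \succeq F$ within $U$ such that $\Phi^{D \oplus F^*}(n) \downarrow$ is of cardinality at most $l$ and meets $S \cap 2^n$. Searching for such $F^*$ uniformly in $n$ inside $U$ yields a $D$-computable (or $(D \oplus U)$-computable, in the strong case) $l$-enumeration of $S$, contradicting the hypothesis on $S$. Hence one can pick some $n$ and refine $U$ to a perfect subtree $U'$ so that the pair $(F, U')$ forces the desired failure.

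The main technical obstacle is that the reservoir refinements needed for weak stability ($\tt^1$-applications) and those needed for enumeration avoidance (excluding nodes whose extensions would give bad $\Phi$-values) must be combined in a way that still leaves an infinite perfect subtree at each step. In the strong avoidance version, the hypothesis on $\tt^1_{k'}$ is invoked precisely to ensure that the weak-stability $\tt^1$-refinements do not destroy the bounded-enumeration-avoidance property of the reservoir, so that the enumeration-avoidance step remains available at every condition in the generic sequence.
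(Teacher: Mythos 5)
Your overall scheme matches the paper's: Mathias-type forcing with finite perfect tree conditions, meeting requirement $\mcal R_e$ by observing that a failure to force it would yield a $D$- (or $(D\oplus\text{reservoir})$-) computable bounded enumeration of $S$, and, for the ``moreover'' clause, invoking strong avoidance of $\tt^1_{k'}$ at the stability-refinement step so that the refined reservoir stays enumeration-avoiding and the $\mcal R_e$-step remains available at the next stage.

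There is a genuine gap in your weak-stability step, though. You apply $\tt^1_k$ to $c_\sigma$ on $U_{i-1}$ to obtain $U_i$ ``on which $c_\sigma$ is constant'', iterating over every $\sigma\in F$, and you assert this ``realizes full stability''. Neither can be right. If $\sigma$ is an internal node of $F$ (in particular the root), then $U_{i-1}\cap[\sigma]^\preceq$ spans the cones above several leaves of $F$, and a $\tt^1_k$-homogeneous perfect subtree for $c_\sigma$ will in general lie inside a single one of those cones, so it drops the remaining leaves of $F$ and $(F,U_i)$ is no longer a condition. More to the point, full stability is not forcible at all: the limit color of $C(\sigma,\cdot)$ along two incompatible cones above $\sigma$ may genuinely differ, and no refinement of the reservoir that still contains all of $\leaf(F)$ can make $c_\sigma$ globally constant — that is precisely the gap between stability and weak stability. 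The correct move (as in the paper) is to refine cone by cone: for each $\sigma\in\leaf(F)$, apply $\tt^1$ inside the cone $U\cap[\sigma]^\preceq$ to the product coloring $\tau\mapsto(C(\sigma',\tau))_{\sigma'\preceq\sigma}$ (a $k^{|\sigma|}$-coloring, which is why the hypothesis has to range over $\tt^1_{k'}$ for all $k'$), getting a perfect $V_\sigma\subseteq U\cap[\sigma]^\preceq$ on which that coloring is constant, with a constant value that is allowed to vary with $\sigma$. The reservoir is then $\bigcup_{\sigma\in\leaf(F)}V_\sigma$, a finite union of perfect trees, one over each leaf, rather than a single perfect tree. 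Once this correction is made (and the enumeration-avoidance step is phrased as \emph{extending $F$} rather than only refining $U$ — the $\Sigma^0_1$-outcome of $\mcal R_e$ requires moving the finite tree, not the reservoir), your argument coincides with the paper's.
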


\begin{proof}
We prove the proposition  for $1$-enumeration.  The proof for bounded enumeration is similar.
The idea is similar to that of  the proof of $\msf{COH}\not\rightarrow \msf{WKL}$ using  Mathias forcing.

Fix an $S\subseteq 2^{<\omega}$ that does not admit a $1$-enumeration computable in $D\subseteq\omega$.
We may assume that $D=\emptyset$ as the argument below relativizes to any set $D$ that does not compute a $1$-enumeration of $S$.
Let $C: [2^{<\omega}]^2\rightarrow k$ be a computable coloring that does not compute a $1$-enumeration of $S$.
We build an infinite perfect subtree $G$ weakly stable for $C$  satisfying the following requirements:
\begin{itemize}
\item $R_e$: For some $m$, either $\Psi_e^G(m)\downarrow\notin (S\cap 2^m)$ or $\Psi_e^G(m)\uparrow$.
\end{itemize}
Define a sequence $(F_e, X_e)_{e\in \omega}$, where $F_e$ is a finite perfect tree,
and $X_e= \bigcup \{U_{\sigma}:\sigma \in \leaf (F_e)\}$, where each $U_\sigma$ is infinite and perfect.
The generic object $G$ will be $\bigcup_e F_e$.  The construction is carried out  recursively in $\emptyset'\oplus S$.

Let $F_0$ be the root $\varepsilon$ and $X_0=2^{<\omega}$.  Suppose that $(F_e,X_e)$ is defined.
Let $F_{e+1}$ be the (canonically  least) finite perfect tree $F\succ F_e$ such that $(F\setminus F_e)\subset X_e$ and
for some $m$ either $\Psi_e^{F}(m)\downarrow\notin S\cap 2^m$ or $\Psi_e^{F'}(m)\uparrow$ for all perfect and finite $F'\succ F$ such that $(F'\setminus F) \subset X_e$.
Such an $m$ exists and can be computed from  $\emptyset'\oplus S$, since otherwise $\Psi_e$ with the recursive oracle $X_e$ will be a $1$-enumeration of $S$.
{\em This ensures that $R_e$ is satisfied}.
To ensure that $\bigcup_e F_e$ is infinite,
observe that for each $e$ there is a $\Psi$ such that $\Psi^F(m)\downarrow=0^m$ if and only if $F$ contains a subtree isomorphic to $2^{e+1}$.
Thus satisfying $R_e$ for all $e$  ensures that $G$ is infinite.

Next,  for each $\sigma\in \leaf(F_{e+1})$, let $U_{\sigma}$ be an infinite recursive  perfect tree extending $\sigma$ such that
$U_{\sigma}\subset X_e$ and for all $\tau_1,\tau_2\in U_{\sigma}$ and for all $\sigma_1\preceq \sigma$, $C(\sigma_1,\tau_1)=C(\sigma_1,\tau_2)$.
The existence of such a $U_{\sigma}$ follows from a (repeated application of) a standard argument regarding density of colors in a perfect tree.
Let $X_{e+1}=\bigcup \{U_{\sigma}: \sigma\in \leaf(F_{e+1})\}$. $X_{e+1}$ is recursive because the coloring $C$ is.
$X_{e+1}$ ensures the weak stability of $G$ up to $F_{e+1}$.
(Note that $\Sigma^0_3$-induction is sufficient to carry out the construction.
We do not know which subsystem of second order arithmetic  is the weakest required).

The proof of the ``moreover" part requires an  additional step.
Suppose that we have obtained $(F_e,X_e)$ where $F_e$ and $X_e$ are as above.
List the leaves of $F_{e}$ from left to right as $\sigma_0,\dots,\sigma_n$ for some $n$.
We will thin the subtree of $X_e$ above each $\sigma_i$, $i\le n$,   cone by cone  so that  the join of the thinned subtrees does not compute any $1$-enumeration of $S$.
Let $V_{-1}=\emptyset$.  Suppose that we have obtained $V_j$ ($-1\leq j<i$) for some $0\leq i\leq n$
such that (1) $V_j\subset X_e$ is an infinite perfect tree extending $\sigma_j$; (2) for all $\sigma'\prec\sigma_j$, $\tau_1,\tau_2\in V_j$,
$C(\sigma',\tau_1)=C(\sigma',\tau_2)$; and (3) $\bigoplus_{-1\leq j<i}V_j$ does not compute any $1$-enumeration of $S$.
Let $|\sigma_i|=k'$ and $U_i=X_e\cap [\sigma_i]^{\preceq}$. Then $C(\sigma',\tau)$, for $\sigma'\preceq \sigma_i$ and $\tau\in U_i$, induces
a $2^{k'}$ coloring on $U_i$.  By the strong avoidance of $\tt^1_{2^{k'}}$,
there is a homogenous solution $V_i\subseteq U_i$ such that $ F_e \bigoplus_{-1\leq j\leq i}V_j$ does not compute a $1$-enumeration of $S$.
Let $Y=\bigoplus_{1\leq i\leq n}V_i$.  Since $\Psi_e^{ F_e\oplus Y}$ does not compute a $1$-enumeration of $S$, there exists an $m$ such that
either for some perfect finite tree $F$, $(F\setminus F_e)\subset Y$ and $\Psi_e^{ F}(m)\downarrow \notin (S\cap 2^m)$,
or for all perfect finite $F\subset Y$, $\Psi^{ F_e\oplus F}(m)\uparrow$.  In the former case, we let $F_{e+1}$ be (the canonical least) such $F$, while
in the latter case we add a split (taken from $Y$) to each of the leaves of $F_e$ to obtain $F_{e+1}$.
In both cases, let $X_{e+1}=\bigcup \{Y\cap [\sigma]^{\preceq}: \sigma\in \leaf(F_{e+1})\}$. Then $(F_{e+1},X_{e+1})$ ensures the satisfaction of $R_e$.

Let $G=\bigcup_{e\in \omega} F_e$. By construction, $G$ is a $\wtcoh$ solution of $C$ and does not compute a $1$-enumeration of $S$. This completes the proof.
\end{proof}

\subsection{Between $\wtcoh$ and $\tcoh$} \label{sectcoh}

We now consider   $\tcoh$.  We first introduce a new combinatorial principle called the $k$-\treesplit\ principle that  serves to fill  the gap between $\wtcoh$ and $\tcoh$.
%For each string $\rho$ and finite set $B = \{b_0, b_1, \dots, b_n\}\subseteq \omega$ (with members listed in increasing order) where $b_n\le |\rho|-1$,
For a set $B$ of strings, let $B\uhr n = \{\xi\uhr n: \xi \in B\big\}$.

\begin{definition} \label{k-tree-split}
Let $k>0$.  A $k$-\emph{\treesplit\ } on a \treeset\ $T$ is a function $f:T\rightarrow \mathsf{Fin}(k^{<\omega})$
such that  for each $\rho\in T$, if $|\rho|_T=n$ then
\begin{enumerate}
\item  [(i)]  $f(\rho)$ is a nonempty subset of $k^{n}$;

\item [(ii)]
If $\rho$ is not a leaf of $T$, then $
f(\rho)=\bigcup\{f(\rho^+)\uhr n: \mbox{$\rho^+$ is an immediate successor of $\rho$ in $T$}\}.
$
\end{enumerate}
\end{definition}

Item (ii) gives the motivation for   the term  ``\treesplit'': when one moves from $\rho$ to its immediate successors $\rho^+$,
 the set $f(\rho)$ is split  into several (possibly overlapping) subsets, namely the sets $f(\rho^+)\uhr n$.
As an illustrative  example, let $T$ be an infinite perfect tree and let $C:[T]^{2}\to k$ be a $k$-coloring.
For a node $\rho\in T$ with $T$-length $n$, define
\[
f(\rho)=\{\zeta\in k^n:\text{\ there are infinitely many $\tau\succeq \rho$ such that $\zeta=\langle C(\rho\uhr i, \tau)\rangle_{ i<n}$}\}.
\]
It is easy to verify (in any model satisfying  $\Sigma^0_2$-bounding) that $f$ is a $k$-\treesplit\ on $T$.
In our application, $T$ takes the form $[B]^\preceq$ where $B$ is a finite antichain.
In this paper, we assume that every $k$-\treesplit\ is defined on a
nonempty \treeset\ with no leaf.

The following basic properties of a $k$-tree-split  are easy consequences of Definition \ref{k-tree-split}:
\begin{lemma} \label{lem11}
Let $f$ be a $k$-\treesplit\ on $T$ and let $\rho\in T$ be of $T$-length $n$.  Then
\begin{enumerate}
\item [{\rm (i)}] For any  $\rho\preceq \tau$ in $T$, $f(\tau)\uhr n\subseteq f(\rho)$;

\item [{\rm (ii)}]  If $\zeta\not\in f(\rho)$ then for any $\tau\in T$ with $\rho\preceq \tau$, $\zeta\notin f(\tau)\uhr n$.
\end{enumerate}
\end{lemma}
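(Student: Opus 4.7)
The plan is to prove both items by induction on the $T$-length difference $|\tau|_T - |\rho|_T$, using Definition \ref{k-tree-split}(ii) as the one-step case. Item (ii) will then fall out as the contrapositive of item (i).

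For item (i), I would argue as follows. Fix $\rho\in T$ with $|\rho|_T=n$ and prove by induction on $m\ge n$ the statement: for every $\tau\in T$ with $\rho\preceq \tau$ and $|\tau|_T=m$, one has $f(\tau)\uhr n\subseteq f(\rho)$. The base case $m=n$ forces $\tau=\rho$ (since there is at most one node of a given $T$-length on the branch through $\rho$), and $f(\rho)\uhr n=f(\rho)\subseteq f(\rho)$ by clause (i) of the definition. For the inductive step, let $|\tau|_T=m+1$ and let $\sigma\in T$ be the predecessor of $\tau$ in $T$ with $\rho\preceq \sigma$, so $|\sigma|_T=m$. By Definition \ref{k-tree-split}(ii) applied at $\sigma$, we have $f(\tau)\uhr m\subseteq f(\sigma)$; restricting further to length $n\le m$ gives $f(\tau)\uhr n=(f(\tau)\uhr m)\uhr n\subseteq f(\sigma)\uhr n$. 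By the induction hypothesis applied to $\sigma$, $f(\sigma)\uhr n\subseteq f(\rho)$, and the inclusion $f(\tau)\uhr n\subseteq f(\rho)$ follows by transitivity.

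For item (ii), I would simply observe that it is the contrapositive of item (i). If $\zeta\in f(\tau)\uhr n$ for some $\tau\succeq \rho$ in $T$, then by (i) $\zeta\in f(\rho)$, contradicting the hypothesis $\zeta\notin f(\rho)$.

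There is no serious obstacle here: the lemma is a direct consequence of the recursive clause (ii) in the definition of a $k$-tree-split, and the only thing one has to be careful about is the bookkeeping of $T$-lengths when passing from $\tau$ down through intermediate nodes to $\rho$. In particular, the argument only uses that along the unique path from $\rho$ to $\tau$ in $T$ the $T$-length increases by exactly one at each step, which is built into the definition of $|\cdot|_T$.
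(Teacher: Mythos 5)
Your proof is correct and follows essentially the same route as the paper: the paper proves (i) by induction on the set $\{i : |\rho|_T \le i \le |\tau|_T\}$, which is exactly your induction on $T$-length with clause (ii) of Definition \ref{k-tree-split} supplying the one-step inclusion, and the paper likewise treats (ii) as a re-statement (contrapositive) of (i).
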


\begin{proof}
(i)  follows from  performing induction on the  set $\{i: |\rho|_T\le i\le |\tau|_T\}$, while (ii) is essentially a re-statement of (i).
\end{proof}

\begin{definition}\label{induced}
Given trees $\h T\subseteq T$ and $k$-tree-splits $\h f, f$ on $\h T$ and $ T$ respectively,
we say that $\h{f}$ is {\it induced} by
$f$ on $\h{T}$  if:
For any $\rho\in \h{T}$ and  $\h{\xi}\in \h{f}(\rho)$,
there exists a $\xi\in f(\rho)$
such that for each $\sigma\in \h{T}$
with $\sigma\preceq \rho$,
$\h{\xi}(|\sigma|_{\h{T}}) =
\xi(|\sigma|_T)$.
In this  case we say that $\h{\xi}$ is induced by
$\xi$ on $\h{T}$.
\end{definition}

\begin{definition} \label{homog_split}
Given a $k$-\treesplit\ $f$ on $T$, a \treeset\ $G\subseteq T$ is  \emph{homogeneous} for $f$
if there exists a $k$-\treesplit\ $f_{G}$ on $G$ induced by $f$ such that
for each $\rho\in G$, $f_{G}(\rho)$ is a singleton.

\end{definition}
 %If we identify $f_G$ as a function from $G$ to $k^{<\omega}$, then it is
 %a length preserving homomorphism.
%Informally, a node $\rho$ in $G$ can be viewed as a ``substring'' of the same $\rho$ in $T$.
%Condition (ii) says that $\zeta=f_G(\rho)$ is the string that ``corresponds to''  some $\xi\in f(\rho)$.
  Thus $f_G$ is a single-valued function on $G$ preserving length and extension, i.e.~$|\sigma|_G=|f_G(\sigma)|$  for $\sigma\in G$, and if $ \sigma\prec\tau$ in $G$,  then $f_G(\sigma)\prec f_G(\tau)$. We call $f_G$ a {\it homomorphism} for short.
  We also say in this case that $f_G$ witnesses $G$ to be homogeneous for $f$.
For a $k$-\treesplit\ $f$ on $T$, a homogenous tree $G$ is a subtree of $T$ such that $f_{G}(\rho)$ is a color vector $\zeta$ with the property that for any  $\tau\succeq \rho$ in $G$,
$\zeta\preceq f_G(\tau)$.  Since $f_G$ is a \treesplit , for any $\rho,\tau\in G$, if $\rho\preceq \tau$ then $f_G(\rho)\preceq f_G(\tau)$.
Note that
given a $k$-tree-split $f$ on a tree $T$, it makes sense to say that a finite $F\subset T$ is  homogeneous for $f$, namely
there is a function $f_F$ induced by $f$ and defined on $F$, such that
for each $\rho\in F$, $f(\rho)$ is a singleton.

%\begin{lemma}
%If $\h{f}$ is \induced\ by $f$ on $\h{T}$ and $G$ is homogeneous for
%$\h{f}$, then $G$ is homogeneous for $f$.
%\end{lemma}

\begin{lemma} \label{lem3.9vii}
Let  $T$ be an infinite  tree without leaves.
\begin{enumerate}[(i)]
\item [{\rm (i)}] The collection $Q=\{f: f$ is a $k$-\treesplit\ on $T\}$ is a $\Pi^{0,T}_1$-class.
\item [{\rm (ii)}] Let $f$  be a $k$-tree-split on $T$. Whether a finite \treeset\ $F$ is homogeneous for $f$ is decidable uniformly in $f,F,T$.

\end{enumerate}
\end{lemma}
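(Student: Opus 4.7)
For part (i), my plan is to present $Q$ as the intersection of a $T$-computable family of clopen conditions on a computably bounded product space. Encode a candidate $f$ coordinate-by-coordinate: for each $\rho\in T$ the value $f(\rho)$ is constrained to lie in the finite set of nonempty subsets of $k^{|\rho|_T}$, whose canonical indexing is $T$-computable from $\rho$, so the ambient space is a computably bounded product $\prod_{\rho\in T}\bigl(\mcal{P}(k^{|\rho|_T})\setminus\{\emptyset\}\bigr)$. The only remaining requirement is the splitting equation
\[
f(\rho)=\bigcup\{f(\rho^+)\uhr |\rho|_T : \rho^+ \text{ is an immediate } T\text{-successor of } \rho\}
\]
imposed at every non-leaf $\rho\in T$; under the hypothesis that $T$ has no leaves, this applies uniformly to every $\rho$. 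Because $T\subseteq 2^{<\omega}$, every node has at most two immediate $T$-successors, both enumerable from $T$, so each such equation is a clopen condition on at most three coordinates and is $T$-decidable. Conjoining over all $\rho\in T$ then presents $Q$ as a $\Pi^{0,T}_1$-class.

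For part (ii), the plan is to reduce the question to a finite search. The remark preceding the lemma tells us that $F$ is homogeneous for $f$ precisely when there exists a function $g\colon F\to k^{<\omega}$ with $g(\rho)\in k^{|\rho|_F}$ for every $\rho\in F$ (playing the role of the unique element of the singleton $f_F(\rho)$) which simultaneously satisfies the split condition $g(\rho)=g(\rho^+)\uhr |\rho|_F$ for every non-leaf $\rho\in F$ and every immediate $F$-successor $\rho^+$ of $\rho$, together with the induced-by-$f$ condition stating that for each $\rho\in F$ there is some $\xi\in f(\rho)$ with $g(\rho)(|\sigma|_F)=\xi(|\sigma|_T)$ for all $\sigma\in F$ with $\sigma\preceq \rho$. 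The space of candidate $g$'s is the finite product $\prod_{\rho\in F}k^{|\rho|_F}$, whose size is computable from $F$; each candidate can be generated uniformly, and for each one the two conditions unwind into finitely many equality tests whose outcomes are readable from $T$, $F$, and $f$. Enumerating this candidate space and disjoining the results yields a decision procedure uniform in $(T, F, f)$.

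I do not anticipate any substantive obstacle here. The one piece of bookkeeping worth flagging is that the splitting equation in part (i) really involves only finitely many coordinates, which crucially uses that the ambient tree is a subtree of $2^{<\omega}$ so that each node has at most two immediate successors; the same observation also ensures that the number of candidate functions in part (ii) is a computable function of $F$, making the decision procedure there genuinely finite.
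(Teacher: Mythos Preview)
Your proposal is correct and follows essentially the same approach as the paper: represent a $k$-tree-split by its values on successive $T$-levels and observe that the defining conditions are local and $T$-decidable, so that the class of tree-splits is $\Pi^{0,T}_1$; the paper phrases this as building a $T$-recursive, recursively bounded tree $U$ of finite approximations, which is the same idea in different packaging. One small inaccuracy worth flagging: a node of an arbitrary $T\subseteq 2^{<\omega}$ can have more than two immediate $T$-successors (since $T$ need not be closed under initial segments), so your ``at most three coordinates'' remark is not literally true in general---but all you actually need is that the set of immediate $T$-successors of each node is finite and $T$-computable, which holds for the trees $[B]^\preceq$ used in all the applications and is equally implicit in the paper's own proof (where finiteness of $T_n$ is assumed without comment). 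Your treatment of part~(ii) is a correct unpacking of what the paper simply calls ``immediate.''
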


\begin{proof}
Let $T_n=\{\rho: \rho\in T\wedge |\rho|_T\le n\}$.
Let $U_n$ be the set of (codes of) finite functions from  $T_n$  to $\mathsf{Fin}(k^{<\omega})$ satisfying conditions (i) and (ii) in Definition \ref{k-tree-split} (restricted to $T_n$). Let $U=\bigcup_n U_n$, and
order its members  by  (functional) extension.
Then   $U$ with the ordering relation   is a finite branching recursively bounded  recursive tree and the set of all infinite paths in $U$ is exactly $Q$.

The proof of (ii) is immediate.
\end{proof}

\begin{definition} \label{wstable-tree-split}
A $k$-\treesplit\ $f$ on $T$ is \emph{weakly stable}
%%YY: I change this to weakly stable
if
\[
(\forall \rho\in T)(\exists d)(\forall \rho'\in T\cap [\rho]^\preceq)[|\rho'|_T\geq d\rightarrow (\exists\zeta\in f(\rho))(f(\rho')\uhr |\rho|_T=\{\zeta\})].
\]
\end{definition}
Notice that $f(\rho')\uhr |\rho|_T=\{\zeta\}$ says that $\zeta$ is the common initial segment of every member of $f(\rho')$.
By Lemma \ref{lem11}, $\zeta$ is also  the common initial segment of $f(\tau)$ for every $\tau$ extending $\rho'$.
Thus, above level $d$, the tree $T\cap [\rho]^{\preceq}$ is partitioned into disjoint cones each of which ``shares'' a single initial segment $\zeta\in f(\rho)$
in the above sense, though different cones may share a different $\zeta$.
The following proposition  gives the  intuition behind  the notion of a $k$-\treesplit.

\begin{proposition} \label{prop2}
For any \wstable\ coloring $C:[2^{<\omega}]^2\rightarrow k$, there exists a weakly stable
$k$-\treesplit\ $f_C:2^{<\omega}\rightarrow \mathsf{Fin}(k^{<\omega})$ with $f_C\leq_T C'$ such that
$C\restriction G$ is stable for any infinite perfect tree  $G$ that is homogeneous for $f_C$.
\end{proposition}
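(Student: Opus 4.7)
The plan is to exhibit $f_C$ explicitly via the example already sketched in the paragraph preceding Lemma \ref{lem11}. For $\rho\in 2^{<\omega}$ of length $n$, set
\[
f_C(\rho)=\bigl\{\zeta\in k^{n}:\ \exists^\infty \tau\succeq \rho\text{ with }\zeta=\langle C(\rho\uhr i,\tau)\rangle_{i<n}\bigr\}.
\]
The predicate ``$\exists^\infty \tau$'' is $\Pi^0_2$ relative to $C$, so $f_C\le_T C'$. That $f_C$ satisfies condition~(i) of Definition~\ref{k-tree-split} is immediate (by pigeonhole there is always at least one such $\zeta$), and condition~(ii) follows from partitioning the extensions of $\rho$ into those through $\rho0$ and those through $\rho1$: a color sequence appears infinitely often above $\rho$ iff it appears infinitely often above one of its immediate successors, and truncation of the latter yields the former.

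Next I would verify that $f_C$ is \wstable. Fix $\rho$ of length $n$. Because $C$ is \wstable, for every $i<n$ the string $\rho\uhr i$ has an associated level $n_i$ such that for every $\rho'\succeq\rho\uhr i$ of length $n_i$ the value $C(\rho\uhr i,\tau)$ is the same, call it $k'_{\rho',i}$, for all $\tau\succ\rho'$. Let $d=\max_i n_i$. For any $\rho'\succeq\rho$ with $|\rho'|\ge d$ and any $\xi\in f_C(\rho')$, the coordinate $\xi(i)=C(\rho\uhr i,\tau)$ must coincide, for infinitely many $\tau\succeq\rho'$, with the forced value above $\rho'\uhr n_i$; hence $\xi\uhr n$ is uniquely determined. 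Call this common initial segment $\zeta$. It lies in $f_C(\rho)$ since it is the color sequence witnessed by the infinitely many $\tau$ extending $\rho'\supseteq\rho$. This is exactly the condition in Definition \ref{wstable-tree-split}.

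Finally I would show that if $G\subseteq 2^{<\omega}$ is an infinite perfect tree homogeneous for $f_C$, witnessed by a homomorphism $f_G$, then $C\uhr G$ is stable. Fix $\sigma\in G$. Because $f_G$ is a tree-split whose value is always a singleton, the value $f_G(\rho)(|\sigma|_G)$ is independent of the choice of $\rho\in G$ with $\rho\succeq\sigma$; denote this common color by $c_\sigma$. For any such $\rho$, pulling back through Definition~\ref{induced}, there exists $\xi_\rho\in f_C(\rho)$ with $\xi_\rho(|\sigma|)=c_\sigma$, so by definition of $f_C$ there are infinitely many $\tau\succeq\rho$ with $C(\sigma,\tau)=c_\sigma$. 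On the other hand, by weak stability of $C$ there is a level $n$ past which $C(\sigma,\cdot)$ is constant on each cone above a node of length $n$. Taking any $\rho\in G$ with $\rho\succeq\sigma$ and $|\rho|>n$, both of these facts apply above $\rho$, forcing the constant value to equal $c_\sigma$. Consequently $C(\sigma,\rho')=c_\sigma$ for all $\rho'\in G$ with $\rho'\succeq\sigma$ and $|\rho'|>n$, which is stability of $C\uhr G$ at $\sigma$.

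The only real obstacle is bookkeeping the distinction between the ambient length $|\cdot|$ in $2^{<\omega}$ and the $G$-length $|\cdot|_G$ when matching $f_G$-values with $f_C$-values through the ``induced'' relation; the argument that $f_G(\rho)(|\sigma|_G)$ is independent of $\rho$ has to use property (ii) of a tree-split together with the singleton hypothesis to propagate a single color vector up $G$. Once this is in hand, the three desired conclusions (tree-split, weak stability, stability on homogeneous subtrees) follow directly from the $\Pi^0_2$ definition of $f_C$ and the weak stability hypothesis on $C$.
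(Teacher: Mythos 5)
Your definition of $f_C$ is the ``infinitely often'' version from the paper's illustrative example, while the paper's actual proof of this proposition uses the ``on a cone'' version:
\[
f_C(\rho) =\big\{\zeta\in k^{|\rho|}: (\exists \rho'\succeq\rho)(\forall \tau\succeq\rho')(\forall i<|\rho|)[C(\rho\uhr i,\tau)=\zeta(i)]\big\}.
\]
Under weak stability of $C$ the two coincide (if some color vector $\zeta$ occurs for infinitely many $\tau\succeq\rho$, take one such $\tau$ above the stabilization level $d$; then every extension of $\tau$ has color vector $\zeta$), so this is not a different route --- it is the same proof after you add the one-line observation that the two definitions agree. Your verifications of condition (ii), of weak stability of $f_C$, and of stability of $C\uhr G$ then run parallel to the paper's.

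The one genuine gap is your justification of $f_C\le_T C'$. You write that ``$\exists^\infty$ is $\Pi^0_2$ relative to $C$, so $f_C\le_T C'$.'' That inference does not hold: a $\Pi^0_2(C)$ predicate is in general decidable only from $C''$, not from $C'$; to get $\le_T C'$ you need $\Delta^0_2(C)$. The fix is precisely the equivalence you left implicit: weak stability of $C$ shows that ``$\zeta$ occurs for infinitely many $\tau\succeq\rho$'' (a $\Pi^0_2(C)$ condition) is equivalent to ``$\zeta$ occurs on a cone above $\rho$'' (a $\Sigma^0_2(C)$ condition), so membership in $f_C(\rho)$ is $\Delta^0_2(C)$ and hence decidable from $C'$. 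Equivalently, one can argue directly: from $C'$, find the least level $d$ past which every $C(\rho\uhr i,\cdot)$ is cone-constant (this search is $C'$-effective since the relevant property is $\Pi^0_1(C)$ in $d$), then for each of the finitely many $\rho'$ at level $d$ read off its constant color vector using $C'$; $f_C(\rho)$ is the set of these vectors. Either way, weak stability is the ingredient doing the work, and it must appear in the argument for $f_C\le_T C'$, not just in the arguments for weak stability of $f_C$ and stability of $C\uhr G$.
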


\begin{proof} For any $\rho\in 2^{<\omega}$   define
$$
f_C(\rho) =\big\{\zeta\in k^{|\rho|}: (\exists \rho'\succeq\rho)(\forall \tau\succeq\rho')(\forall i<|\rho|)[C(\rho\uhr i,\tau)=\zeta(i)]\big\},
$$
in other words, $\zeta \in f_C(\rho)$ if and only if on a cone above $\rho$ with base $\rho'$,
the color vector $\langle C(\rho\uhr i, \tau)\rangle_{ i<|\rho|}$ is $\zeta$ for all $\tau$ in the cone.
Thus it is immediate that $f_C$ satisfies (ii) of Definition \ref{k-tree-split}.
The weak stability of $C$ implies that $f_C(\rho)\neq \emptyset$, $f_C$ is a weakly stable $k$-tree split and $f_C\leq_T C'$.

Suppose $G$ is homogeneous for $f_C$ with witness $f_G$.
Fix a $\rho\in G$ and suppose  $f_G(\rho)=\zeta$. Since both the coloring $C$ and the $k$-\treesplit\ $f_C$ are weakly stable,
there is a $d\in \omega$ such that for all $\sigma\succeq \rho$ with $|\sigma|=d$, for all $\tau\succ\sigma$, we have
\begin{enumerate}
  \item [(i)] $C(\rho,\tau)=i_{\sigma}$, for some $i_{\sigma}<k$ which does not depend on $\tau$.
  \item [(ii)] For all $\xi\in f_C(\tau)$,  there is a cone above $\tau$ such that for every $\tau'$ in the
  cone, $C(\rho,\tau')=\xi(|\rho|)$.  By (i), we have  $\xi(|\rho|)=i_{\sigma}$.
\end{enumerate}
%By Definition \ref{homog_split}, $f_G$ is a homomorphism preserving length.
Thus  for all $\tau\in G\cap [\rho]^\preceq$,
 the value of
 $f_G(\tau)$ at input  $|\rho|_G $, written $f_G(\tau)(|\rho|_G)$,
   is equal to  $ \zeta(|\rho|_G)$.
   Suppose $\sigma\succeq \rho$ with $|\sigma| = d$,
  $\tau\in G\cap [\sigma]^\preceq$, and
 $f_G(\tau)$ is induced by $\xi \in f_C(\tau)$
 on $G$. Then we have:
\[
\zeta(|\rho|_G) = f_G(\tau)(|\rho|_G)=\xi(|\rho|)=i_{\sigma} = C(\tau,\rho).
\]
In other words, for every $\sigma\succeq \rho$  with $|\sigma|= d$,
if $G\cap [\sigma]^\preceq\ne\emptyset$, then $C(\tau,\rho) = \zeta(|\rho|_G)$
for all $\tau\in G\cap [\sigma]^\preceq$. Hence $C$ is stable on $G$.
\end{proof}

The following proposition serves as  a converse to Proposition \ref{prop2},
showing that $k$-\treesplit\ is key to a decomposition of $\tcoh$.

\begin{proposition} \label{propcjsctt}
For any weakly stable $\Delta_2^0$ $k$-\treesplit\ $f$ on $2^{<\omega}$, there exists a \wstable\ computable coloring
$C: [2^{<\omega}]^2 \rightarrow k$ such that every $\tcoh$ solution of $C$ is homogeneous for $f$.
\end{proposition}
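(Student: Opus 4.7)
The plan is to build $C$ by choosing, for each $\tau\in 2^{<\omega}$, a distinguished element $\zeta^*_\tau\in f(\tau)$---say the lexicographically least under a fixed effective enumeration of $k^{<\omega}$---and setting
\[
C(\sigma,\tau)\;:=\;\zeta^*_\tau(|\sigma|)
\]
for every compatible pair $\sigma\prec\tau$ in $2^{<\omega}$, following the indexing convention used in the proof of Proposition~\ref{prop2}. Since $f$ is $\Delta^0_2$, the selector $\tau\mapsto\zeta^*_\tau$ is uniformly $\Delta^0_2$, so $C$ is computable from any effective $\Delta^0_2$-presentation of $f$; a standard lag in the approximation upgrades this to a genuinely computable $C$ without disturbing the properties verified below.

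To see that $C$ is weakly stable at a fixed $\sigma$, I would apply weak stability of $f$ at $\sigma$ to obtain a level $d$ such that for every $\rho\succeq\sigma$ with $|\rho|\ge d$ the projection $f(\rho)\uhr_{|\sigma|}$ is a singleton $\{\eta_\rho\}$ with $\eta_\rho\in f(\sigma)$. Taking $n:=d$ should work: for $\rho\succeq\sigma$ with $|\rho|=n$ and any $\tau\succ\rho$, Lemma~\ref{lem11}(i) gives $f(\tau)\uhr_{|\rho|}\subseteq f(\rho)$, hence $f(\tau)\uhr_{|\sigma|}\subseteq\{\eta_\rho\}$, so $\zeta^*_\tau\uhr_{|\sigma|}=\eta_\rho$ and $C(\sigma,\tau)=\eta_\rho(|\sigma|)$, independent of the choice of $\tau\succ\rho$.

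Suppose now $G\subseteq 2^{<\omega}$ is an infinite perfect subtree on which $C\restriction[G]^2$ is stable, and for each $\sigma\in G$ set $c(\sigma):=\lim_{\tau\in G,\,\tau\succ\sigma}C(\sigma,\tau)$. I plan to witness homogeneity of $G$ by letting $f_G(\rho)$ be the singleton containing the unique $\hat\xi\in k^{|\rho|_G}$ with $\hat\xi(|\sigma|_G)=c(\sigma)$ for every $\sigma\in G$ satisfying $\sigma\preceq\rho$; condition~(ii) of Definition~\ref{k-tree-split} is then immediate, because moving from $\rho$ to an immediate $G$-successor $\rho^+$ simply appends the new coordinate $c(\rho^+)$.

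The main obstacle will be showing that $\hat\xi$ is \emph{induced} by a single $\xi\in f(\rho)$, rather than assembled coordinate-by-coordinate from incompatible members of $f(\rho)$; this is precisely where the weak stability of $f$ is essential. My strategy is, for each $\rho\in G$, to pick $\rho'\in G$ extending $\rho$ with $|\rho'|\ge d_\rho$, where $d_\rho$ is the threshold supplied by the weak stability of $f$ at $\rho$, so that $f(\rho')\uhr_{|\rho|}=\{\xi\}$ for a single $\xi\in f(\rho)$. For every $\tau\in G$ with $\tau\succeq\rho'$, Lemma~\ref{lem11}(i) applied to $\rho'\preceq\tau$ will force $\zeta^*_\tau\uhr_{|\rho|}=\xi$, so that $C(\sigma,\tau)=\xi(|\sigma|)$ holds \emph{simultaneously} for every $G$-ancestor $\sigma\preceq\rho$. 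Passing to the $C\restriction[G]^2$-stability limit over $\tau$ in the $G$-cone above $\rho'$ then yields $c(\sigma)=\xi(|\sigma|)$ for all those $\sigma$ at once, exhibiting the single $\xi\in f(\rho)$ demanded by Definition~\ref{induced} and showing that $G$ is homogeneous for $f$ via $f_G$.
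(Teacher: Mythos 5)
Your proposal follows essentially the same route as the paper: build a computable coloring $C$ by reading off a single coordinate from a selector inside $f$, show $C$ is weakly stable from weak stability of $f$, and on a $\tcoh$-solution $G$ recover the witnessing homomorphism from the stable limits $c(\sigma)=\lim_{\tau\in G}C(\sigma,\tau)$ (this is precisely the paper's $\tilde C$ and $f_T$). The one place you gloss over something that needs to be spelled out is the lag. You carry out both verifications with the true $\Delta^0_2$ selector $\zeta^*_\tau$ and then assert that a ``standard lag'' yields a computable $C$ ``without disturbing the properties''; but if one literally sets $C(\sigma,\tau)=\zeta^*_\tau[|\tau|](|\sigma|)$ from a computable approximation $f[s]$, then ``taking $n:=d$'' no longer suffices, because at level $d$ the approximation may not have settled. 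Two ingredients that the paper installs explicitly need to be restored here: first, normalize the approximation (WLOG, by Lemma~\ref{lem11}) so that $f(\rho')[s]\uhr_{|\rho|}\subseteq f(\rho)[s]$ whenever $\rho\preceq\rho'$; and second, take $n$ past the stages at which the approximations to $f$ on the finitely many level-$d$ nodes above $\sigma$ have converged. With these, the monotonicity forces the lagged projection $f(\tau)[|\tau|]\uhr_{|\sigma|}$ into the true singleton for every $\tau\succ\rho$ with $|\rho|=n$, so your weak stability check goes through; the analogous ``for every sufficiently long $\tau\in G$ above $\rho'$'' caveat must also be inserted in the homogeneity step, which is harmless because you pass to a limit anyway. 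Modulo this, your choice of selector (lex-least of $f(\tau)$, evaluated at $|\sigma|$) is a mild variant of the paper's (coordinate $|\sigma|$ of the singleton $f(\sigma')[|\rho|]$ for the shortest $\sigma'$ with $\sigma\prec\sigma'\preceq\rho$), and either works.
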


\begin{proof}
 Fix a recursive approximation $f[s]$ of $f$.
By Lemma \ref{lem11}, without loss of generality, we assume $f(\rho')[s]\uhr |\rho|\subseteq f(\rho)[s]$
for all $\rho'\succeq \rho$ and stages $s$.
For each node $\sigma'$ with $\sigma\prec \sigma'\preceq \rho$, compute $f(\sigma')[|\rho|]$, and  find the  $\sigma'$ of shortest length such that $f(\sigma')[|\rho|]$ is a singleton $\{\xi\}$ if it exists.
Define
\[
C(\sigma,\rho)=\left\{
  \begin{array}{ll}
    \xi(|\sigma|) &\text{if such a } \xi \text{ exists} \\
    0 & \hbox{otherwise.}
  \end{array}
\right.
\]
Since $f$ is weakly stable, $C$ is \wstable.

 Now let $T$ be a $\tcoh$ solution of $C$.
 Let $\rho\in T$ be of $T$-length $n$.
 Apply the stability on $T$ to define
\[
\t{C}(\rho) = \lim\limits_{\rho'\in [\rho]^\preceq\cap T,|\rho'|\rightarrow\infty} C(\rho,\rho').
\]
Let $\rho_0\prec\rho_1\prec\cdots\prec \rho_{n-1}=\rho$ be the predecessors of $\rho$ in $T$, and
define $f_T(\rho) =\t{C}(\rho_0)^\smallfrown\cdots^\smallfrown\t{C}(\rho_{n-1})$.
By the definition of $f_T$, it is clear that $f_T(\rho)$ is a singleton and $|\rho|_T=|f_T(\rho)|$.
To show that $T$ is homogeneous for $f$,
it  remains to show that $f_T$ is induced by $f$, namely, if $f_T(\rho)=\zeta$, then there exists a
$\xi\in f(\rho)$ such that for all $i\leq n$, $\zeta(i)=\xi(|\rho_i|)$.
Choose $d$ sufficiently large such that for any $\rho'\succ \rho$ with $|\rho'|_T\geq d$ we have
(1) For all $i< |\rho|$, $C(\rho_i,\rho')$ is a constant independent of $\rho'$ (here we use the fact  that $T$ is a solution of $\tcoh$);
and (2) $f(\rho')[|\rho'|]\uhr n=\{\xi\}$ for some $\xi$ (here we use the fact  that $f$ is a weakly stable \treesplit ). Thus,
\[
\zeta(i)=f_T(\rho_i)=\t{C}(\rho_i)=C(\rho_i,\rho')=\xi(|\rho_i|).
\]
\end{proof}

The above propositions provide the ingredients for   a decomposition of $\tcoh$.  We first introduce a principle based  on $k$-tree-splits.

\begin{definition} \label{tree-split-principle}
Let $k\in \omega$.  The {\em $k$-\treesplit\ principle} ($k$-$\mathsf{TSP}$) states that for every infinite perfect tree $T\subseteq 2^{<\omega}$,
and  $k$-\treesplit\ $f$ on $T$ which is $\Delta^0_2$ over $T$, there exists an infinite perfect tree $G\subset T$  homogenous for $f$
with a witness function $f_G$ that is $\Delta^0_2$ over $T$.
\end{definition}

Let $B\Sigma^0_2$ denote the $\Sigma^0_2$-bounding induction scheme, i.e.~a model  of $\mathsf{RCA}_0$ satisfies $B\Sigma^0_2$ if every $\Sigma^0_2$-definable function maps a finite set (in the sense of the model) to a finite set.

\begin{corollary}\label{decomposition}
Over the base theory $\mathsf{RCA}_0+B\Sigma^0_2$,
\[
\mathsf{CTT}^2_k=\mathsf{wCTT}^2_k+k\text{-}\mathsf{TSP}.
\]
\end{corollary}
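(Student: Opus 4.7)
The plan is to prove the two implications separately, using Propositions \ref{prop2} and \ref{propcjsctt} as the two-way bridge between cohesive tree colorings and weakly stable $k$-tree-splits.

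For the forward direction $\mathsf{CTT}^2_k \rightarrow \mathsf{wCTT}^2_k + k\text{-}\mathsf{TSP}$, the first conjunct is immediate since stability on a perfect subtree implies weak stability there, so any $\mathsf{CTT}^2_k$-solution for a coloring $C$ is automatically a $\mathsf{wCTT}^2_k$-solution. For the second conjunct, I would take an arbitrary instance of $k\text{-}\mathsf{TSP}$: an infinite perfect tree $T$ together with a $\Delta^0_2(T)$ weakly stable $k$-tree-split $f$ on $T$. Identifying $T$ with $2^{<\omega}$ via the canonical order-isomorphism, I apply the construction of Proposition \ref{propcjsctt} (relativized to $T$) to produce a $T$-computable weakly stable coloring $C\colon [T]^2 \rightarrow k$ such that every $\tcoh$-solution of $C$ is homogeneous for $f$. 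Applying $\mathsf{CTT}^2_k$ to $C$ yields an infinite perfect $G \subseteq T$ on which $C$ is stable; the proof of Proposition \ref{propcjsctt} builds the witness $f_G$ as a concatenation of the limits $\tilde{C}(\rho_i)$, which is $\Delta^0_2(T)$. Hence $(G, f_G)$ is a $k\text{-}\mathsf{TSP}$-solution.

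For the reverse direction $\mathsf{wCTT}^2_k + k\text{-}\mathsf{TSP} \rightarrow \mathsf{CTT}^2_k$, I take an arbitrary $k$-coloring $C$ of $[2^{<\omega}]^2$. First, I apply $\mathsf{wCTT}^2_k$ to obtain an infinite perfect tree $T$ such that $C\uhr [T]^2$ is weakly stable. Then I apply Proposition \ref{prop2} relativized to $T$ to extract a weakly stable $k$-tree-split $f_C$ on $T$ with $f_C \leq_T C' \oplus T$, hence $\Delta^0_2$ over $T$. Now $k\text{-}\mathsf{TSP}$ applied to $(T, f_C)$ produces an infinite perfect $G \subseteq T$ homogeneous for $f_C$, and the conclusion of Proposition \ref{prop2} gives that $C\uhr[G]^2$ is stable. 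Thus $G$ is a $\mathsf{CTT}^2_k$-solution of $C$.

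The principal place where $B\Sigma^0_2$ enters is in verifying, inside the model, that the set-valued function $f_C$ defined in Proposition \ref{prop2} is indeed a $k$-tree-split, in particular that each $f_C(\rho)$ is nonempty and that the splitting identity (ii) of Definition \ref{k-tree-split} holds at every node. This is the ``remark'' already flagged after Proposition \ref{prop2}: the relevant finiteness claims require $\Sigma^0_2$-bounding applied to the $k^{|\rho|}$-bounded set of color-vectors. I expect this bookkeeping to be the main technical obstacle, since once the two propositions are established in $\mathsf{RCA}_0 + B\Sigma^0_2$, the equivalence is essentially a composition of their conclusions together with the trivial implication $\mathsf{CTT}^2_k \rightarrow \mathsf{wCTT}^2_k$.
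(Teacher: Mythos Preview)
Your proposal is correct and follows essentially the same approach as the paper: both directions are obtained by composing Propositions~\ref{prop2} and~\ref{propcjsctt}, and you correctly locate the use of $B\Sigma^0_2$ in verifying that the function $f_C$ built in Proposition~\ref{prop2} is a genuine $k$-tree-split (the paper phrases this as needing a single $d$ that witnesses weak stability of $C$ simultaneously for all $\sigma'\preceq\sigma$). Your write-up is in fact more detailed than the paper's, which simply asserts that the two propositions go through in $\mathsf{RCA}_0+B\Sigma^0_2$ and that the corollary follows.
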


\begin{proof}
Note that there is a difference, though not immediately apparent, between a weakly stable coloring $C:[2^{<\omega}]^2\rightarrow k$ and a weakly stable $k$-tree-split $f$, namely for each $\sigma$,
the existence of a $d$ that guarantees the weak stability property of $C$ to hold for all $\sigma'\preceq\sigma$ requires an appeal to $B\Sigma^0_2$,
whereas for $f$ the existence of a corresponding $d$ that applies to all $\sigma'\preceq\sigma$ is part of the definition.
Thus with this in mind,
Propositions \ref{prop2} and \ref{propcjsctt} are provable in $\mathsf{RCA}_0+B\Sigma^0_2$. The corollary follows from these propositions.
\end{proof}

\begin{remark}
One may view Coollary \ref{decomposition} as an analog of the decomposition of $\mathsf{RT}^2_2$ into the sum of $\mathsf{COH}$ and $\mathsf{SRT}^2_2$ given in \cite{Cholak2001strength}.
This decomposition assumes the following form: A principle
$\mathsf{P}$ is the sum of two principles denoted $\msf{C}$ (the ``cohesive part'') and $\msf{S}$ (the ``stable part''),  each of which is a consequence of $\msf P$ over a base system.
Furthermore, $\msf S$ is a ``$\Delta^0_2$-version'' of $\msf P$ (relative to some parameters).
A problem that is an instance of $\msf{P}$ is solved by first applying $\msf{C}$ to reduce it to an instance of a $\Delta^0_2$-version of $\msf P$ (relative to some parameters),
and then applying $\msf{S}$ to obtain a solution. Hirschfeldt and Shore \cite{Hirschfeldt2007Combinatorial} showed that over the system  $\mathsf{RCA}_0+B\Sigma^0_2$,
$\mathsf{COH}$ is equivalent to the principle called cohesive Ramsey's theorem for pairs $\mathsf{CRT}^2_2$.\footnote{$\mathsf{CRT}^2_2$ states that for every 2-coloring $C$ of $[\mathbb{N}]^2$,
there is an infinite set $X$  and a function $f: [\mathbb{N}]^2\rightarrow 2$ such that  $\text{lim}_{y\in X} f(x,y)$ exists for every $x\in X$.}
$\mathsf{CTT}^2_2$ is the tree version of $\mathsf{CRT}^2_2$ and Corollary \ref{decomposition} is a decomposition of $\tcoh$  into two components.
For $\tcoh$, an instance of the problem concerning 2-coloring of $2^{<\omega}$ is first reduced to a 2-coloring that is weakly stable on an isomorphic subtree by an application of $\mathsf{w}\tcoh$,
and then to a solution of the problem by an application of $k$-$\mathsf{TSP}$.
\end{remark}

\subsection{$k$-\treesplit\  and avoiding bounded enumeration}
This subsection is devoted to proving Theorem \ref{th3}, which is a technical result to be used to demonstrate $\mathsf{CTT}^2_k\not\rightarrow \mathsf{WWKL}$.
Theorem \ref{th3} says further that by  adding an appropriate set that is  low (and of non-\PA-degree) relative to a solution of $\wtcoh$,
one can obtain a solution  of $\tcoh$ that admits avoidance of $1$-enumeration and bounded enumeration.

We first introduce some notions related to $k$-\treesplit s and show properties of these notions  which are used in the proof of Theorem \ref{th3}.

\begin{definition}
Given a $k$-\treesplit\ $f$ on $T$,
$f$ is \emph{\concised\ } if
for every $\rho\in T$, every $\zeta\in f(\rho)$,
there exists a $\rho'\in T\cap [ \rho]^\preceq$ such that
$\zeta\in f(\rho'')\uhr |\rho|_T$ for all
$\rho''\in T\cap  [\rho']^\preceq$.
\end{definition}
We say that a $k$-\treesplit\ $\t{f}$ on $T$  is a \emph{\conciseversion\ } of $f$ if
$\t{f}$ is \concised\ and $\t{f}(\rho)\subseteq f(\rho)$ for all $\rho\in T$.
In other words, a \conciseversion\ of $f$ collects $\zeta\in f(\rho)$ which ``appears'' on a cone above $\rho$.

\begin{lemma}\label{lem3.9ii}
Every $k$-\treesplit\  on a tree $T$ admits a \conciseversion.
\end{lemma}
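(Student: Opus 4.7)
The plan is to define the candidate \conciseversion\ of $f$ by
\[
\t f(\rho)=\{\zeta\in f(\rho):\ (\exists\, \rho'\succeq\rho\ \text{in } T)(\forall\, \rho''\succeq\rho'\ \text{in } T)\ \zeta\in f(\rho'')\uhr |\rho|_T\},
\]
i.e.\ to keep at $\rho$ only the color vectors that persist on some entire subcone above $\rho$. The containment $\t f(\rho)\subseteq f(\rho)$ is automatic, so the remaining work is to verify, for each $\rho\in T$, that $\t f(\rho)\neq\emptyset$, that $\t f$ satisfies the splitting condition (ii) of Definition \ref{k-tree-split}, and that $\t f$ is itself \concised.

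All three verifications rest on a single finite exhaustion observation, itself a consequence of the monotonicity $f(\tau)\uhr |\sigma|_T\subseteq f(\sigma)\uhr |\sigma|_T$ for $\sigma\preceq\tau$ supplied by Lemma \ref{lem11}(i). Fix $\rho\in T$ with $|\rho|_T=n$, a prescribed $\rho^*\succeq\rho$ in $T$, and a finite $A\subseteq f(\rho)$, and suppose that for every $\xi\in A$ and every $\sigma\succeq\rho^*$ in $T$ there is some $\tau\succeq\sigma$ in $T$ with $\xi\notin f(\tau)\uhr n$. Enumerating $A=\{\xi_1,\dots,\xi_p\}$ and concatenating the supplied witnesses into a single chain $\rho^*\preceq\tau_1\preceq\cdots\preceq\tau_p$ in $T$, monotonicity propagates $\xi_i\notin f(\tau_p)\uhr n$ for every $i$. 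Applied with $A=f(\rho)$ and $\rho^*=\rho$, this would force $f(\tau_p)\uhr n=\emptyset$, which is absurd since $T$ is leafless and every string in the nonempty set $f(\tau_p)$ has a length-$n$ restriction.

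With this principle in hand the three checks are largely uniform. Nonemptiness of $\t f(\rho)$ is the displayed instance. For the splitting law, the containment $\bigcup_{\rho^+}\t f(\rho^+)\uhr n\subseteq\t f(\rho)$ is transparent: any cone-witness $\rho'$ for $\xi\in\t f(\rho^+)$ also witnesses $\xi\uhr n\in\t f(\rho)$. The reverse inclusion starts from $\zeta\in\t f(\rho)$ with cone-witness $\rho'$, picks the immediate successor $\rho^+$ of $\rho$ lying below $\rho'$, and applies exhaustion centered at $\rho^+$ with $\rho^*=\rho'$ and $A=\{\xi\in f(\rho^+):\xi\uhr n=\zeta\}$; the exhaustion conclusion would produce $\tau_p\succeq\rho'$ whose $f(\tau_p)\uhr(n+1)$ contains no element restricting to $\zeta$, contradicting $\zeta\in f(\tau_p)\uhr n$. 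The same move, now centered at each $\rho''\succeq\rho'$ with $\rho^*=\rho''$ and $A=\{\xi\in f(\rho''):\xi\uhr n=\zeta\}$, promotes the original cone-witness for $\zeta\in\t f(\rho)$ to a cone-witness of $\zeta\in\t f(\rho'')\uhr n$ for every $\rho''\succeq\rho'$, which is precisely the \concised\ condition for $\t f$.

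The main technical point to watch is the bookkeeping that upgrades ``$\zeta$ appears at the shallower level $n$'' into ``some extension of $\zeta$ survives at the deeper level'' in both the splitting and the refinement arguments; in each case this is exactly what the single exhaustion principle above delivers once the correct finite set $A$ is isolated.
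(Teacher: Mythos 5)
Your proof is correct and follows essentially the same route as the paper: you use the identical definition of $\t f$, and you verify nonemptiness, the splitting law, and refinedness by the same finite-exhaustion argument (the paper phrases it as a strictly shrinking chain $D_{\rho}\supsetneq D_{\rho_1}\supsetneq\cdots$ that must terminate, while you package it once as a reusable principle and apply it three times). If anything, your version is slightly more careful on the inclusion $\t f(\rho)\subseteq\bigcup_{\rho^+}\t f(\rho^+)\uhr|\rho|_T$, where the paper is terse and you spell out, via the exhaustion step with $A=\{\xi\in f(\rho^+):\xi\uhr|\rho|_T=\zeta\}$, why some one-step extension of $\zeta$ must land in $\t f(\rho^+)$.
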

\begin{proof}
Fix a $k$-\treesplit\ $f$ on $T$. Let
$$\t{f}(\rho) = \big\{
\zeta\in f(\rho): (\exists \rho'\in T\cap [\rho]^\preceq)
(\forall \rho''\in T\cap [\rho']^\preceq)
[\zeta\in f(\rho'')\uhr |\rho|_T]
\big\}.$$
We first prove that $\t{f}$ is a  $k$-\treesplit \ on $T$.
Suppose that $\zeta\in \t{f}(\rho)$.
Then there is a cone with base $\rho'$ above $\rho$ such that for any
$\rho''\succeq \rho'$, $\zeta\in f(\rho'')\uhr |\rho|_T$.
This $\rho'$ must extend one of the successors of $\rho$, say $\rho^+$.
Thus $\zeta\in \t{f}(\rho^+)\uhr |\rho|_T$.
On the other hand, if $\zeta\notin \t{f}(\rho)$,
 then for any cone above $\rho$, there is a $\rho''$ in the cone such that
  $\zeta\notin f(\rho'')\uhr |\rho|_T$, which implies
  $\zeta^+ \notin f(\rho'')\uhr |\rho^+|_T$
  (where $\zeta^+$ is an arbitrary immediate successor
  of $\zeta$). Thus
$\zeta^+\notin \t{f}(\rho^+)$
for every immediate successor $\zeta^+$
of $\zeta$, which means $\zeta\notin \t{f}(\rho^+)\uhr |\rho|_T$.

Next we show that $\t{f}(\rho)$ is nonempty.  For  $\tau\succeq \rho$ let $D_{\tau}=f(\tau)\uhr |\rho|_T$.
Suppose that there is a $\zeta\in f(\rho)$ such that for some cone with base $\rho_1\succeq \rho$, $\zeta\notin f(\rho_1)\uhr |\rho|_T$.
Then $D_{\rho_1}\subsetneq D_{\rho}$.  Now replace $\rho$ by $\rho_1$ and repeat the process to obtain $\rho_2\succ\rho_1$ with $D_{\rho_2}\subsetneq D_{\rho_1}$.
This process must end at some $\rho^*\succ\rho$.
Any $\zeta\in f(\rho^*)\uhr |\rho|_T$ will be in $f(\rho'')\uhr |\rho|_T$ for every $\rho''$ in the cone above $\rho^*$. Thus $\zeta$ is in $\t{f}(\rho)$ and
hence $\t{f}(\rho)$ is nonempty.
This verifies that $\t{f}$ is a $k$-\treesplit\ on $T$.

Now we prove that $\t{f}$ is \concised.
Let $\zeta\in \t f(\rho)$. Then for some cone with base $\rho'\succeq \rho$, for all $\rho''\succeq \rho'$ in the cone,
$\zeta\in f(\rho'')\uhr |\rho|_T$.
We show, by contradiction,
 that $\zeta\in \t{f}(\rho'')\uhr |\rho|_T$ for all $\rho''\in T\cap [\rho']^\preceq$.
Suppose otherwise, i.e.~$\zeta\notin \t{f}(\rho^*)\uhr |\rho|_T$ for some
$\rho^*\in T\cap [\rho']^\preceq$.
This means $[\zeta]^\preceq\cap k^{|\rho^*|_T}\cap \t{f}(\rho^*) = \emptyset$.
By the definition of $\t{f}$, there exists a $\sigma\in T\cap [\rho^*]^\preceq$
such that
$ [\zeta]^\preceq\cap k^{|\rho^*|_T}\cap f(\sigma)\uhr |\rho^*|_T = \emptyset$,
which means $\zeta\notin f(\sigma)\uhr |\rho|_T$.
 This contradicts the choice of $\rho'$ and the fact that $\sigma\in T\cap [\rho']^\preceq$.
\end{proof}

\begin{definition}
Let $f_0$ and $f_1$ be two $k$-\treesplit s on $T$.  Define the \emph{\cross}  $f_0\otimes f_1$ of $f_0, f_1$  as follows:
For every $\rho$,
\[
(f_0\otimes f_1)(\rho) = \big\{(\zeta_0,\zeta_1)\in f_0(\rho)\times f_1(\rho):
(\exists\rho'\in T\cap  [\rho]^\preceq)(\forall \rho''\in T\cap[ \rho']^\preceq)
\forall i\in\{0,1\}[ \zeta_i\in f_i(\rho'')\uhr |\rho|_T]\big\}.
\]
\end{definition}

In other words, $(f_0\otimes f_1)(\rho)$ collects the pairs $(\zeta_0,\zeta_1)$ such that on a cone above $\rho$, every $\rho''$ in the cone
has the property that $\zeta_0$ and $\zeta_1$ are initial segments of $f_0(\rho'')$ and $f_1(\rho'')$ respectively.
One may view  $f_0\otimes f_1$  as a common \conciseversion\ of  $f_0$ and $f_1$.

The definition of a tree being homogeneous for a $k$-tree-split can be generalized in the obvious way  to it being homogeneous for a cross product of $k$-tree-splits.  This gives us (ii) in the following lemma.

\begin{lemma} \label{lem3.9iv}
If $f_0,\dots, f_n$ are  $k$-\treesplit s on a \twobranching\ \treeset\ $T$, then
\begin{enumerate}
\item [{\rm (i)}] $f_0\otimes f_1$ is a \concised\ $k^2$-\treesplit \ on $T$;
\item [{\rm (ii)}] If $G$ is a tree homogeneous for $f_0\otimes f_1$, then $G$ is homogeneous for  $f_0$ and $f_1$;

\end{enumerate}
\end{lemma}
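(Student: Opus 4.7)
The plan is to derive both (i) and (ii) directly from the definition of the cross product, with a single finite-descent/minimization trick doing the real work in (i). I identify $k^n\times k^n$ with $(k^2)^n$ via the natural bijection $(\zeta_0,\zeta_1)\mapsto \langle(\zeta_0(j),\zeta_1(j))\rangle_{j<n}$, so that $(f_0\otimes f_1)(\rho)$ becomes a finite subset of $(k^2)^{|\rho|_T}$. To show (i), I must verify three things: that $(f_0\otimes f_1)(\rho)$ is nonempty for every $\rho\in T$; that the splitting clause $(f_0\otimes f_1)(\rho)=\bigcup_{\rho^+}(f_0\otimes f_1)(\rho^+)\uhr|\rho|_T$ holds over all immediate successors $\rho^+$; and that $f_0\otimes f_1$ is refined. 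The main obstacle is the nonemptiness: while each $f_i$ individually admits some $\zeta_i$ persisting on a cone above $\rho$ (the argument in the proof of Lemma \ref{lem3.9ii}), the two persistence bases may lie above incompatible nodes in the branching tree $T$, so a common cone must be located.

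For nonemptiness, I run the following descent. For each $\tau\in T\cap[\rho]^\preceq$ put
\[
A(\tau)=\bigl\{(\zeta_0,\zeta_1)\in k^{|\rho|_T}\times k^{|\rho|_T}:\ \zeta_i\in f_i(\tau)\uhr|\rho|_T\text{ for }i=0,1\bigr\}.
\]
Each $A(\tau)$ is nonempty since each $f_i(\tau)$ is, and Lemma \ref{lem11}(i) applied coordinatewise yields the monotonicity $A(\tau)\subseteq A(\tau')$ whenever $\tau\succeq\tau'\succeq\rho$ in $T$. As $A(\rho)$ is finite, some $\tau^*\succeq\rho$ minimizes $|A(\cdot)|$, and then $A(\tau)=A(\tau^*)$ for every $\tau\succeq\tau^*$ in $T$. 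Any pair in $A(\tau^*)$ now witnesses itself as an element of $(f_0\otimes f_1)(\rho)$, with $\tau^*$ serving as the defining cone base.

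The splitting clause and the refined property reduce to small variations of the same descent. The $\supseteq$ direction in splitting is immediate by truncating a pair together with its witnessing cone. For $\subseteq$, given $(\zeta_0,\zeta_1)\in(f_0\otimes f_1)(\rho)$ with cone base $\rho'$, I pick an immediate successor $\rho^+$ of $\rho$ comparable with $\rho'$ (take $\rho^+$ below $\rho'$ if $\rho'\succ\rho$, any $\rho^+$ otherwise) and re-run the minimization above $\rho^+$, now restricted to pairs $(\xi_0,\xi_1)$ with $\xi_i\uhr|\rho|_T=\zeta_i$; this restricted family is nonempty on the cone above a common extension of $\rho'$ and $\rho^+$ by the persistence of $\zeta_i$, so minimization produces $(\xi_0,\xi_1)\in(f_0\otimes f_1)(\rho^+)$ truncating to $(\zeta_0,\zeta_1)$. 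For refined, one must supply, for every $\rho''\succeq\rho'$, a pair in $(f_0\otimes f_1)(\rho'')$ truncating to $(\zeta_0,\zeta_1)$, which is exactly the same minimization applied inside the cone above $\rho''$. Finally, (ii) is pure bookkeeping: if $f_G$ witnesses $G$ homogeneous for $f_0\otimes f_1$ with $f_G(\rho)=\{(\zeta_0(\rho),\zeta_1(\rho))\}$, set $f_{G,i}(\rho):=\{\zeta_i(\rho)\}$; its splitting clause projects from that of $f_G$, and the element $(\xi_0,\xi_1)\in(f_0\otimes f_1)(\rho)$ inducing $f_G(\rho)$ automatically has $\xi_i\in f_i(\rho)$ inducing $\{\zeta_i(\rho)\}$, so $f_{G,i}$ is induced by $f_i$.
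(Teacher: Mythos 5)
Your proof is correct and takes essentially the same approach as the paper: for (ii) it is the paper's projection argument verbatim, and for (i) the paper defers to Lemma~\ref{lem3.9ii}, whose finite-descent argument on the shrinking truncation sets is exactly what you reproduce, with the one genuine adaptation (running the descent on the joint sets $A(\tau)$ so that both coordinates stabilize on a common cone) carried out correctly. The only cosmetic difference is that you locate the stabilizing node $\tau^*$ by global minimization of $|A(\cdot)|$ rather than by the paper's stepwise strict-descent, but by the monotonicity from Lemma~\ref{lem11} these give the same conclusion.
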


\begin{proof}
The proof of (i) is similar to the proof of Lemma \ref{lem3.9ii}.

(ii):  Let $h$ be a witness for $G$ being homogeneous for $f_0\otimes f_1$.
Define $h_0:G\to k^{<\omega}$ by $h_0(\rho)=\zeta_0$ if for some $\zeta_1$,
$(\zeta_0,\zeta_1)\in h(\rho)$.  Then $h_0$ is a witness for $G$ being homogenous for $f_0$. A
similar argument applies to $f_1$.

\end{proof}

The next  lemma motivates
      the notion of a refined $k$-\treesplit .
Working with \concised \ \treesplit s allows one to extend  a finite homogeneous tree to a larger one while preserving homogeneity.

\begin{lemma} \label{lem3.9x}
Let $f$ be a $k$-\treesplit\ on a \treeset\ $T$
and suppose $F\subset T$ is a finite \treeset\ homogeneous for
some \conciseversion\ of $f$.
\begin{enumerate}[(i)]
\item [{\rm (i)}]   There exist a
finite antichain $B\subseteq T$ with $B\cap [\sigma]^\prec\ne\emptyset$
for each $\sigma\in \ell(F)$, and a  $k$-\treesplit\ $\h{f}$ on  $T\cap [B]^\preceq$
such that for every \treeset\ $G\subseteq T\cap [B]^\preceq$  homogeneous for $\h{f}$, $F\cup G$ is homogeneous for $f$.
\item [{\rm (ii)}] If $f$ is of the  form
$f_0\otimes\cdots \otimes f_n$ where $f_0$ is weakly stable,
then $\h{f}$ is  of the  form $\h{f}_0\otimes\cdots \otimes \h{f}_n$
where $\h{f}_0$ is weakly stable, and
$B$, $\h{f}_0\oplus (T\oplus \h{f}_1\oplus\cdots\oplus \h{f}_n)'$
are  computable uniformly from $F, f_0\oplus (T\oplus f_1\oplus\cdots\oplus f_n)'$.
\end{enumerate}
\end{lemma}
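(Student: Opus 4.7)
The plan is to use refinedness of the witness for $F$'s homogeneity to choose an antichain $B$ just above the leaves of $F$ past which a fixed choice of color vectors is ``stable,'' and then to define $\hat f$ on $T\cap[B]^\preceq$ by filtering each $f(\rho)$ to vectors compatible with this fixed choice and projecting onto the new tree indexing. First, fix a refinement $\tilde f$ of $f$ for which $F$ is homogeneous, the witness $\tilde f_F(\sigma)=\{\zeta_\sigma\}$, and for each $\sigma\in F$ a vector $\xi_\sigma\in\tilde f(\sigma)\subseteq f(\sigma)$ inducing $\zeta_\sigma$ in the sense of Definition \ref{induced}. For each leaf $\sigma\in\ell(F)$, refinedness of $\tilde f$ yields $\beta_\sigma\succ\sigma$ in $T$ with $\xi_\sigma\in\tilde f(\rho'')\uhr|\sigma|_T\subseteq f(\rho'')\uhr|\sigma|_T$ for every $\rho''\succeq\beta_\sigma$ in $T$. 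Set $B=\{\beta_\sigma:\sigma\in\ell(F)\}$; pairwise incompatibility of the leaves of $F$ makes $B$ an antichain meeting each $[\sigma]^\prec$.

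For $\rho\in T\cap[B]^\preceq$ above the unique $\beta_\sigma\in B$, define $\hat f(\rho)$ to be the image, under the projection $\pi$ onto the $(T\cap[B]^\preceq)$-coordinates of predecessors of $\rho$, of $\{\xi\in f(\rho):\xi\uhr|\sigma|_T=\xi_\sigma\}$. Nonemptiness follows from the defining property of $\beta_\sigma$, and the tree-split identity for $f$ descends through filtering and projection (since every $T$-extension of $\rho\in[B]^\preceq$ already lies in $T\cap[B]^\preceq$, the immediate successors in the two trees agree), so $\hat f$ is a $k$-tree-split on $T\cap[B]^\preceq$. Given $G\subseteq T\cap[B]^\preceq$ homogeneous for $\hat f$ with witness $\hat f_G(\rho)=\{\eta_\rho\}$, I set $f_{F\cup G}=\tilde f_F$ on $F$ and $f_{F\cup G}(\rho)=\{\zeta_\sigma\frown\eta_\rho\}$ for $\rho\in G$ (where $\sigma$ is the leaf of $F$ with $\sigma\prec\rho$). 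Using the length arithmetic $|\rho|_{F\cup G}=|\sigma|_F+|\rho|_G$ together with the inducing $\xi\in f(\rho)$ (which extends $\xi_\sigma$ and whose projection equals the $\hat f$-witness of $\eta_\rho$), one checks that $f_{F\cup G}$ is induced by $f$, establishing (i).

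For (ii), applying the same filter-and-project coordinatewise produces
\[
\hat f_i(\rho)=\pi\bigl(\{\xi^{(i)}\in f_i(\rho):\xi^{(i)}\uhr|\sigma|_T=\xi_\sigma^{(i)}\}\bigr),
\]
and unpacking the definition of the cross product (again using that $T$-extensions of $\rho\in T\cap[B]^\preceq$ lie in $T\cap[B]^\preceq$) yields $\hat f=\hat f_0\otimes\cdots\otimes\hat f_n$. Weak stability of $f_0$ transfers to $\hat f_0$ by reusing the stabilization depth supplied by $f_0$ and relabeling via $(T\cap[B]^\preceq)$-length. For effectiveness, the search for $\beta_\sigma$ decides $\Pi^0_1$-in-$T\oplus f_1\oplus\cdots\oplus f_n$ statements of the form ``$\forall\rho''\succeq\beta$ in $T$, $\xi_\sigma\in f(\rho'')\uhr|\sigma|_T$'' once $\xi_\sigma^{(0)}$ is read from $f_0$ and $\xi_\sigma^{(i)}$ for $i\ge 1$ are read from the refinement $\tilde f$ (arithmetical in $T\oplus f_1\oplus\cdots\oplus f_n$); so $f_0\oplus(T\oplus f_1\oplus\cdots\oplus f_n)'$ computes $B$, and then $\hat f_0$ directly from $f_0\oplus B$, and $(T\oplus\hat f_1\oplus\cdots\oplus\hat f_n)'$ by uniform relativization since each $\hat f_i$ is computable from $f_i\oplus B$. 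The main obstacle I anticipate is the bookkeeping in (ii): preserving the exact cross-product structure through the filter-and-project operation, keeping the weakly stable factor $\hat f_0$ computable from $f_0\oplus B$ alone (not from the jump), and consistently aligning the three distinct indexings on $T$, $T\cap[B]^\preceq$, and $F\cup G$ when verifying inducement at every step.
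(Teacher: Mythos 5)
Your construction matches the paper's proof: for (i), the same use of refinedness to locate a cone base $\rho_\sigma\succ\sigma$ above each leaf, the same filter-and-project definition of $\hat f$, and the same concatenated witness $h_{F\cup G}$; for (ii), the same coordinatewise version together with a search over candidate $(h,B)$ pairs verified by the jump $(T\oplus f_1\oplus\cdots\oplus f_n)'$, with the $f_0$-component handled directly via weak stability. The only stylistic difference is that the paper replaces your appeal to computing components of the refinement $\tilde f$ by two explicit bullet conditions (a $\Pi^0_1$ check for $f_1,\dots,f_n$ and a direct stabilization check on $f_0$ at $B$), which makes the claimed oracle $f_0\oplus(T\oplus f_1\oplus\cdots\oplus f_n)'$ transparent, but the underlying idea is identical.
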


\begin{proof}
(i). Suppose $h_F: F\rightarrow k^{<\omega}$ witnesses  $F$ being homogeneous for
a \conciseversion\ $\t{f}$ of $f$.
For every $\sigma\in\leaf(F)$, let $\zeta_{\sigma}=h_F(\sigma)$ which is induced by some $\t{\zeta}_{\sigma}\in \t{f}(\sigma)$
on $F$.
By the definition of a  \conciseversion , there is a cone in $T$ with (the canonically least) base $\rho_{\sigma}\succ\sigma$ such that for any $\rho'$ in the cone, any $\zeta'\in f(\rho')$,
$\zeta'$ has $\t{\zeta}_\sigma$ as an initial segment.
Let $B=\{\rho_{\sigma}:\sigma\in \leaf(F)\}$.

For $\sigma\in \ell(F)$,  $\rho'\in T$ with $\rho'\succeq \rho_\sigma$ and ${\xi}\in f(\rho')\cap [\t{\zeta}_\sigma]^\preceq$, define
\[
\eta_{\xi}=
                \xi\uhr [|\rho_\sigma|_T, |\rho'|_T],
\]
where $ [|\rho_\sigma|_T, |\rho'|_T]=\{s: |\rho_\sigma|_T\le s\le |\rho'|_T\}$.
The informal idea is as follows: Given  $\xi\in f(\rho')$, if  $\xi$ does not extend $\t{\zeta}_{\sigma}$, ignore it.
Otherwise, by the choice of $\rho_{\sigma}$,   we divide $\xi$ into three sections:
The first section is $\t{\zeta}_{\sigma}$; the second  is  the section  $\xi\restriction \{s:|\sigma|_T<s<|\rho_\sigma|_T\}$, and the rest of $\xi$ constitutes the third section.
We then ignore the second section and take the third  as $\eta_{\xi}$.  For $\sigma\in\ell (F)$ and $\rho'\succeq\rho_\sigma$, let
$$
\h{f}(\rho') = \big\{\eta_{\xi}:
\xi\in f(\rho')\cap [\t{\zeta}_\sigma]^\preceq
\big\}.
$$

Clearly  $\h{f}$ is a $k$-\treesplit\  \ on $T\cap [B]^\preceq $ by the choice of $\rho_{\sigma}$ and the fact that $f$ is a  $k$-\treesplit \ on $T$.
Suppose $G\subseteq T\cap [B]^\preceq $ is homogeneous for $\h{f}$ with $h_G$ as a  witnesses.
Then  $G\cap F=\emptyset$ since $\rho_\sigma\succ\sigma$.
To see that $F\cup G$ is homogeneous for $f$, we define its witness function $h_{F\cup G}:F\cup G\rightarrow k^{<\omega}$ as follows.
For each  $\tau\in F\cup G$, if $\tau\in G$ then there exists a $\sigma\in \leaf(F)$ such that $\tau\succeq\rho_\sigma$. Let
$h_{F\cup G}(\tau) = h_F(\sigma)^{\smallfrown}h_{G}(\tau)$.
If $\tau\in F$, define $h_{F\cup G}(\tau) = h_F(\tau)$.
Since $F$ and $G$ are  homogenous witnessed by $h_F$ and $h_{  G}$ respectively,
 we have $h_{F\cup G}$ to be a homomorphism from $F\cup G$ to $k^{<\omega}$ that preserves length.
By the choice of $\rho_{\sigma}$ and the definition of $\h{f}$, we know that for every node $\tau\in G$ which is in the cone above $\rho_{\sigma}$,
there exists a $\xi\in f(\tau)\cap [\t{\zeta}_\sigma]^\preceq$
 such that $h_G(\tau)$ is induced by $ \eta_\xi$ on $G$.
It follows that  $h_{F\cup G}(\tau) = \zeta_\sigma^\smallfrown h_G(\tau)$ is induced by $\xi\in f(\tau)$
on $F\cup G$. This proves (i).

(ii). Given a  homomorphism
 $h:F\rightarrow k^{<\omega}$, an antichain $B\subseteq T$ with
 $B\cap [\tau]^\prec\ne\emptyset$ for each $\tau\in \ell(F)$, we can decide
 uniformly in $f_0\oplus (T\oplus f_1\oplus\cdots\oplus f_n)'$
 whether  for each $\sigma\in F$, $h(\sigma)$ is induced by
 some $\t{\zeta}_\sigma\ =
 (\t{\zeta}_{\sigma,0},\dots,\t{\zeta}_{\sigma,n})\in f(\sigma)$ with the following properties:
 \begin{itemize}
 \item For
 $1\leq m\leq n$,
 $\tau\in\ell(F)$ and
 $\rho\in T\cap [\tau]^\preceq\cap [B]^\preceq$, one has
 $\t{\zeta}_{\tau,m}\in f_m(\rho)\uhr |\tau|_T$;

 \item For  $\tau\in\ell(F)$ and
  $\rho\in [\tau]^\preceq\cap B$,
 one has $f_0(\rho)\uhr |\tau|_T = \{\t{\zeta}_{\tau,0}\}$.
 \end{itemize}

Now if $F$ is homogeneous for some \conciseversion\ of $f$, then such
 $h,B$ must exist. Thus $B$ is computable uniformly in
 $F, f_0\oplus (T\oplus f_1\oplus\cdots\oplus f_n)'$. Similarly as in the proof of
 (i), we define, for  $0\leq m\leq n$,
 $$
 \h{f}_m(\rho') = \big\{
 \xi\uhr[|\rho|_T, |\rho'|_T]:
 \text{ for some }\tau\in\ell(F), \rho\in B\cap [\tau]^\preceq,
 \rho'\succeq\rho\text{ and }
 \xi\in f_m(\rho')\cap [\t{\zeta}_{\tau,m}]^\preceq
 \big\}.
 $$
 Note that weak stability of $f_0$ carries over to $\h f_0$.
 Then  $\h{f} = \h{f}_0\otimes\cdots\otimes \h{f}_n$ is the desired
 $k$-\treesplit\ on $T$.
\end{proof}

\begin{lemma} \label{lem3.9vi}
Let  $f$ be a $k$-tree-split on a \twobranching\ \treeset\ $T$
and fix $n$. Then
\begin{enumerate}[(i)]
\item [{\rm (i)}]  There exists
a finite perfect tree $F\subseteq T$   such that $|F|>n$
 and $F$ is homogeneous for some \conciseversion\ of $f$.

 \item [{\rm (ii)}] If  $f$ is of the  form $f_0\otimes \cdots \otimes f_n$
 where $f_0$ is weakly stable, then the set
  $$\big\{F': F'\text{ is a finite perfect tree and homogeneous for
 some \conciseversion\ of }f\big\}
 $$
 is c.e.~uniformly in
$f_0\oplus (T\oplus f_1\oplus\cdots\oplus f_m)'$.
 Thus such an $F$ is computable uniformly in $f_0\oplus (T\oplus f_1\oplus\cdots\oplus f_m)'$.

 \end{enumerate}
\end{lemma}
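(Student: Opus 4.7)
The plan for (i) is to construct $F$ by iteratively doubling the leaves of a growing finite perfect subtree of $T$. First I will invoke Lemma~\ref{lem3.9ii} to pass to a \conciseversion\ $\t{f}$ of $f$ on $T$; I then build $F$ homogeneous for $\t{f}$. Initialize $F_0 = \{\rho_\emptyset\}$ for any chosen root $\rho_\emptyset \in T$, equipped with some $\zeta_{\rho_\emptyset} \in \t{f}(\rho_\emptyset)$. At a typical stage I have a finite perfect $F_m \subseteq T$ together with a coherent assignment $\rho \mapsto \zeta_\rho \in \t{f}(\rho)$ witnessing that $F_m$ is homogeneous for $\t{f}$. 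For each $\sigma \in \ell(F_m)$, the \concised\ property of $\t{f}$ supplies a base $\sigma' \succeq \sigma$ in $T$ such that $\zeta_\sigma \in \t{f}(\rho'')\uhr |\sigma|_T$ for every $\rho'' \in T \cap [\sigma']^\preceq$. Since $T$ is \twobranching , I pick two incompatible extensions $\tau_\sigma^0, \tau_\sigma^1 \succeq \sigma'$ in $T$, and for each $i$ the membership $\zeta_\sigma \in \t{f}(\tau_\sigma^i)\uhr |\sigma|_T$ yields some $\zeta_{\tau_\sigma^i} \in \t{f}(\tau_\sigma^i)$ with $\zeta_{\tau_\sigma^i}\uhr |\sigma|_T = \zeta_\sigma$. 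Setting $F_{m+1} = F_m \cup \{\tau_\sigma^i : \sigma \in \ell(F_m),\, i \in \{0,1\}\}$ and extending the assignment by the new $\zeta_{\tau_\sigma^i}$'s produces a larger finite perfect subtree of $T$ whose homogeneity witness for $\t{f}$ extends the previous one. The number of leaves doubles each stage, so $|F_m|>n$ after finitely many iterations.

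For (ii), iterating Lemma~\ref{lem3.9iv}(i) shows that $f = f_0 \otimes \cdots \otimes f_n$ is itself \concised, so ``homogeneous for some \conciseversion\ of $f$'' reduces to ``homogeneous for $f$''. It suffices to show that, given a finite perfect $F' \subseteq T$, the existence of a homogeneity witness for $f$ on $F'$ is $\Sigma^0_1$ in the oracle $f_0 \oplus (T \oplus f_1 \oplus \cdots \oplus f_n)'$, uniformly in $F'$. Such a witness is a path-coherent assignment $\rho \mapsto (\zeta_\rho^0, \ldots, \zeta_\rho^n)$ on $F'$ with $(\zeta_\rho^0, \ldots, \zeta_\rho^n) \in f(\rho)$ for every $\rho \in F'$. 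Coherence is decidable from the candidate alone; the cross-product membership unpacks as the atomic conditions $\zeta_\rho^i \in f_i(\rho)$ (each decidable given $f_0, f_1, \ldots, f_n$, all available in the oracle) conjoined with $\exists \rho' \in T \cap [\rho]^\preceq\, \forall \rho'' \in T \cap [\rho']^\preceq\, \forall i\,[\zeta_\rho^i \in f_i(\rho'')\uhr |\rho|_T]$. The inner $\forall$ is $\Pi^0_1$ in $T \oplus f_0 \oplus \cdots \oplus f_n$, hence decidable from $f_0 \oplus (T \oplus f_1 \oplus \cdots \oplus f_n)'$; the outer existential over $\rho'$ together with the finite search over candidate assignments on $F'$ yields a c.e.~enumeration in the stated oracle. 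Nonvacuity for every $n$ follows from (i), so enumerating until an $F$ with $|F|>n$ appears computes the required tree uniformly in the oracle and in $n$.

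The main obstacle is the complexity accounting in (ii): the cross-product condition is naturally $\Sigma^0_2(T \oplus f_0 \oplus \cdots \oplus f_n)$, which would ordinarily require the jump over $f_0$ as well. What rescues us is that $f_0$ is supplied as a direct oracle parameter rather than buried under the jump, so the inner $\Pi^0_1$ clause is decidable with $f_0$-computable parameters relative to $(T \oplus f_1 \oplus \cdots \oplus f_n)'$. The weak stability of $f_0$ does not appear essential for c.e.-ness in the present lemma, but it will be crucial when this enumeration is plugged into extension arguments (as in Lemma~\ref{lem3.9x}(ii)) that keep the complexity of the constructed \conciseversion\ under control.
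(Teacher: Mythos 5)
Your proof of (i) is correct and is essentially a direct, unrolled version of what the paper does by iterating Lemma~\ref{lem3.9x}: pass to a \conciseversion\ $\t{f}$, at each leaf use the \concised\ property to find a cone where the assigned color vector persists, and pick two incompatible extensions in $T$ (available by $2$-\branching) as the new leaves. The coherence bookkeeping checks out and the doubling gives $|F_m|>n$ eventually.

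Part (ii), however, has a genuine gap, and it is exactly at the spot you wave off. You write that the inner clause $\forall\rho''\in T\cap[\rho']^\preceq\,\forall i\,[\zeta_\rho^i\in f_i(\rho'')\uhr|\rho|_T]$ is ``$\Pi^0_1$ in $T\oplus f_0\oplus\cdots\oplus f_n$, hence decidable from $f_0\oplus(T\oplus f_1\oplus\cdots\oplus f_n)'$.'' The ``hence'' does not follow: a $\Pi^0_1(T\oplus f_0\oplus\cdots\oplus f_n)$ predicate is decidable in $(T\oplus f_0\oplus\cdots\oplus f_n)'$, and in general this lies strictly above $f_0\oplus(T\oplus f_1\oplus\cdots\oplus f_n)'$. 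Having $f_0$ as a ``direct oracle parameter'' lets you answer finitely many queries to $f_0$, not an unbounded universal quantifier over $\rho''$ whose matrix consults $f_0(\rho'')$. So the complexity accounting is simply wrong as stated, and the heuristic in your final paragraph (``what rescues us is that $f_0$ is supplied as a direct oracle parameter'') does not rescue it.

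The missing ingredient is precisely the weak stability of $f_0$, which you explicitly declare inessential. The paper's argument (Lemma~\ref{lem3.9x}~(ii), to which the proof of this lemma is deferred) introduces a finite antichain $B$ and splits the verification into two pieces: a $\Pi^0_1(T\oplus f_1\oplus\cdots\oplus f_n)$ piece for the coordinates $1,\dots,n$ (decidable from the jump in the stated oracle) and a \emph{finite} check that $f_0(\rho)\uhr|\tau|_T=\{\t{\zeta}_{\tau,0}\}$ at the finitely many $\rho\in B$ (decidable from $f_0$ alone). This finite check is sufficient because, by weak stability of $f_0$ together with Lemma~\ref{lem11}~(i), once $f_0(\rho)\uhr|\tau|_T$ collapses to a singleton at some $\rho$, it stays that singleton for every $\rho'\succeq\rho$; so the would-be $\Pi^0_1$ quantifier over the whole cone above $\rho$ is discharged by one query at $\rho$. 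Without weak stability of $f_0$ there is no such collapse to a finite check, and the set in (ii) has no reason to be c.e.~in $f_0\oplus(T\oplus f_1\oplus\cdots\oplus f_n)'$. Your argument can be repaired by incorporating $B$ and the weak-stability reduction as in Lemma~\ref{lem3.9x}~(ii), but as written the key step is unjustified.
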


\begin{proof}
Note that  any singleton node  is homogeneous for any \conciseversion\ of
$f$. Then one obtains $F$ by  repeatedly applying Lemma \ref{lem3.9x}.
The proof of item (ii) is the same as that of Lemma \ref{lem3.9x}  (ii).
\end{proof}

We now show that $k$-$\mathsf{TSP}$ admits strong avoidance of $1$-enumeration and bounded enumeration.

\begin{theorem} \label{th3}
Let $D\subseteq \omega$. Assume that $S\subseteq 2^{<\omega}$ does not admit a $D$-computable $1$-enumeration (resp.~bounded enumeration).
For any $k^*\in\omega$ and $k^*$-\treesplit\ $f^*$ on $2^{<\omega}$,
there exists an infinite perfect \treeset\ $G$   homogeneous for  $f^*$
such that $S$ does not admit a $D\oplus G$ computable $1$-enumeration (resp.~bounded enumeration).
Moreover, if $f^*\leq_T D'$ is weakly stable and $S$ is $\Pi^0_1$ in $D$, then $G$ can be chosen so that $D\oplus G$ is low relative to $D$.
\end{theorem}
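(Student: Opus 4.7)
I would prove the theorem by a Mathias-style forcing whose conditions $c = (F_c, \hat{f}_c)$ consist of a finite perfect tree $F_c$ homogeneous for a refinement of $f^*$, together with a refined $k^*$-tree-split $\hat{f}_c$ on $[B_c]^\preceq$ (for some antichain $B_c$ meeting each leaf-cone of $F_c$) supplied by Lemma \ref{lem3.9x}, with the hallmark property that any infinite perfect $G\subseteq[B_c]^\preceq$ homogeneous for $\hat{f}_c$ extends $F_c$ to a tree homogeneous for $f^*$. Extensions of $c$ are obtained by picking a larger homogeneous $F_{c'}\supseteq F_c$ inside $[B_c]^\preceq$ and reapplying Lemma \ref{lem3.9x} to produce $\hat{f}_{c'}$; unbounded extendability follows from Lemma \ref{lem3.9vi}, so a sufficiently generic filter yields an infinite perfect $G$ homogeneous for $f^*$.

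\textbf{Meeting $R_e$.} The main engine is, for each $e$, an extension lemma forcing the requirement $R_e$ that $\Psi_e^{D\oplus G}$ is not a $1$-enumeration of $S$. Given a condition $c$, I would argue by a Liu-style dichotomy mirroring the proof of Proposition \ref{prop4}: either there exists an extension $c'\leq c$ and some $m$ such that $\Psi_e^{D\oplus F_{c'}}(m)$ already witnesses a $1$-enumeration failure (output a non-singleton at level $m$, output not in $S\cap 2^m$, or no further extension converges at $m$), in which case we take $c'$; or else, for every valid extension $c'$ and every $m$ with $\Psi_e^{D\oplus F_{c'}}(m)\!\downarrow$, the output is a singleton inside $S\cap 2^m$. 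In the latter case, searching for the first convergent extension defines a $1$-enumeration $g$ of $S$, which Lemma \ref{lem3.9vi}(ii) renders $D$-computable, contradicting the hypothesis on $S$. Iterating produces the required $G$; the bounded-enumeration variant is essentially identical, parameterized by the bound $\ell$.

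\textbf{Main obstacle and the ``moreover'' clause.} The chief obstacle is ensuring that the Case B $1$-enumeration is truly $D$-computable and not merely $(D\oplus f^*)$-computable. This is managed by maintaining along the forcing the invariant that each $\hat{f}_c$ is presented as a cross product $f_0\otimes\cdots\otimes f_n$ with $f_0$ weakly stable; this form is preserved by Lemma \ref{lem3.9x}(ii), and the factor $f_0$ absorbs enough of the approximation that Lemma \ref{lem3.9vi}(ii)'s effective enumeration of homogeneous extensions requires only a $D$-oracle. For the ``moreover'' assertion, where $f^*\leq_T D'$ is already weakly stable and $S\in\Pi^{0,D}_1$, the entire tree of forcing conditions becomes a $\Pi^{0,D}_1$-class (via the $\Pi^{0,T}_1$-class structure in Lemma \ref{lem3.9vii}(i)), and the low basis theorem relative to $D$ selects a generic $G$ with $D\oplus G$ low over $D$; the $\Pi^0_1(D)$-hypothesis on $S$ is used to decide the Case A/B alternative at the appropriate quantifier complexity.
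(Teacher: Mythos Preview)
Your dichotomy has a genuine gap: it collapses what the paper treats as three cases into two, and in doing so it loses the mechanism that makes both divergence-forcing and the $D$-computability claim work. In your Case~A you list ``no further extension converges at $m$'' as one possible outcome, but you give no way to \emph{produce} a condition with this property when the current $\hat f_c$ does not already have it. The paper's Case~2 handles exactly this: one considers, for each $n$ and $V\subseteq 2^n$, the $\Pi^{0,D}_1$-class $Q^n_V$ of \emph{all} $k$-tree-splits $f$ on $[B_e]^\preceq$ such that every finite perfect $F'\succeq F_e$ homogeneous for $f$ with $\Psi_e^{D\oplus F'}(n)\!\downarrow$ outputs into $V$. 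If there are $V_0,\dots,V_{r-1}$ with $\bigcap_m V_m=\emptyset$ and each $Q^n_{V_m}\neq\emptyset$, one picks $g_m\in Q^n_{V_m}$ and passes to the cross product $f_{e+1}=f_e\otimes g_0\otimes\cdots\otimes g_{r-1}$; any $G$ homogeneous for $f_{e+1}$ is homogeneous for every $g_m$, forcing $\Psi_e^{D\oplus G}(n)\!\uparrow$. Your conditions only allow $\hat f_{c'}$ to arise from $\hat f_c$ via Lemma~\ref{lem3.9x}, so you have no access to these auxiliary $g_m$.

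The same omission breaks your $D$-computability argument. You invoke Lemma~\ref{lem3.9vi}(ii) to claim the Case~B search is $D$-recursive, but that lemma yields enumerability only relative to $f_0\oplus(T\oplus f_1\oplus\cdots\oplus f_n)'$, and nothing you have said makes this $\leq_T D$; the assertion that ``the factor $f_0$ absorbs enough of the approximation'' is unsupported. The paper's Case~3 avoids this entirely: because $Q^n_V$ quantifies over \emph{all} tree-splits (not just $\hat f_c$), it is a $\Pi^{0,D}_1$-class in the variable $f$, so ``$Q^n_V=\emptyset$'' is $\Sigma^{0,D}_1$, and one searches $D$-recursively for a stage $s$ and a $\gamma_n$ lying in every $V$ with $Q^n_V[s]\neq\emptyset$. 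Finally, for the ``moreover'' clause, the paper does not apply the low basis theorem to the forcing as a whole; rather, it applies it to each $Q^n_{V_m}$ to select $D$-low $g_m$'s, adds explicit jump-forcing requirements $\mathcal N_e$, and verifies the entire construction is $D'$-recursive. Your sketch of a single low-basis application to ``the tree of forcing conditions'' does not correspond to a well-defined $\Pi^{0,D}_1$-class.
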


\begin{proof} Fix a $k^*$-tree-split $f^*$ on $2^{<\omega}$.
We prove the theorem for the case of avoiding  $1$-enumeration.
The proof for avoidance of bounded enumeration is similar.
%
%A condition in our notion of forcing is a triple $(F,\vec{\rho},f)$ where $F$ is finite perfect \treeset,
%$\vec{\rho}\succeq \leaf(F)$ and $f$ is a $k'$-\treesplit\ defined on $[\vec{\rho}]^\preceq$ for some $k'$.
%Moreover, if $f^*\leq_T D'$ is stable and $S$ is $\Pi^0_1$ in $D$, then we also require that $f$ is of the form $f^s\otimes f^l$,
%where $f^s\leq_T D'$ is stable; and $f^l$ is the the ``low'' part, which has the form $\otimes_{m\leq n} f^l_{m}$,
%such that $D\oplus (\oplus_{m\leq n}f^l_{m})$ is low relative to $D$, and an index for $(D\oplus(\oplus_{m\leq n} f^{l}_{m}))'\le_T D'$
%as well as an index for $f^s\le_T D'$ are included in the condition\footnote{An index for $X\leq_T Y$ refers to the index of a
%Turing functional $\Phi$ such that $\Phi^Y=X$.}.

We apply the idea of Mathias forcing again and build a sequence of tuples $(F_e,B_e, f_e)_{e\in \omega}$ with the following properties:
\begin{itemize}
\item   $F_e$ is a finite perfect tree;
\item $F_{e+1}\succeq F_e$;
\item $B_e$ is an antichain  such that $B_e \subset [\ell(F_e)]^\preceq$
 and every leaf of $F_e$ has an extension in $B_e$;
\item  $f_e$ is a $k$-\treesplit \ on $[B_e]^\preceq$ for some $k$.
\end{itemize}

We ensure that for every $G\subseteq [B_e]^\preceq$ homogeneous for $f_{e+1}$, $F_e\cup G$  is homogeneous
 for $f_e$, and $F_{e+1}\setminus F_e$ is homogeneous for $f_e$.
Let $(F_0,B_0,f_0) = (\emptyset, \{\varepsilon\}, f^*)$.
The generic object $G^*$ will be $\bigcup_e F_e$. Clearly
$G^*$  is
 homogeneous for $f^*$.  We will  make $G^*$
satisfy the following  requirements:
\begin{itemize}
  \item $\mcal{P}_e$: $|F_e|>e$. (This is to ensure that $G^*$ is infinite and perfect.)
\end{itemize}
\begin{itemize}
\item $\mcal{R}_e$: For some $m$, either $\Psi_e^{D\oplus G^*}(m)\downarrow\notin S\cap 2^m$ or $\Psi^{D\oplus G^*}(m)\uparrow$.
(This is to make $D\oplus G^*$ avoid computing $1$-enumeration of $S$.)
\end{itemize}

 Suppose that $(F_e,B_e,f_e)$ is defined.
We   construct $(F_{e+1},B_{e+1},f_{e+1})$ that
forces $\mcal{R}_e$. For each $n$ and $V\subseteq 2^{n}$,
consider the following set of $k$-\treesplit s  on $[B_e]^\preceq$:
\begin{eqnarray*}
Q^n_V &=& \big\{f: f \text{\ is a $k$-\treesplit \ on $[B_e]^\preceq$,} \\
&&\hspace{1cm} \text{and for every finite perfect \treeset\ }F'\succeq F_e \text{\ such that  $F'\setminus F_e\subset  [B_e]^\preceq$ and homogeneous}\\
&&\hspace{1cm} \text{for $f$, if $\Psi_e^{D\oplus F'}(n)\downarrow$ then\ } \Psi_e^{D\oplus F'}(n)\in V \big\}.
\end{eqnarray*}
It follows from Lemma \ref{lem3.9vii}  that $Q^n_V$ is a $\Pi_1^{0,D}$-class.
We consider three cases.

\medskip

\noindent\textbf{Case 1}. For some $n\in\omega$, $Q^n_{S\cap 2^n}=\emptyset$.

Let  $\t{f}$ be a \conciseversion\ of $f_e$ which exists by Lemma \ref{lem3.9ii}.
Suppose $F$ witnesses $\t{f}\notin Q^n_{S\cap 2^n}$, i.e.~$F\succeq F_e$ is finite  and perfect,
$F\setminus F_e \subseteq [B_e]^\preceq$, $F\setminus F_e$ is homogeneous for $\t{f}$,
$\Psi_e^{D\oplus F}(n)\downarrow$ and $\Psi_e^{D\oplus F}(n)\notin S\cap 2^n$.
Let $F_{e+1}=$(the canonically least such) $F$.
Apply Lemma \ref{lem3.9x} to obtain $B_{e+1}$ and  a $k$-\treesplit\ $f_{e+1}$ on $[B_{e+1}]^\preceq$ such that
\begin{enumerate}
\item [(1)] For every $\sigma\in \leaf(F_{e+1})$ there is a $\rho \in B_{e+1}$ with $\rho\succeq \sigma$;
\item [(2)] For every \treeset\ $G\subseteq [B_{e+1}]^\preceq$ homogeneous for $f_{e+1}$, $F_{e+1}\cup G$ is homogeneous for $f_e$.
\end{enumerate}
Thus the condition $(F_{e+1},B_{e+1},f_{e+1})$ satisfies $\mcal{R}_e$ (by fulfilling the $\Sigma^0_1$-clause).

\medskip

\noindent\textbf{Case 2}. There exist $n\in\omega$ and a family of finite sets $V_0,\dots,V_{r-1}\subseteq 2^{n}$ such that
$\bigcap_{0\leq m\leq r-1} V_m =\emptyset$ and $Q^n_{V_m}\ne\emptyset$ for each $0\leq m\leq r-1$.

For $0\le m\le r-1$, select a $g_m\in Q^n_{V_m}$. Let $\h{f} =f_e\otimes g_0\otimes\cdots\otimes g_{r-1}$.
By Lemma \ref{lem3.9iv} (i), $\h{f}$ is a $k^{r+1}$-\treesplit.
Fix a perfect \treeset\ $G\succeq F_e$ such that $G\setminus F_e\subseteq [B_e]^\preceq$.
By Lemma \ref{lem3.9iv} (ii),  if  $G\succ F_e$ and $G\setminus F_e$ is homogeneous for $\h{f}$,
then $G\setminus F_e$ is homogeneous for $g_m$ for each $m\leq r-1$.
Now if  $G\setminus F$ is homogeneous for each $g_m$, then whenever $\Psi_e^{D\oplus G}(n)\downarrow$,
it is a member of $\bigcap_{0\leq m\leq r-1} V_m$ so that $\bigcap_{0\leq m\leq r-1} V_m\ne \emptyset$ which contradicts the assumption.
Thus $\Psi_e^{D\oplus G}$ is not total if  $G\setminus F_e$ is homogeneous for $\h{f}$.
Hence let $F_{e+1}=F_e$, $B_{e+1}=B_e$ and $f_{e+1}= f_e\otimes g_0\otimes\cdots\otimes g_{r-1}$.
The condition $(F_{e+1},B_{e+1},f_{e+1})$ satisfies $\mcal{R}_e$ (by fulfilling the $\Pi^0_1$-clause).

\medskip

\noindent\textbf{Case 3}. Otherwise.

We show that there exists a $D$-computable $1$-enumeration $g$ of $S$ which contradicts the hypothesis.
For every $n$ find (the least) stage $s$ and (the corresponding least string) $\gamma_n$ in the computation of $Q^n_V[s]$  such that
for every $V\subseteq 2^n$, either $Q^n_V[s]=\emptyset$ or $\gamma_n\in V$. Then define $g(n) = \{\gamma_n\}$.
Since $Q^n_V $ is $\Pi_1^{0,D}$, ``$Q^n_V=\emptyset$'' is $\Sigma^{0,D}_1$, hence $g\leq_T D$.
By the failure of Case 2, there is some $\gamma\in \bigcap\{V\subseteq 2^n: Q^n_V\ne\emptyset\}$.
By the failure of Case 1, $S\cap 2^n\in \bigcap\{V\subseteq 2^n: Q^n_V[s]\ne\emptyset\}$.
Thus, $g(n)\in S\cap 2^n$.

\medskip

Satisfying $\mcal{P}_e $ is similar to  Case 1 for satisfying $\mcal{R}_e$.
\bigskip

We now prove the ``moreover'' part. Assuming $f^*\leq_T D'$ is weakly stable and $S$ is $\Pi^{0,D}_1$.
The condition we use is still $(F,B,f)$ but with the additional assumption that $f$ is of the form
 \begin{align}\label{tt22wklsection3eq0}
   g\otimes g_{0}\otimes \cdots\otimes g_{s}
\text{ (for some $s$) where }g\leq_T D'
\text{ is weakly stable and }
g_{0}\oplus\cdots\oplus g_{s}
\text{ is } D\text{-low},
\end{align}
and an index for verifying  $(g_{0}\oplus\cdots\oplus g_{s})'\leq_T D'$ is incorporated in the tuple.
We modify the construction to ensure  $(D\oplus G)'\leq_T D'$. This is achieved by
 adding a lowness requirement to ``force the jump'':
\begin{itemize}
\item $\mcal{N}_e$: Either $\Psi_e^{D\oplus F_{e+1}}(e)\downarrow$ or
 for all finite perfect  trees $F'\succeq F_{e+1}$
 with $F'\setminus F_{e+1}$  homogeneous for $f_{e+1}$, $\Psi_e^{D\oplus F'}(e)\uparrow$.
\end{itemize}
Note that in Case 1, by Lemma \ref{lem3.9vi}  (ii) (and Lemma \ref{lem3.9x}  (ii) resp.),
the tree $F_{e+1}$ (and the set $B_{e+1}$ resp.) can be computed uniformly in $g_e\oplus (g_{e,0}\oplus\cdots\oplus g_{e, s_e})'$, where $f_e= g_e\otimes g_{e,0}\otimes\cdots\otimes g_{e, s_e}$,
and by Lemma \ref{lem3.9x}  (ii),  $f_{e+1}$ also satisfies
(\ref{tt22wklsection3eq0}).
In Case 2, instead of selecting an arbitrary
$g_m$ from $ Q^n_{V_m}$, we  apply the  Low Basis Theorem to obtain  a $g_{e+1,m}\in Q^n_{V_m}$ so that
 $g_{e,0}\oplus\cdots\oplus g_{e,s_e}\oplus g_{e+1, 0}\oplus\cdots\oplus g_{e+1, r-1}$
is $D$-$low$. Thus $f_{e+1}=f_e\otimes g_{e+1,0}\otimes\cdots\otimes g_{e+1, r-1}$ also satisfies (\ref{tt22wklsection3eq0}) (where now $s=r-1$).
Moreover, whether i Case 1 or Case 2 holds is  also decidable  by $D'$.

Finally, we show how to satisfy the requirement $\mcal{N}_e$
(which is similar to  satisfying $\mcal{R}_e$):
Consider the $\Pi^{0,D}_1$-class
\begin{eqnarray*}
Q_e &=& \big\{f: f \text{\ is a $k$-\treesplit\ on $[B_e]^\preceq$, and for every finite perfect \treeset\ }F\succeq F_e,\\
&&\text{if $F\setminus F_e\subseteq [B_e]^\preceq$ and $F\setminus F_e$ is homogeneous for $f$, then\ } \Psi_e^{D\oplus F}(e)\uparrow \big\}.
\end{eqnarray*}
Depending on whether  $Q_e=\emptyset$, one can repeat the argument in Cases 1 and 2 above to arrive at the  desired extension.
\end{proof}

\begin{corollary} \label{coro103}
$\tcoh$ admits avoidance of $1$-enumeration and bounded enumeration.
Moreover, if $\tt_{k'}^1$ admits strong avoidance of bounded enumeration for all $k'\in\omega$, then so does $\tcoh$.
\end{corollary}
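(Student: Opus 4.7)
The plan is to combine the avoidance properties already established for the two components of the decomposition given in Corollary \ref{decomposition}. Fix $D\subseteq\omega$, a set $S\subseteq 2^{<\omega}$ that admits no $D$-computable $1$-enumeration (resp.~bounded enumeration), and a $D$-computable $k$-coloring $C:[2^{<\omega}]^2\to k$. I will produce an infinite perfect subtree $T^{*}\subseteq 2^{<\omega}$ such that $C\uhr [T^{*}]^2$ is stable and $D\oplus T^{*}$ computes no $1$-enumeration (resp.~bounded enumeration) of $S$, in two stages.

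In the first stage, apply Proposition \ref{prop4} to obtain an infinite perfect subtree $T_1\subseteq 2^{<\omega}$ on which $C$ is weakly stable and such that $D\oplus T_1$ computes no $1$-enumeration (resp.~bounded enumeration) of $S$. Transport $C\uhr [T_1]^2$ through a $T_1$-computable isomorphism $\phi:2^{<\omega}\to T_1$ to obtain a weakly stable coloring $C_1:[2^{<\omega}]^2\to k$, which is computable from $C\oplus T_1$. By Proposition \ref{prop2} there is a weakly stable $k$-tree-split $f_{C_1}$ on $2^{<\omega}$ with $f_{C_1}\leq_T C_1'$, such that every infinite perfect subtree of $2^{<\omega}$ homogeneous for $f_{C_1}$ makes $C_1$ stable on it.

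In the second stage, invoke Theorem \ref{th3} (which is stated in the strong-avoidance form and so imposes no complexity restriction on the input tree-split) with the oracle taken to be $D\oplus T_1$ and the tree-split $f_{C_1}$. This produces an infinite perfect tree $G\subseteq 2^{<\omega}$ homogeneous for $f_{C_1}$ such that $D\oplus T_1\oplus G$ computes no $1$-enumeration (resp.~bounded enumeration) of $S$. By the conclusion of Proposition \ref{prop2} the coloring $C_1$ is stable on $G$, hence $C$ is stable on $\phi(G)$; since $\phi(G)$ is computable from $T_1\oplus G$, setting $T^{*}=\phi(G)$ yields a $\tcoh$-solution of $C$ with $D\oplus T^{*}$ computing no such enumeration of $S$.

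For the ``moreover'' clause, the same two-stage argument applies verbatim, except that the first stage invokes the ``moreover'' part of Proposition \ref{prop4}: under the hypothesis that $\tt^{1}_{k'}$ admits strong avoidance of bounded enumeration for every $k'\in\omega$, $\wtcoh$ itself admits strong avoidance of bounded enumeration, so $C$ need not be $D$-computable. The second stage is unchanged and already operates in the strong-avoidance regime. I expect the main obstacle to be only a bookkeeping one: verifying that the transport via $\phi$ between $T_1$ and $2^{<\omega}$ does not reintroduce any $1$-enumeration (resp.~bounded enumeration) into the oracle and preserves weak stability of the coloring; both are routine because $\phi$ and $\phi^{-1}$ are $T_1$-computable and respect the tree structure.
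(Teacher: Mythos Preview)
Your proposal is correct and follows essentially the same approach as the paper, which simply cites Proposition~\ref{prop4}, Proposition~\ref{prop2}, and Theorem~\ref{th3} in sequence. You have merely spelled out the transport-by-isomorphism step and the bookkeeping that the paper leaves implicit.
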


\begin{proof}
The first part, avoidance of $1$-enumeration, follows immediately from Proposition \ref{prop4}, Proposition \ref{prop2} and Theorem \ref{th3}.
Avoidance of bounded enumeration can be proved in the same way as that of $1$-enumeration.
The ``moreover'' part follows from Proposition \ref{prop4}.
\end{proof}

\subsection{$\tcoh$ and other combinatorial principles}
The following two propositions are shown using methods similar to those in the proof of Theorem \ref{th3}.

%%Does it need to go through wCTT and k-split as well?
\begin{proposition} \label{prop5}
$\tcoh$ preserves non-$\Sigma^0_1$~definition, i.e.~for every $D\subset\omega$, every $X$ that is not $\Sigma^0_1$ in $D$,
and every $D$-computable coloring $C: [2^{<\omega}]^2\rightarrow k$, there exists a $\tcoh$ solution $G$ of $C$
such that $X$ is not $\Sigma^0_1$ in  $D\oplus G$.
\end{proposition}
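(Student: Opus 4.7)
The plan is to follow the pattern of Corollary \ref{coro103} and prove Proposition \ref{prop5} by splitting along the decomposition $\tcoh = \wtcoh + k\text{-}\mathsf{TSP}$ from Corollary \ref{decomposition}, establishing a ``preservation of non-$\Sigma^0_1$ definition'' property for each of $\wtcoh$ and $k$-$\mathsf{TSP}$ separately, and combining them through Proposition \ref{prop2}. Given a $D$-computable coloring $C$, the $\wtcoh$-preservation first produces a weakly stable subtree $T$ with $X$ still not $\Sigma^0_1$ in $D\oplus T$; the canonical tree-split $f_C\le_T(D\oplus T)'$ of Proposition \ref{prop2} is then weakly stable on $T$; and the $k$-$\mathsf{TSP}$-preservation applied with base $D\oplus T$ yields an infinite perfect $G\subseteq T$ homogeneous for $f_C$ such that $X$ is not $\Sigma^0_1$ in $D\oplus G$. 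By Proposition \ref{prop2} this $G$ is a $\tcoh$-solution of $C$, as desired.

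For the $\wtcoh$ part I would mimic Proposition \ref{prop4}, using the same Mathias-style conditions $(F_e,X_e)$ with $X_e$ a $D$-computable perfect reservoir, but replacing the enumeration-avoidance requirement by $\mcal{R}_e\colon W_e^{D\oplus G}\neq X$. Given $(F_e,X_e)$, two sub-cases suffice. First, if there exist a finite perfect $F\succeq F_e$ with $F\setminus F_e\subseteq X_e$ and some $n\notin X$ with $n\in W_e^{D\oplus F}$, take such $F$ as $F_{e+1}$; then $n\in W_e^{D\oplus G}\setminus X$ by monotonicity. Second, if no such pair exists but some $n\in X$ has ``$n\notin W_e^{D\oplus F}$ for every valid extension $F$'', keep $F_{e+1}=F_e$; then $n\in X\setminus W_e^{D\oplus G}$. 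If both sub-cases fail, then
\[
X=\big\{n:(\exists F\succeq F_e)(F\setminus F_e\subseteq X_e\wedge n\in W_e^{D\oplus F})\big\},
\]
which is $\Sigma^{0,D}_1$ since $X_e\leq_T D$, contradicting the hypothesis on $X$.

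For the $k$-$\mathsf{TSP}$ part I would mimic Theorem \ref{th3}, using the same $(F_e,B_e,f_e)$-conditions but replacing the classes $Q^n_V$ with the $\Pi^{0,D}_1$-classes
\[
Q^n=\big\{f:f\text{ is a $k$-tree-split on }[B_e]^\preceq\text{ and }n\notin W_e^{D\oplus F}\text{ for every finite perfect }F\succeq F_e\text{ with }F\setminus F_e\text{ homogeneous for }f\big\}.
\]
Three cases arise. (i) Some $n\notin X$ satisfies $Q^n=\emptyset$: a witnessing extension, refined by Lemma \ref{lem3.9x}, gives $F_{e+1},B_{e+1},f_{e+1}$ forcing $n\in W_e^{D\oplus G}\setminus X$. (ii) Some $n\in X$ satisfies $Q^n\neq\emptyset$: pick any $g\in Q^n$, keep $F_{e+1}=F_e$ and $B_{e+1}=B_e$, and set $f_{e+1}=f_e\otimes g$ via Lemma \ref{lem3.9iv}, which forces $n\in X\setminus W_e^{D\oplus G}$. (iii) Neither: then $X=\{n:Q^n=\emptyset\}$ becomes $\Sigma^{0,D}_1$, contradicting the hypothesis.

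The hard part will be the complexity bookkeeping in case (iii): one must inductively maintain that $B_e$, and a code for $f_e$, is $D$-recursive so that ``$Q^n=\emptyset$'' is uniformly $\Sigma^{0,D}_1$ in $n$. This is handled exactly as in Theorem \ref{th3}, invoking Lemmas \ref{lem3.9x}(ii) and \ref{lem3.9vi}(ii) to compute $B_{e+1}$ and $f_{e+1}$ uniformly from the previous data, and calling on the Low Basis Theorem when picking $g\in Q^n$ should one additionally wish to keep $(D\oplus G)'\leq_T D'$. Combining the two preservation statements through Proposition \ref{prop2} then yields Proposition \ref{prop5}.
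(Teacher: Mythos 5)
Your proposal is correct and matches the paper's intent: the paper's own argument for Proposition \ref{prop5} is a one-paragraph sketch that builds tuples $(F_e,B_e,f_e)$ and directs the reader to ``apply strategies similar to those used in the proofs of Proposition \ref{prop4} and Theorem \ref{th3},'' which is exactly the two-step $\wtcoh$/$k$-$\mathsf{TSP}$ decomposition with Seetapun-style case analyses that you spell out. In each forcing step your third case (neither a diagonalizing extension nor a thinning exists) correctly yields a $\Sigma^{0,D}_1$ definition of $X$, matching the way the paper's enumeration-avoidance argument collapses to a $D$-computable enumeration of $S$ in its Case 3.
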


\begin{proof} We give a sketch of the proof.
We build a sequence of tuples $(F_e,B_e,f_e)$ such that
for every $e$ the $e^{th}$ requirement ($\mcal{R}_e$: $\Psi_e^{D\oplus G}\ne X$) is
forced in the following way:
\begin{itemize}
\item  There is a witness $m$ such that either, $\Phi_e^{D\oplus F_e}(m)\downarrow=1 \neq X(m)=0$ or
for all perfect trees $G\succeq F_e$ with $G\setminus F_e$
homogeneous for $f_e$, $ \neg (\Phi_e^{D\oplus G}(m)\downarrow = 1 \wedge X(m) = 1)$.
\end{itemize}
One  forces $\mcal{R}_e$ by applying   strategies similar to those used in the proofs of Proposition \ref{prop4} and Theorem \ref{th3}.
\end{proof}

\begin{proposition} \label{prop3}
$\tcoh$ preserves countable hyperimmunity. i.e.~for any $D\subset\omega$ and $D$-computable coloring $C: [2^{<\omega}]^2\rightarrow k$,
if $\{X_n: n \in \omega\}$ is a collection of sets each  \hyperimmune\ relative to $D$,
then there exists a $\tcoh$ solution $G$ of $C$ such that $X_n$ is \hyperimmune\ relative to $D\oplus G$ for each $n$.
\end{proposition}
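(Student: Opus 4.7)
The plan is to adapt the Mathias-style forcing of Theorem \ref{th3} (particularly its ``moreover'' part) so that the generic $\tcoh$-solution $G^*$ also preserves the $D$-hyperimmunity of every $X_n$. Conditions are tuples $(F_e, B_e, f_e)$ of the same form as in Theorem \ref{th3}, with $f_e$ in the decomposition (\ref{tt22wklsection3eq0}); the generic object is $G^* = \bigcup_e F_e$. In addition to the requirements $\mcal{P}_e$ ensuring that $G^*$ is infinite and perfect, I dovetail, for each triple $(e,n,k)$, the requirement
\[
\mcal{R}_{e,n,k}: (\exists m > k)\,\bigl[\Phi_e^{D\oplus G^*}(m)\uparrow \text{ or } \Phi_e^{D\oplus G^*}(m) < p_n(m)\bigr],
\]
where $p_n$ is the principal function of $X_n$. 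Satisfying $\mcal{R}_{e,n,k}$ for every $k$ prevents $\Phi_e^{D\oplus G^*}$ from dominating $p_n$, so securing this for all $e$ and $n$ yields the desired conclusion.

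To force $\mcal{R}_{e,n,k}$ from $(F_e, B_e, f_e)$, for each $m > k$ and $v\in\omega$ I consider the $\Pi^{0,D}_1$-class $Q^{m,v}$ of $k$-\treesplit s $f$ on $[B_e]^\preceq$ such that every finite perfect $F' \succeq F_e$ with $F'\setminus F_e \subseteq [B_e]^\preceq$ homogeneous for $f$ satisfies $\Phi_e^{D\oplus F'}(m)\uparrow$ or $\Phi_e^{D\oplus F'}(m)\le v$. The case analysis parallels Theorem \ref{th3}. In Case 1, some valid $F'$ homogeneous for a \conciseversion\ of $f_e$ already satisfies $\Phi_e^{D\oplus F'}(m)\downarrow < p_n(m)$ for some $m > k$; set $F_{e+1} := F'$ and apply Lemma \ref{lem3.9x} to produce $B_{e+1}, f_{e+1}$. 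In Case 2, $Q^{m,v}\ne \emptyset$ for some $m > k$ and $v < p_n(m)$; apply the Low Basis Theorem to select $g\in Q^{m,v}$ that stays $D$-low when joined with the existing low part, and set $f_{e+1} := f_e \otimes g$ (with $F_{e+1} := F_e$, $B_{e+1} := B_e$), forcing $\Phi_e^{D\oplus G^*}(m)\uparrow$ or $<p_n(m)$.

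The main obstacle is Case 3, in which both previous cases fail for every $m > k$; the goal is to derive a contradiction with the $D$-hyperimmunity of $X_n$. Failure of Case 2 (namely $Q^{m,v}=\emptyset$ for every $v < p_n(m)$) implies that the set $A_m := \{v : \exists F' \text{ valid with } \Phi_e^{D\oplus F'}(m)\downarrow = v\}$ is unbounded above, hence non-empty, for every $m > k$; failure of Case 1 forces $A_m\subseteq [p_n(m),\infty)$. Defining $h(m)$ to be the first element of $A_m$ produced by a fixed enumeration yields a total function with $h(m)\ge p_n(m)$ for all $m > k$. The delicate point is ensuring $h\le_T D$ even though $f_e$ itself is only $\le_T D'$: as in the ``moreover'' part of Theorem \ref{th3}, the verification of homogeneity for the weakly stable $D'$-computable component of $f_e$ is absorbed into the $\Pi^{0,D}_1$-class structure so that the remaining enumeration of valid $F'$ is $D$-computable. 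The resulting $h$ is then a $D$-computable function dominating $p_n$, contradicting the $D$-hyperimmunity of $X_n$.
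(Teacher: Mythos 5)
Your overall strategy—Mathias-style forcing with treesplit-encoded reservoirs $(F_e,B_e,f_e)$, $\Pi^{0,D}_1$ classes $Q^{m,v}$ of treesplits, and a three-way case split paralleling Theorem \ref{th3}—is the same as the paper's, but there is a genuine gap in Case 3. You define $h(m)$ as ``the first element of $A_m$ produced by a fixed enumeration,'' where $A_m = \{v : \exists F' \text{ valid with } \Phi_e^{D\oplus F'}(m)\downarrow = v\}$ and ``valid'' requires $F'\setminus F_e$ to be homogeneous for a \conciseversion\ of $f_e$. To enumerate such $F'$ you must recognize homogeneity for the refined treesplit, and $f_e\le_T D'$ only; by Lemma \ref{lem3.9vi}(ii), the set of such $F'$ is c.e. merely in $D'$. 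So this $h$ is $D'$-computable, not $D$-computable, and no contradiction with the $D$-hyperimmunity of $X_n$ follows. The appeal to the ``moreover'' part of Theorem \ref{th3} is misdirected: the lowness decomposition (\ref{tt22wklsection3eq0}) there is used to secure a low solution, not to make the Case 3 enumeration $D$-computable. The $D$-computability in Case 3 of Theorem \ref{th3} (and in the paper's proof of Proposition \ref{prop3}) comes from a different device entirely, which your phrase ``absorbed into the $\Pi^{0,D}_1$-class structure'' gestures at but your $h$ does not actually use.

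The repair follows the paper. Since ``$Q^{m,0}=\emptyset$'' is $\Sigma^{0,D}_1$, you can $D$-computably locate a compactness witness: a finite family $\mcal{F}_m$ of finite perfect trees such that for \emph{every} treesplit $f$ on $[B_e]^\preceq$, some $F\in\mcal{F}_m$ has $F\setminus F_e$ homogeneous for $f$ with $\Phi_e^{D\oplus F}(m)\downarrow$. You never need to decide \emph{which} $F\in\mcal{F}_m$ is homogeneous for a refinement of $f_e$. Define $h(m) := \max\{\Phi_e^{D\oplus F}(m) : F\in\mcal{F}_m, \Phi_e^{D\oplus F}(m)\downarrow\}$. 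Then $h\le_T D$, and since $\mcal{F}_m$ necessarily contains some $F^*$ that \emph{is} homogeneous for a refinement of $f_e$, whose output at $m$ (by the failure of your Case 1) is $\ge p_n(m)$, we get $h(m)\ge p_n(m)$ for all $m>k$, contradicting hyperimmunity. Finally, the paper states the requirement as ``$\Psi_e^{D\oplus G}(m)\downarrow\subseteq\overline{X}_n$ for some $m$, or $\Psi_e^{D\oplus G}$ partial'' with $\Psi_e$ outputting finite sets; then $D_m:=\bigcup_{F\in\mcal{F}_m}\Psi_e^{D\oplus F}(m)$ is directly a $D$-computable array against which hyperimmunity applies, dispensing with the auxiliary parameters $v$ and $k$ and the domination arithmetic altogether. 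Your principal-function formulation is legitimate but buys nothing here, and the compactness-witness observation is the essential ingredient your writeup is missing.
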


\begin{proof} (Sketch)
  Using the method in  Patey \cite{Patey2015Iterativea}, one can turn the preservation of hyperimmunity into  satisfying requirements similar to $\mcal{R}_e$ above.
\begin{itemize}
  \item $\mcal{R}_{e,n}$:
  \text{Either for some }$m$, $\Psi_e^{D\oplus G}(m)\downarrow\subseteq \overline{X}_n$,
  \text{ or }$\Psi_e^{D\oplus G}$\text{ is not total.}
\end{itemize}
  Given $(F_e,B_e,f_e)$, to force $\mcal{R}_{e,n}$,
  for each $m\in\omega$, consider
 \begin{align}\nonumber
  Q_m = \big\{
  f: f\text{ is a }k\text{-\treesplit\ on }[B_e]^\preceq\text{
  and for every finite perfect \treeset\ }F\succeq F_e, \\ \nonumber
  \text{ if }F\setminus F_e\subseteq [B_e]^\preceq
  \text{ and }F\setminus F_e\text{ is homogeneous for }
  f\text{ then }\Psi_e^{D\oplus F}(m)\uparrow
  \big\}.
  \end{align}
  If $Q_m\ne\emptyset$ for some $m$, then select an $\h{f}\in Q_m$
  and $(F_e,B_e,f_e\otimes \h{f})$ forces $\mcal{R}_{e,m}$
  (by fulfilling the $\Pi_1^0$-clause).
  Suppose  $Q_m=\emptyset$ for all $m\in\omega$. Since $X_n$ is hyperimmune,
   there exist an  $m^*$ and a finite set $\mcal{F}$
   of finite perfect trees witnessing $Q_{m^*}=\emptyset$ (by compactness argument)
   such that $\Psi_{e}^{D\oplus F}(m^*)\downarrow\subseteq \overline{X}_n$
   for all $F\in\mcal{F}$.
   Suppose $F^*\in\mcal{F}$ is homogeneous for some \conciseversion\ of
   $f_e$. Then  by Lemma \ref{lem3.9x} (i), for some $B^*$ and $k$-\treesplit\ $\h{f}$,
   $(F^*,B^*,\h{f})$
   is a condition extending $(F_e,B_e,f_e)$.
\end{proof}

\begin{corollary} \label{CTTandWWKL}
Over $\msf{RCA}$, $\tcoh$ does not imply either $\msf{WWKL}$ or $\msf{SRT}_2^2$.
\end{corollary}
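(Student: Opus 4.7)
The plan is to build a countable $\omega$-model $\mathfrak{M} = (\omega, \mcal{S})$ of $\msf{RCA}_0 + \tcoh$ in which $\msf{WWKL}$ and $\msf{SRT}^2_2$ both fail, by an iterated Mathias-style construction driven by the preservation properties already established in this section.

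For the $\msf{WWKL}$ part, I would first fix a set $S\subseteq 2^{<\omega}$ with no computable $1$-enumeration such that the mere existence of a $1$-enumeration of $S$ in $\mcal{S}$ witnesses the failure of $\msf{WWKL}$. A standard way to do this, following the encoding of Liu~\cite{Liu2015Cone}, is to pick $S$ from a Martin--L\"of test $\{U_n\}$ with $\mu(U_1)<1/2$ so that the $\Pi^0_1$ tree $T^*=\{\sigma:[\sigma]\not\subseteq U_1\}$ has measure $>1/2$ and so that any $1$-enumeration of $S$ computes a path through $T^*$; since $S$ has no computable $1$-enumeration, the usual relativization shows that $S$ admits no $D$-computable $1$-enumeration whenever $T^*$ has no $D$-computable path. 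Now build $\mathfrak{M}$: enumerate all candidate pairs $(D,C)$ where $C$ is a $\tcoh$-instance, set $D_0=\emptyset$, and at stage $s$ apply the strong avoidance of $1$-enumeration from Corollary~\ref{coro103} to produce a $\tcoh$-solution $G_s$ of the $s$-th chosen instance such that $D_s\oplus G_s$ does not compute any $1$-enumeration of $S$; let $D_{s+1}=D_s\oplus G_s$, and set $\mcal{S}=\{X:X\leq_T D_s\text{ for some }s\}$. By construction $\mathfrak{M}\models \msf{RCA}_0+\tcoh$ and no $X\in\mcal{S}$ computes a $1$-enumeration of $S$, hence no $X\in\mcal{S}$ is a path through the positive-measure tree $T^*$, so $\msf{WWKL}$ fails in $\mathfrak{M}$.

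For the $\msf{SRT}^2_2$ part, I would run the parallel iteration using Proposition~\ref{prop3} (preservation of countable hyperimmunity) or Proposition~\ref{prop5} (preservation of non-$\Sigma^0_1$ definition) in place of Corollary~\ref{coro103}. Externally, one uses the known fact that $\msf{SRT}^2_2$ does not preserve these properties --- e.g.\ by Hirschfeldt--Jockusch--Kjos-Hanssen--Lempp--Slaman, $\msf{SRT}^2_2$ implies $\msf{DNR}$ over $\msf{RCA}_0$, which destroys the hyperimmunity of a suitable computable family, or equivalently witnesses a non-$\Sigma^0_1$ set becoming $\Sigma^0_1$ over a solution. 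Fix such a witness and iterate $\tcoh$-solutions preserving it, as above, to obtain an $\omega$-model of $\msf{RCA}_0+\tcoh$ in which the witness is preserved in $\mcal{S}$, ruling out $\msf{SRT}^2_2$.

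The main obstacle is the $\msf{WWKL}$ encoding step: setting up $S$ so that $1$-enumeration avoidance genuinely implies absence of paths through a positive-measure tree. This is the Ku\v{c}era-style argument of Liu~\cite{Liu2015Cone} re-cast in terms of bounded enumerations; once it is in place, the two iterations that build $\mathfrak{M}$ are routine applications of Corollary~\ref{coro103} and Propositions~\ref{prop3}--\ref{prop5}, together with standard bookkeeping to close $\mcal{S}$ under Turing reducibility and recursive join.
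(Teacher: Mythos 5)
Your $\msf{WWKL}$ argument is essentially the paper's. Both take a set $S\subseteq 2^{<\omega}$ with no computable $1$-enumeration such that a path through a positive-measure $\Pi^0_1$ tree would compute a $1$-enumeration of $S$, and then iterate Corollary~\ref{coro103} to build an $\omega$-model of $\tcoh$ in which no second-order member computes a $1$-enumeration of $S$. The paper simply takes $S$ itself to be a pruned $\Pi^0_1$-definable tree all of whose paths are $1$-random: then every path $X$ through $S$ gives the $1$-enumeration $n\mapsto\{X\uhr n\}$ directly, with no need for an auxiliary ML test or a separate tree $T^*$. Your encoding via a test and a companion tree is a more elaborate route to the same place; the conclusion goes through either way.

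Your plan for the $\msf{SRT}^2_2$ part has a genuine gap. You correctly name Proposition~\ref{prop3} as the preservation tool, but the mechanism you cite for \emph{why} $\msf{SRT}^2_2$ fails to preserve the relevant property --- namely that $\msf{SRT}^2_2$ implies $\msf{DNR}$ and hence ``destroys hyperimmunity of a suitable computable family'' --- does not work. First, $\msf{DNR}$ has no direct connection to hyperimmunity; computing a $\msf{DNR}$ function does not in general trace any hyperimmune set. Second, and decisively, the paper itself proves that $\tcoh$ implies $\msf{DNR}$, so $\tcoh$ does \emph{not} preserve non-$\msf{DNR}$-ness, and any separation scheme based on $\msf{DNR}$ preservation is a nonstarter for separating $\tcoh$ from $\msf{SRT}^2_2$. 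The argument that actually works is more elementary: fix a $\Delta^0_2$ coloring $C:\omega\to 2$ with both $C^{-1}(0)$ and $C^{-1}(1)$ hyperimmune, and let $\t C$ be the computable stable coloring on $[\omega]^2$ inducing $C$. Iterate Proposition~\ref{prop3} to build an $\omega$-model of $\tcoh$ in which both $C^{-1}(i)$ remain hyperimmune relative to every second-order member. Any $\msf{SRT}^2_2$-solution $G$ of $\t C$ is an infinite subset of some $C^{-1}(i)$; listing $G=\{g_0<g_1<\cdots\}$, the strong array $D_n=\{g_n\}$ is $G$-computable and meets $C^{-1}(i)$ at every $n$, so $C^{-1}(i)$ is not hyperimmune relative to $G$. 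Hence no such $G$ can be in the model, and $\msf{SRT}^2_2$ fails there. You should replace the $\msf{DNR}$ reasoning with this concrete witness and singleton-array observation.
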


\begin{proof}
Let $S$ be a pruned $\Pi^0_1$-definable tree in which every path through $S$ is $1$-random.
Then $S$ does not admit a computable $1$-enumeration and so by Corollary \ref{coro103}
there is a model $\mathfrak{M}$ of $\tcoh$ for which no $X\in \mathfrak{M}$ computes a $1$-enumeration of $S$.
However, every model of $\msf{WWKL}_0$ contains a second order member $X$ that is a path in $S$,
and such an $X$ clearly computes a $1$-enumeration of $S$. It follows that  $\tcoh$ does not imply $\msf{WWKL}_0$.

For the ``$\msf{SRT}_2^2$" part, let $C:\omega\rightarrow 2$ be a $\Delta_2^0$-coloring such that $C^{-1}(i)$ is hyperimmune for $i< 2$.
Let $C$ be induced by the computable stable coloring $\t{C}:[\omega]^2\rightarrow 2$.
By Proposition \ref{prop3}, there exists a model $\mathfrak{M}$ of $\tcoh$ such that relative to every member $G\in \mathfrak{M}$,
$C^{-1}(i)$ is hyperimmune for $i<2$.  On the other hand, any model containing a member $\t{G}$ which is a solution of $\t{C}$ must satisfy the fact
that relative to $\t{G}$ at least one of $C^{-1}(i)$ is not \hyperimmune.
Thus $\tcoh$ does not imply $\msf{SRT}_2^2$.
\end{proof}

%\subsection{Decomposition of  $\tcoh$ and $\mathsf{DNR}$} \label{section2}

  %Moreover, the set of solutions of a  $\msf{C}$ instance is dense and  $\msf{S}$ is usually weak in the sense  of strong avoidance.  The statement of $\msf{C}$ %and $\msf{S}$ should not involve any complexity terminology. In this sense, the decomposition of $\msf{CAC}$ into $\msf{CCAC}$ ($\msf{C}$)
   %and $\msf{SCAC}$ ($\msf{S}$) is not an exact CJS-decomposition. The $\msf{S}$ component of a CJS-decomposition of $\msf{CAC}$
  %should be "Every coloring $C$ of $\mathbb{N}$ \coincide\footnote{A coloring $C$ is \coincide\ with
  %a partial order $\<$ iff for every $x\in C^{-1}(0)$, every $y\> x$, $y\in C^{-1}(0)$.}\ with some computable stable partial order
  %n $\mathbb{N}$ admit an infinite homogeneous set". But this involves complexity terminology and it is unlikely
  %that it can be replaced by combinatorial terminology. On the other hand,  it's easy to see that
  %there is a $\Delta_2^0$ coloring of $\omega$ that is not \coincide\ with any computable partial order.

We end this section with a result relating $\msf{CTT}^2_k$ to the principle $\msf{DNR}$.
Recall that $\mathsf{DNR}$ states: for every partial $\omega$-valued function $g$,
there is a total function $h$ such that for all $e$, if $\Phi_e^g(e)\downarrow$ then $\Phi^g_e(e)\ne h(e)$.
Hirschfeldt, Jockusch, Kjos-Hanssen, Lempp and Slaman \cite{Hirschfeldt2008strength} proved that over $\mathsf{RCA}_0$,
$\mathsf{SRT}^2_2$ implies $\mathsf{DNR}$ whereas the cohesive set principle $\mathsf{COH}$ does not.

\begin{proposition}
$\tcoh$ implies $\msf{DNR}$.  Consequently, $\mathsf{COH}$ does not imply $\tcoh$.
\end{proposition}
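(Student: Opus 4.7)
The plan is to establish the main implication $\tcoh\Rightarrow\msf{DNR}$ and then derive the consequence that $\msf{COH}$ does not imply $\tcoh$. For the consequence, the cited theorem of Hirschfeldt, Jockusch, Kjos-Hanssen, Lempp and Slaman \cite{Hirschfeldt2008strength} gives $\msf{COH}\not\Rightarrow\msf{DNR}$; composing with the main implication, were $\msf{COH}$ to imply $\tcoh$ one would obtain $\msf{COH}\Rightarrow\msf{DNR}$, a contradiction.

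For the main implication, I reason inside an arbitrary model $\mathfrak{M}$ of $\mathsf{RCA}_0+\tcoh$ and fix $g\in\mathfrak{M}$; the goal is to produce $h\in\mathfrak{M}$ with $h(e)\ne\Phi^g_e(e)$ whenever $\Phi^g_e(e){\downarrow}$. The strategy is to design a $g$-computable $k$-coloring $C\colon[2^{<\omega}]^2\to k$ whose every stable solution $T\in\mathfrak{M}$ computes (together with $g$) the desired $h$. Fix a bijective coding $\operatorname{code}\colon2^{<\omega}\to\omega$ and take as template
\[
C(\sigma,\rho)=\big(\Phi^g_{\operatorname{code}(\sigma)}(\operatorname{code}(\sigma))[|\rho|]+1\big)\bmod k
\]
when the bracketed computation has halted by stage $|\rho|$, and $C(\sigma,\rho)=0$ otherwise. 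This is $g$-computable, and for any stable solution $T$ the limit $\tilde C(\sigma)=\lim_{\rho\in T,\rho\succeq\sigma}C(\sigma,\rho)$ exists and satisfies $\tilde C(\sigma)\ne\Phi^g_{\operatorname{code}(\sigma)}(\operatorname{code}(\sigma))$ whenever the latter converges (for $v=\Phi^g_{\operatorname{code}(\sigma)}(\operatorname{code}(\sigma))$ and $k\ge 2$, the value $(v+1)\bmod k$ always differs from $v$, both when $v<k$ and when $v\ge k$). Thus $\tilde C$ is a $\{0,\dots,k-1\}$-valued diagonalization of $\Phi^g$.

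The main obstacle is that $\tilde C$ is only $\Sigma^0_2$-definable in $g\oplus T$, while $\mathsf{RCA}_0$ provides only $\Delta^0_1$-comprehension, so $\tilde C$ need not belong to $\mathfrak{M}$. The plan to overcome this is to refine the template so that stability on any perfect solution is witnessed \emph{effectively} in $g\oplus T$. Concretely, one arranges the coloring to oscillate on $2^{<\omega}$ before the relevant $\Phi^g$-computation converges, in such a way that no infinite perfect subtree can maintain stability unless the first $T$-branching node $\rho_\sigma$ above $\sigma$ already realizes the limit value; then $h(\operatorname{code}(\sigma))=C(\sigma,\rho_\sigma)$ is $\Delta^0_1$ in $g\oplus T$, hence lies in $\mathfrak{M}$, and inherits the diagonalization property from $\tilde C$. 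The hardest step is to verify that such an effectively-stabilizing coloring nevertheless admits stable perfect solutions in every model of $\tcoh$, which is where the combinatorial subtlety of $\tcoh$ enters.
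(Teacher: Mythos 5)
Your reduction of the ``consequently'' part to the Hirschfeldt--Jockusch--Kjos-Hanssen--Lempp--Slaman result matches the paper. But the main implication $\tcoh\Rightarrow\msf{DNR}$ has a genuine gap, and it is precisely at the point you flag as ``the hardest step.''

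Your template coloring $C(\sigma,\rho)=(\Phi^g_{\operatorname{code}(\sigma)}(\operatorname{code}(\sigma))[|\rho|]+1)\bmod k$ is in fact already stable on all of $2^{<\omega}$ (the value settles once the $\Phi^g$-computation halts, or stays $0$ forever), so every infinite perfect subtree is a $\tcoh$-solution and the solution $T$ carries no information whatsoever about the limit function $\tilde C$; the $\Sigma^0_2$ obstruction you identify is therefore fatal, not merely technical. Your proposed fix---making $C$ oscillate before convergence so that stability on a perfect solution is ``effectively witnessed''---is not worked out, and there is a tension you do not address: if $\Phi^g_{\operatorname{code}(\sigma)}(\operatorname{code}(\sigma))$ diverges, the oscillation above $\sigma$ is permanent, and any perfect $T$ on which $C$ is stable must systematically avoid or reroute around the oscillation pattern; showing that such a $T$ with the additional ``first-branching-node-realizes-the-limit'' property exists in every model of $\tcoh$ is exactly the kind of claim that needs an argument, and you give none. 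Worse, even granting such a $T$, the decoding $h(\operatorname{code}(\sigma))=C(\sigma,\rho_\sigma)$ presupposes that $\sigma$ itself sits in $T$, which will generally be false for all but countably few $\sigma$, so the diagonalization never even gets off the ground for most indices.

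The paper avoids all of this by a structurally different idea: the DNR function is read off from the solution tree $G$ itself, not from limits of colors. Specifically, $h^G(e)$ is the G\"odel number of the height-$(e{+}1)$ initial segment $G_e$ of $G$, which is trivially $\Delta^0_1$ in $G$, so no $\Sigma^0_2$-comprehension is ever invoked. One then builds a weakly stable $\Delta^0_2$ tree-split $f$ (using $\emptyset'$ as an oracle for whether $\varphi_e(e){\downarrow}$ and, if so, whether it codes a finite perfect tree $F\cong 2^{<(e+1)}$) that diagonalizes so that no $G$ homogeneous for $f$ can have $F$ as its $e$-th initial segment: one picks a triple $(\sigma,\rho_0,\rho_1)$ in $F$ and forces $f(\rho)(|\sigma|)=\{i\}$ for $\rho\succeq\rho_i$, $i\in\{0,1\}$, so that the two branches of $F$ above $\sigma$ disagree with any homomorphism. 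Then Proposition~\ref{propcjsctt} converts $f$ into a weakly stable computable coloring $C$ whose every $\tcoh$-solution is homogeneous for $f$, and the conclusion follows. The essential point you are missing is this change of target: diagonalize against the \emph{shape of the solution tree}, not against a limit derived from the solution tree. That is what makes the DNR value computable from $G$ alone and sidesteps the $\Sigma^0_2$ obstacle entirely.
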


\begin{proof}
We first show that every $\omega$-model of $\tcoh$ contains a DNR function, i.e.~a function that diagonalizes against every $\varphi_e(e)$ whenever the latter is defined.
As the proof relativizes to any real, it  implies that every $\omega$-model of $\tcoh$ satisfies $\msf{DNR}$.

Let $\{\varphi_e(x): e\in\omega\}$ be an effective list of all unary partial recursive functions.
For any infinite perfect tree $G$, let $h^G(e)$ be the G\"odel number of its initial segment $G_e\cong  2^{<(e+1)}$.
Clearly, $h^G\le_T G$  and is total.
We define a weakly stable $2$-\treesplit\ $f\le_T\emptyset'$ on $2^{<\omega}$  such that for any $G$ which is homogeneous for $f$,
$h^G(e)\neq \varphi_e(e)$ whenever $\varphi_e(e)\downarrow$.  By Proposition \ref{propcjsctt}, a $G$ homogeneous for $f$ always exists in a model of $\tcoh$ and hence $h^G$ is the required DNR-function.

At stage $s=0$,  define $f(\varepsilon)=\{0,1\}$.

At stage $s=e+1$, we first diagonalize against $\varphi_e(e)$.  Use $\emptyset'$ to see if $\varphi_e(e)\downarrow$, if not, do nothing.
Otherwise,   see if it is a G\"odel number of a tree $F$  isomorphic to $2^{<(e+1)}$. If not, do nothing.  Otherwise,
choose the least triple $(\sigma, \rho_0, \rho_1) \in F^3$ such that (1) $|\sigma|\geq e$,  $\rho_0$,  $\rho_1$ are incompatible and each extends $\sigma$,
and  (2)  $|\sigma|\neq |\sigma'|$
 for any triple $(\sigma', \rho'_0, \rho'_1)$ that was  used at an earlier stage.
Such a triple exists because $F\cong 2^{<(e+1)}$ and at each stage  at most one triple was used.
 For $i=0,1$ and $\rho\succeq \rho_i$, we will make sure that:
   \begin{align}
   f(\rho)(|\sigma|)=\{i\} \ \ \ \ (\mcal{R}_{\sigma,\rho_0,\rho_1}).
   \end{align}
Declare the triple $(\sigma, \rho_0, \rho_1)$ as {\em used}.
The requirement $\mcal{R}_{\sigma,\rho_0,\rho_1}$ ensures that $F$ is not  an initial segment of a homogenous tree.
At stage $e+1$, suppose $f$ has been defined up to level $m_e$
(where $m_e$ is large enough say $m_e = \max_{t\leq r_e}\{|\rho^t_0|,|\rho^t_1|\}$
and where $\{(\sigma^t,\rho^t_0,\rho^t_1)\}_{t\leq r_e}$ are triples used before stage $e$)
so that it satisfies  all requirements
appearing before stage $e$ and is stable up to level $e$ (i.e., $|f(\rho)\uhr e| = 1$ for all $\rho$ with $|\rho|= m_e$).
Clearly we can extend $f$   to level $m_{e+1}$ to satisfy the newly appeared requirement at stage $e+1$,
since $\sigma$ has not been used before. Moreover, we can also ensure that $f$ is stable up to $e+1$.
\end{proof}

\begin{remark}
{\rm  The above construction may be carried out in any model of $\msf{RCA}_0$. In particular, for each $e_0$, the set $\{e: \varphi_e(e)\downarrow\}$ is a $\Sigma^0_1$-definable set and hence $\mathfrak{M}$-finite for any $\mathfrak{M}\models \mathsf{RCA}_0$.}
\end{remark}

%\section{Reducing strong avoidance to $\Pi_1^0$ class avoidance}

\section{Separating $\msf{WWKL}$ from $\tt_k^2$} \label{tt22vswklsec4}

The question whether $\tt_2^2$ implies $\msf{WKL}_0$, a natural analog  of the original question for $\msf{RT}^2_2$ solved in Liu\cite{Liu2012RT22}, was raised in \cite{DzhafarovColoring}.
The main result of this paper answers the question negatively by exhibiting a model of $\tt_2^2$ that admits avoidance of bounded enumeration.
Thus an argument similar to that in the proof of Corollary \ref{CTTandWWKL}  works for $\tt^2_2$.
Recall that Dzhafarov, Hirst and Lankins \cite{Dzhafarov2009polarized} (Proposition \ref{DHLdecomp})  showed that $\msf{TT}_k^2 \leftrightarrow \msf{STT}_k^2+ \tcoh$,
and Corollary \ref{coro103} says that $\tcoh$ admits avoidance of bounded enumeration.
It remains to  show that $\stt_k^2$ admits avoidance of bounded enumeration.
Since every computable stable coloring $C: [2^{<\omega}]^2\rightarrow k$ of $\stt_k^2$ naturally induces a $\Delta_2^0$-instance $\t{C}$ of $\tt_k^1$
such that any  solution of $\t{C}$ computes a solution of $C$,
the problem is further reduced to showing that $\tt_k^1$ admits strong avoidance of bounded enumeration.
In this section, we prove that this is indeed the case.

\begin{theorem}\label{th101}
$\tt_k^1$ admits strong avoidance of bounded enumeration.
\end{theorem}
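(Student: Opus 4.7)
The plan is to construct the $\tt^1_k$-solution $G$ as the union of a sequence of finite monochromatic perfect trees produced by Mathias-style forcing, while ensuring at every step that a suitable reservoir prevents $D\oplus G$ from computing a bounded enumeration of $S$.

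First I would establish the \emph{perfect extendibility lemma}: for every $k$-coloring $C$ of $2^{<\omega}$ and every $\sigma\in 2^{<\omega}$, there are $c<k$ and $\tau\succeq \sigma$ such that $C^{-1}(c)\cap[\tau]^{\preceq}$ contains an infinite perfect subtree rooted at $\tau$. The proof is by induction on $k$: the perfect kernel of any $V\subseteq 2^{<\omega}$ (the nodes above which $V$ admits an infinite perfect subtree rooted there) is itself perfectly extendible whenever nonempty, and if the perfect kernel of $C^{-1}(0)$ is empty above $\sigma$ then a K\"onig-style compactness argument supplies a perfect subtree in $C^{-1}(\{1,\dots,k-1\})$ to which the inductive hypothesis applies. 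This lemma provides both the initial forcing condition and each extension step where the reservoir must be split beneath its leaves.

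A forcing condition is a triple $(F, c, T)$ where $F$ is a finite $c$-monochromatic perfect tree, $T\subseteq C^{-1}(c)$ is an infinite perfect reservoir above each leaf of $F$, and $D\oplus T$ does not compute a bounded enumeration of $S$. To force the requirement $\mathcal{R}_{e,l}$ that $\Psi_e^{D\oplus G}$ is not an $l$-enumeration of $S$, I would search for a $c$-monochromatic perfect extension $F'\succeq F$ with $F'\setminus F\subseteq T$ and some $n$ such that $\Psi_e^{D\oplus F'}(n)\downarrow$ either has more than $l$ elements or is disjoint from $S\cap 2^n$; if such $F'$ exists, pass to $(F', c, T\cap[\leaf(F')]^{\preceq})$. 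Otherwise every $c$-monochromatic perfect extension of $F$ inside $T$ yields a convergent output of size at most $l$ meeting $S\cap 2^n$, and searching for the first such output produces a $D\oplus T$-computable $l$-enumeration of $S$, contradicting the reservoir assumption.

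The main obstacle is that the class of infinite perfect subtrees of $C^{-1}(c)$ above a given node is only $\Pi^{0,D\oplus C}_1$ rather than $\Pi^{0,D}_1$, so a direct basis-theorem application would deliver an initial reservoir $T$ with $D\oplus C\oplus T$ (rather than $D\oplus T$) avoiding $S$, a strictly stronger conclusion than the hypothesis warrants. I plan to overcome this via a multi-color variant of the forcing in the spirit of Theorem~\ref{th3}: the conditions carry parallel $D$-computable candidate reservoirs $X_0,\dots,X_{k-1}$ together with finite monochromatic trees $F_c$ inside each, the extendibility lemma guarantees that at least one color remains live throughout the construction, and the bad-case analysis for $\mathcal{R}_{e,l}$ is applied uniformly across all $k$ threads so that the extracted $l$-enumeration of $S$ is $D$-computable, yielding the required contradiction with the non-enumerability of $S$ from $D$.
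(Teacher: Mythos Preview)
You correctly identify the central obstacle---that $C$ is not $D$-computable, so any search over $c$-monochromatic extensions is not $D$-computable either---but your proposed fix in the final paragraph does not actually close the gap. Carrying parallel $D$-computable reservoirs $X_0,\dots,X_{k-1}$ with monochromatic finite trees $F_c$ inside each still requires access to $C$ to recognize which finite perfect $F'\succeq F_c$ inside $X_c$ are $c$-monochromatic, so the ``bad-case'' search for an $F'$ with $\Psi_e^{D\oplus F'}(n)\downarrow$ meeting $S$ remains $D\oplus C$-computable, not $D$-computable. Uniformity across threads does not help: the enumeration you extract from the $k$ searches is still an enumeration computable only from $D\oplus C$, and in the strong-avoidance setting $C$ may already compute a bounded enumeration of $S$. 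The analogy with Theorem~\ref{th3} is misleading: there the objects being avoided (tree-splits) themselves form a $\Pi^{0}_1$-class, so one can take cross products and stay within a $\Pi^{0}_1$-framework; here the single non-computable coloring $C$ offers no such handle.

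What the paper actually does is first reduce strong avoidance to \emph{$\Pi^0_1$-class avoidance} (Theorem~\ref{th8}): one replaces the single unknown $C$ by the $\Pi^{0,D}_1$-class of all colorings compatible with the finite data so far, and the bad-case analysis produces classes $Q^n_V$ of colorings rather than a single search. The contradiction in the bad case then comes from a combinatorial lemma about \emph{scattered} (blocking-set-free) families $\{V_m\}$, together with a layered \emph{$k$-hierarchy} of bad finite trees that pigeonholes any coloring into one of the bad cones (Lemma~\ref{lem12}). These two ingredients---working with $\Pi^0_1$-classes of colorings and the $(k^u,l)$-scattered combinatorics---are the missing ideas; without them there is no way to make the extracted enumeration $D$-computable.
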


We  begin by  introducing the following notion:

\begin{definition} \label{defavoidancepi01}
A problem $\msf{P}$ admits $\Pi_1^0$-\emph{class avoidance of bounded enumeration} if for any $D\subseteq\omega$ and $S\subseteq 2^{<\omega}$ that
does not admit a $D$-computable bounded enumeration, and any nonempty $\Pi_1^{0,D}$-class $Q$ of $\msf{P}$-instances,
there exists an instance $X\in Q$, a $\msf{P}$-solution $Y$ of $X$ such that $D\oplus Y$ does not compute a bounded enumeration of $S$.
\end{definition}

In the current setting, $Q$ is a $\Pi^{0,D}_1$-class of $k$-colorings on $2^{<\omega}$. We prove Theorem \ref{th101} in two steps: First,
in Theorem \ref{th8} we reduce  strong avoidance of bounded enumeration for $\tt_k^1$ to $\Pi_1^0$-class avoidance of bounded enumeration for $\tt^1_k$.
We then prove the latter in Theorem \ref{th100}.

\begin{restatable}{theorem}{reduce} \label{th8}
$\tt_k^1$ admits strong avoidance of bounded enumeration if and only if it admits $\Pi_1^0$-class avoidance of bounded enumeration.
\end{restatable}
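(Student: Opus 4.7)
The forward direction is immediate: given a nonempty $\Pi_1^{0,D}$-class $Q$ of $\tt_k^1$-instances, select any $C\in Q$ (which exists since $Q$ is nonempty) and apply strong avoidance to $C$ to obtain a $\tt_k^1$-solution $T$ with $D\oplus T$ not computing a bounded enumeration of $S$; this $T$ witnesses the $\Pi_1^0$-class avoidance property. The substantive content is the backward direction, so assume $\tt_k^1$ admits $\Pi_1^0$-class avoidance of bounded enumeration, and fix $D$, $S\subseteq 2^{<\omega}$ without $D$-computable bounded enumeration, together with an arbitrary (possibly non-$D$-computable) $\tt_k^1$-instance $C:2^{<\omega}\to k$.

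The plan is a Mathias-style forcing whose conditions are triples $(F,i,\mathcal{Q})$, where $F\subseteq 2^{<\omega}$ is a finite perfect subtree, $i<k$, $C\upharpoonright F\equiv i$, and $\mathcal{Q}$ is a nonempty $\Pi_1^{0,D}$-class of $k$-colorings of $2^{<\omega}$ all of which agree with $C$ on $F$. Crucially, this latter condition is $D$-computable because $F$ and $C\upharpoonright F$ form finite data, which can be absorbed into the definition of $\mathcal{Q}$ without enlarging the oracle beyond $D$. Extensions refine $F$ to a larger $C$-monochromatic $F'$ and narrow $\mathcal{Q}$ to $\mathcal{Q}'\subseteq\mathcal{Q}$ by the new finite agreement with $C$ on $F'\setminus F$. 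To force the $n$-th avoidance requirement $\mathcal{R}_n$ (that $\Psi_n^{D\oplus G}$ is not a bounded enumeration of $S$), apply $\Pi_1^0$-class avoidance to the current $\mathcal{Q}_n$, obtaining some $C^*\in\mathcal{Q}_n$ and a $\tt_k^1$-solution $T^*$ of $C^*$ such that $D\oplus T^*$ does not compute a bounded enumeration of $S$. Since $C^*$ agrees with $C$ on $F_n$, the portion of $T^*$ extending $\ell(F_n)$ provides a large pool of $C^*$-monochromatic candidates, and a pigeonhole on the $k$ colors selects an extension $F_{n+1}\succeq F_n$ within $T^*$ that is also $C$-monochromatic with color $i$.

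The main obstacle is that selecting $F_{n+1}$ ostensibly requires querying $C$ to verify $C$-monochromaticity, which threatens to push the final $G=\bigcup_n F_n$ above $D\oplus T^*$ in Turing degree and thereby break the avoidance property. The resolution is to exploit both the combinatorial richness of $T^*$ and the fact that $\tt_k^1$ applied to $C\upharpoonright T^*$ guarantees, for at least one color $i$, arbitrarily tall $C$-monochromatic finite perfect subtrees inside $T^*$; among finitely many such candidates at each height, the appropriate one can be selected by a search that uses only $D\oplus T^*$ together with the finite bookkeeping $C\upharpoonright F_{n+1}$, which in turn is absorbed into $\mathcal{Q}_{n+1}$ as the new finite constraint. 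Iterating over all requirements while ensuring perfectness yields $G$ with $D\oplus G\leq_T\bigoplus_n(D\oplus T_n^*)$, from which the desired failure of $D\oplus G$ to compute a bounded enumeration of $S$ follows. The combinatorial delicacy of this last step — in particular, arranging the pigeonhole so that a correct $F_{n+1}$ can always be located inside $T^*$ — is the technical heart of the argument, but the $\Pi_1^0$-class avoidance hypothesis supplies the other half of the reduction by producing the required $T^*$ at each stage.
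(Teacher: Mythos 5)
Your forward direction is correct and matches the paper's one-line observation. The backward direction, however, has a genuine gap at precisely the step you call ``the technical heart,'' and it is not repairable within your framework. Your conditions $(F,i,\mathcal{Q})$ commit to a color $i$, but when you invoke $\Pi_1^0$-class avoidance on $\mathcal{Q}_n$ you obtain a $T^*$ homogeneous for some $C^*\in\mathcal{Q}_n$, and the color for which $C\upharpoonright T^*$ admits dense monochromatic subtrees --- the color that $\tt_k^1$ hands you --- need not be the committed $i$ and can change from stage to stage, so there is no guarantee of a $C$-monochromatic extension of color $i$ inside $T^*$ at all. Moreover, locating $F_{n+1}$ requires consulting $C$ across the reservoir $T^*$, not merely on $F_{n+1}$, so calling this information ``finite bookkeeping absorbed into $\mathcal{Q}_{n+1}$'' is circular. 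Most seriously, you never actually force $\mathcal{R}_n$: that $D\oplus T^*$ avoids computing a bounded enumeration of $S$ says nothing about $\Psi_n^{D\oplus G}$, since $G$ is not contained in any single $T^*$, and the construction of $G$ leaks information about $C$ that has no a priori bound in $D\oplus T^*$. The closing claim $D\oplus G\leq_T\bigoplus_n(D\oplus T^*_n)$ is both unproved and, even if true, insufficient --- an $\omega$-join of sets each avoiding bounded enumerations can still compute one, which is exactly why Mathias forcing must force requirements one at a time inside a fixed reservoir by asking whether some finite extension already yields a wrong answer, or else whether none converges, turning the failure of both outcomes into a $D$-computable bounded enumeration.

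The paper takes an entirely different route: a global case analysis rather than a local Mathias iteration, with the commitment to a color postponed to the very end. Either (Case I) there is a single infinite perfect tree $T$ not computing a bounded enumeration of $S$ and a single color $i$ such that Mathias forcing on color $i$ inside $T$ never gets stuck, and then Lemma \ref{lem18} finishes the construction; or (Case II) for every color and every such reservoir, some Mathias condition is ``bad'' for some $\mathcal{R}_e$ on that color. Case II is where $\Pi_1^0$-class avoidance actually earns its keep: one builds a $k$-\hierarchy\ of bad conditions, one layer per color, introduces $\Pi_1^0$-classes $Q^n_V$ of $k$-colorings indexed by finite $V\subseteq 2^n$, applies $\Pi_1^0$-class avoidance repeatedly to extract compatible colorings $\hat{C}_m\in Q^n_{V_m}$ together with mutually homogeneous forests, and then invokes the hierarchy pigeonhole (Lemma \ref{lem12}) and the scattering combinatorics (Definition \ref{defdisperse}, Lemma \ref{tt22vswkllem17}) to reach a contradiction: either some bad condition is in fact satisfiable, or a $D$-computable bounded enumeration of $S$ falls out. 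The $k$-hierarchy, the scattering/blocking-set calculus, and the family of $V$-indexed $\Pi_1^0$-classes are exactly the apparatus missing from your outline; they are what allow the argument to avoid ever consulting $C$ while still ultimately committing to a single color.
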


\begin{restatable}{theorem}{classavoidance} \label{th100}
$\tt_k^1$ admits $\Pi_1^0$-class avoidance of bounded enumeration.
\end{restatable}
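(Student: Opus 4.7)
The plan is to prove Theorem~\ref{th100} by a Mathias-style forcing that simultaneously builds a coloring $C\in Q$ and an infinite perfect $C$-monochromatic subtree $G$, while diagonalizing against all potential $D\oplus G$-computable bounded enumerations of $S$. This adapts the strategy of the proof of Theorem~\ref{th3} by adding a parameter that records a descending sequence of $\Pi_1^{0,D}$-subclasses of $Q$ (into which $C$ is ultimately extracted by compactness) alongside the usual finite-tree data. Conditions will be triples $(F,i,P)$ where $F$ is a finite perfect subtree of $2^{<\omega}$, $i<k$ is a color, and $P$ is a nonempty $\Pi_1^{0,D}$-subclass of $\{C\in Q:(\forall\rho\in F)\,C(\rho)=i\}$ satisfying the extendibility property that
\[
\mathcal T(P,F,i)\;=\;\bigl\{F'\succeq F:F'\text{ finite perfect and }P\cap\{C:C(F'\setminus F)=\{i\}\}\ne\emptyset\bigr\}
\]
contains members of $F'$-height $|F|+n$ for every $n$; extension $(F',i,P')\le(F,i,P)$ is defined by $F'\succeq F$ and $P'\subseteq P$, with $(F',i,P')$ itself a valid condition.

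To initialize, one picks $i_0<k$ for which $\mathcal T(Q,\emptyset,i_0)$ is infinite: such $i_0$ exists because any $C\in Q$ is a $\tt_k^1$-instance and therefore admits an infinite perfect monochromatic subtree in some color, which furnishes finite monochromatic trees of every height. Density of extensions uses K\"onig's lemma applied to the finitely branching tree $\mathcal T(P,F,i)$ to find $F'\succ F$ of height $|F|+1$ above which $\mathcal T(P,F,i)$ remains infinite, and the new condition is $(F',i,P\cap\{C:C(F'\setminus F)=\{i\}\})$, whose extendibility is guaranteed by this choice. For the bounded-enumeration requirement at stage $(e,l)$, I would follow a three-case analysis analogous to the proof of Theorem~\ref{th3}: (i) locate an extendible $F'\succeq F$ and some $m$ with $\Psi_e^{D\oplus F'}(m)\downarrow=W$ and $W\cap S\cap 2^m=\emptyset$, then extend; (ii) otherwise, restrict $P$ to a nonempty $\Pi_1^{0,D}$-subclass ensuring that $\Psi_e^{D\oplus F'}(m)\uparrow$ for all valid extensions $F'$ and some $m$, thereby forcing $\Psi_e^{D\oplus G}(m)\uparrow$; or (iii) failing both, extract from the uniformly bounded halting outputs of $\Psi_e^{D\oplus F'}(m)$ across $F'\in\mathcal T(P,F,i)$ at each level $m$ a $D$-computable bounded enumeration of $S$, contradicting the hypothesis.

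The principal obstacle is ensuring that the extendibility property of conditions is preserved under the successive $\Pi_1^{0,D}$-class restrictions, especially in case (ii) of the diagonalization: if shrinking $P$ in order to force $\Psi_e^{D\oplus G}(m)\uparrow$ destroys extendibility for color $i$, one must show that this failure itself triggers the case~(iii) contradiction via a compactness argument on the $\Pi_1^{0,D}$-tree $\mathcal T(P,F,i)$, analogous to the Jockusch--Soare-style manipulation of $\Pi_1^0$-classes. Once all requirements are forced, the intersection of the descending chain of nonempty $\Pi_1^{0,D}$-classes is nonempty by compactness of the ambient product space and contains a coloring $C\in Q$, while the union of the finite perfect trees $F_n$ constructed through the forcing is an infinite perfect subtree $G$ on which $C$ takes the constant value $i_0$; by construction, $D\oplus G$ computes no bounded enumeration of $S$, which proves the theorem.
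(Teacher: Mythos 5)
Your proposal identifies the right genre of argument — a Mathias-style forcing whose conditions carry $\Pi_1^{0,D}$-class data alongside a finite perfect tree — and you correctly pinpoint the difficulty (preserving extendibility under class restriction while forcing divergence). But you stop precisely at that difficulty rather than resolving it, and the paper's proof of Theorem~\ref{th100} (via Theorem~\ref{th102}) is devoted almost entirely to the machinery that fills this gap. Several of your steps either fail as stated or are missing their key ingredient.

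First, your conditions fix a single color $i_0$ from the outset. In the paper, a \emph{condition} (Definition~\ref{defcondition}) carries finite families $\mcal{F}_{i,I}$ of \emph{preconditions} ranging over all colors $i$ and all index sets $I$, together with the \emph{sufficiency} property~(v): for every $g$ in the density-predicting family $\mcal{G}_{\v Q}$ (Lemma~\ref{lem33}), some precondition is consistent with $g$. Sufficiency is what guarantees, via Lemma~\ref{lempos}, that at least one precondition always admits an infinite perfect extension — so the construction never gets stuck. The single color is extracted only at the very end by applying K\"onig's lemma to the finitely branching tree $\mcal{T}$ of preconditions built during the construction. If you commit to $i_0$ at stage zero, you must show on your own that shrinking $P$ in case~(ii) never kills extendibility for color $i_0$; the paper avoids this by keeping all colors alive simultaneously.

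Second, your density step — ``use K\"onig's lemma applied to the finitely branching tree $\mathcal T(P,F,i)$'' — does not go through as stated, because $\mathcal T(P,F,i)$ is \emph{not} finitely branching: there are infinitely many finite perfect $F'\succ F$ of height $|F|+1$, since each leaf of $F$ can split at any later level. The paper's corresponding fact is Lemma~\ref{lempos}, which is proved not by K\"onig's lemma on the extension tree but by picking $g\in\mcal{G}_{\v Q}$ and invoking sufficiency; and the tree $\mcal{T}$ that \emph{is} finitely branching is the explicitly constructed sequence of preconditions, not the set of all candidate extensions.

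Third, and most importantly, your case~(ii) — ``restrict $P$ to a nonempty $\Pi_1^{0,D}$-subclass ensuring that $\Psi_e^{D\oplus F'}(m)\uparrow$ for all valid extensions $F'$'' — has no mechanism to accomplish this. Shrinking $P$ trims which colorings may witness an extension, but as soon as $P$ contains a single $C$ with a large monochromatic forest, all subtrees of that forest are valid extensions, and a one-dimensional class cannot be thinned further. The paper's device for this is the \emph{Type II extension}: replace the class $\v Q$ of $d$-dimensional coloring vectors by a subclass $\v Q^*\subseteq\v Q^m$ of $dm$-dimensional vectors, so that valid extensions must now be simultaneously monochromatic for many more coordinates (Lemma~\ref{lemext}, Lemma~\ref{lemcore}). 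This raising of dimension, combined with the $(k,l)$-dispersed/blocking-set combinatorics of Lemma~\ref{tt22vswkllem17} and the classes $\mbq^n_V$, is exactly what drives the dichotomy between forcing divergence and extracting a $D$-computable bounded enumeration of $S$. Relatedly, your case~(iii) claim that one can ``extract \dots a $D$-computable bounded enumeration'' from the halting outputs glosses over the fact that these outputs range over infinitely many finite sets; the dispersed-family bound is what caps their size and yields the enumeration.

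In short, you have the shape of the argument but none of the three specific mechanisms — families of preconditions with the sufficiency invariant, the density-prediction apparatus $\mcal{G}_{\v Q}$, and Type II extensions via cartesian powers of $\Pi_1^0$-classes together with the $(k,l)$-dispersed/blocking-set lemma — that make the forcing close. Your ``principal obstacle'' paragraph correctly names where the work must happen; the paper's \S\ref{secpi10} is that work, and your compactness hand-wave is not a substitute for it.
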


 Theorem \ref{th101}  allows one to remove the hypothesis in  Corollary \ref{coro103}:
\begin{corollary} \label{strong}
$\tcoh$ admits strong avoidance of bounded enumeration.
\end{corollary}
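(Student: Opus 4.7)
The plan is to derive Corollary \ref{strong} as an immediate consequence of Theorem \ref{th101} combined with the ``moreover'' clause of Corollary \ref{coro103}. That clause asserts that strong avoidance of bounded enumeration for $\tt^1_{k'}$, holding for every $k'\in\omega$, transfers to $\tcoh$; Theorem \ref{th101} discharges precisely this hypothesis, so the conclusion drops out with no further construction required.

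To make the reduction transparent I would trace the two ingredients behind the ``moreover'' clause. The first is the ``moreover'' part of Proposition \ref{prop4}, which shows that strong avoidance of $\tt^1_{2^{k'}}$ on each cone above a leaf of $F_e$ upgrades the $\wtcoh$ construction from avoidance to strong avoidance: on each leaf $\sigma_i$ of $F_e$ one thins $X_e\cap [\sigma_i]^\preceq$ by applying a strong $\tt^1_{2^{|\sigma_i|}}$-solution to the induced $2^{|\sigma_i|}$-coloring, so that the resulting perfect subtree joined with $D$ still computes no bounded enumeration of $S$; the Mathias-style extension argument then proceeds as in the plain-avoidance case. The second ingredient is Theorem \ref{th3}, which already delivers $k$-$\mathsf{TSP}$ strong avoidance of bounded enumeration unconditionally. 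One combines these by alternating a $\wtcoh$-step with a $k$-$\mathsf{TSP}$-step, passing between weakly cohesive-tree colorings and weakly stable $k$-tree-splits via Propositions \ref{prop2} and \ref{propcjsctt}, to build a single $\tcoh$-solution $G$ such that $D\oplus G$ computes no bounded enumeration of $S$, \emph{regardless of whether the original coloring is $D$-computable}.

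The only subtle point---and the one that motivated the ``moreover'' phrasing in the first place---is that both constituent constructions must be uniformly relativizable in the instance treated as an oracle parameter, so that the starting coloring $C$ need not lie below $D$. This uniformity was already built into the proofs of Proposition \ref{prop4} and Theorem \ref{th3}: the $\Pi^{0,D}_1$-class arguments used to isolate the enumeration-avoiding $k$-tree-splits, and the Mathias forcing that drives the cohesive extension, both relativize to an arbitrary external parameter. Consequently the ``hard step'' is not technical at all; it amounts to observing that the two strong-avoidance arguments compose cleanly along the decomposition $\tcoh = \wtcoh + k$-$\mathsf{TSP}$ of Corollary \ref{decomposition}, once Theorem \ref{th101} has certified the missing $\tt^1_{k'}$ input. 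With that observation Corollary \ref{strong} follows at once.
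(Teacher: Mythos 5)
Your proposal is correct and follows exactly the same route the paper takes: Corollary \ref{strong} is obtained by discharging the hypothesis of the ``moreover'' clause of Corollary \ref{coro103} via Theorem \ref{th101}, which is precisely what the paper signals by the sentence immediately preceding the corollary. Your additional tracing of the ingredients (the ``moreover'' part of Proposition \ref{prop4} together with Theorem \ref{th3}, composed along the decomposition of Corollary \ref{decomposition}) accurately reflects how the ``moreover'' clause was itself established.
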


We prove Theorem \ref{th8} in the next subsection, and   Theorem \ref{th100} in \S \ref{secpi10}.

\subsection{Reduction to $\Pi_1^0$-class avoidance} \label{secreducing}

\reduce*

%Note: It seems the proof would not work for $l$-enumeration, where $l$ is a fixed positive integer.

Strong avoidance obviously implies $\Pi_1^0$-class avoidance. We will prove  the converse.
Since the argument relativizes to  any set $D$ in Definitions \ref{boundenumeration} and \ref{defavoidance}, we may take $D=\emptyset$.
Suppose $S\subseteq 2^{<\omega}$  does not admit a computable bounded enumeration.
Let $C:2^{<\omega}\rightarrow k$ be a $\tt_k^1$ instance.
We show that there exists a  solution $G$ of $C$ such that $G$ does not compute a bounded enumeration of $S$
(under the assumption that $\tt_k^1$ admits $\Pi_1^0$-class avoidance of bounded enumeration).

In the proof we assume that for any Turing functional $\Psi_e$ there exists an $l$, depending on $e$, such that
for any oracle set $X$, $\Psi_e^X$ outputs an $l$-enumeration of $2^{<\omega}$, and for any $X$ and $n$, $\Psi_e^X(n)\downarrow\rightarrow \Psi_e^X(n)\subseteq 2^n$.

Since we are working in $\omega$-models, we may assume that all colors are dense. In fact, we will assume something stronger:
For every $k'\in k$ and any $T\subseteq 2^{\omega}$, if $T$ does not compute a bounded enumeration of $S$,
then $C^{-1}(k')\cap T$ is dense in $T$.  This  is possible   since otherwise one can use the non-density of the color $k'$ in $T$ and work inside a subtree of $T$ which is recursive in $T$ and has no node with color $k'$.
An easy induction on the number of colors  finishes the proof.
This observation enables us to concentrate on achieving avoidance without worrying about the density of colors.
It also circumvents  issues like $S$ being coded in color $0$ (as $C^{-1}(0)$ may otherwise  not be dense in certain trees).

The requirements on avoidance of bounded enumeration are as follows:
\begin{itemize}
\item $\mcal{R}_e$: Either for some $n$, $\Psi_e^G(n)\downarrow$ and $\Psi_e^G(n) \cap S=\emptyset$ or $\Psi_e^G$ is not total.
\end{itemize}

We introduce  a  method  that is derived from  Mathias forcing.
As in Section \ref{tt2wklsec3}, let $B$  denote a finite set of pairwise incompatible nodes on $2^{<\omega}$,
and let $X$ and $Y$  denote {\em forests with cone base $B$}. These are sets  of the form $\bigcup \{U_{\sigma}:\sigma\in B\}$
where each $U_{\sigma}$ is an infinite perfect tree.

A \emph{condition} is a pair $(F,X)$ such that $F$ is a finite perfect tree and $X$ is a forest with cone bases $\leaf(F)$ such that
$X$ does not compute a bounded enumeration of $S$. Consequently no $U_{\sigma}\subseteq X$ computes a bounded enumeration of $S$, because $B$ is finite.
Moreover, by the remark above, every color is dense in $X$.
A condition $(F_1, X_1)$ \emph{extends} a condition $(F_0,X_0)$, written $(F_1,X_1)\leq (F_0,X_0)$,
if $F_1\succeq F_0$ and $F_1\cup X_1\subseteq F_0\cup X_0$.
%$\{\h{T}:\h{T}\succeq \t F\wedge \h{T}\subseteq \t T\cup \t F\}\subseteq \{\h{T}:\h{T}\succeq F\wedge \h{T}\subseteq T\cup F\}$.

We say that a condition $(F,X)$ \emph{satisfies} $\mcal{R}_e$ on color $i$ if $F\subseteq C^{-1}(i)$, and there is an $n$ such that $\Psi^F_e(n)\downarrow$ and $\Psi^F_e(n)\cap S=\emptyset$,
or for all $F'\succeq F$ with $(F'\setminus F)\subseteq X$, $\Psi^{F'}_e(n)\uparrow$.

Failing to satisfy an $R_e$ may offer us a $G\leq_T C$ which computes a bounded enumeration of $S$.
While ordinarily this would immediately lead to a contradiction,
it is not so when
 dealing with strong avoidance.   A new strategy is required  to handle this situation.
We  begin with identifying the conditions which cannot be  extended to satisfy an   $R_e$.

\begin{definition} \label{encode}
Let $i<k$. We say that a condition $(F,X)$ is {\em bad for $\mathcal{R}_e$ on color $i$} if $F\subseteq C^{-1}(i)$ and no  extension $(\t{F},\t{X})$
with $\t{F}\subseteq C^{-1}(i)$ satisfies $R_e$.
\end{definition}

The following steps enables one to select a ``good'' color:  For $i=0$, check if for all infinite perfect $T\subseteq 2^{<\omega}$,
such that $T$ does not compute a bounded enumeration of $S$,
there exist a condition $(F, X)$ with $(F\cup X)\subseteq T$, an index $e$ such that $(F,X)$ is bad for $\mathcal{R}_e$ on color $0$.
If the answer is ``no'', then  there is an infinite perfect $T\subseteq 2^{<\omega}$ which does not compute any bounded enumeration of $S$
such that for each condition $(F, X)$ with $(F\cup X)\subseteq T$,
for each index $e$, there is an extension $(\t{F},\t{X})\leq (F,X)$ satisfying $\mathcal{R}_e$ on color $0$.
We take this $T$ and the ``good'' color $0$, and stop the process.
If the answer is ``yes'',  repeat the steps above for $i=1$.  This process continues sequentially for  colors in $k$ and
 yields two possible outcomes: either
 \begin{enumerate}
 \item [(I)]    There exist an infinite perfect tree $T$  not computing a bounded enumeration of $S$, and a color $i<k$ such that for each condition $(F, X)$ with $(F\cup X)\subseteq T$,
for each index $e$, there is an  $(\t{F},\t{X})\leq (F,X)$ satisfying $\mathcal{R}_e$ on color $i$, or
\item [(II)]  For any color $i<k$,
and any infinite perfect tree $T$ not computing a bounded enumeration of $S$, there exist a condition $(F, X)$ with $(F\cup X)\subseteq T$, an index $e$ such that $(F,X)$ is bad for $\mathcal{R}_e$ on color $i$.
\end{enumerate}

For Case  (I), we can easily build an solution $G$ with color $i$:

\begin{lemma} \label{lem18}
Let $T$ be an infinite perfect tree such that $T$ does not compute a bounded enumeration of $S$. Assume that  for each condition $(F, X)$ with $(F\cup X)\subseteq T$
and index $e$, there is an  $(\t{F},\t{X})\leq (F,X)$ satisfying $\mathcal{R}_e$ on color $i$.
Then there exists an infinite perfect \treeset\ $G\subseteq T \cap C^{-1}(i)$ that does not compute a bounded enumeration of $S$.
\end{lemma}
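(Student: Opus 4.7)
The plan is to construct $G$ by a Mathias-style forcing with the conditions $(F,X)$ already introduced above, building a descending sequence $(F_s,X_s)_{s\in\omega}$ of conditions satisfying $F_s\cup X_s\subseteq T$ and $F_s\subseteq C^{-1}(i)$ throughout, and setting $G=\bigcup_s F_s$. I would start with $F_0=\{\sigma_0\}$ for some $\sigma_0\in T\cap C^{-1}(i)$ and $X_0$ a subforest of $T$ rooted at $\sigma_0$; the blanket assumption made at the start of \S\ref{secreducing} that every color is dense in every tree not computing a bounded enumeration of $S$ makes such a $\sigma_0$ available and ensures the same density in every $X_s$ I produce.

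Stages alternate between growth and avoidance. At an even stage $s=2e$, for each leaf $\sigma$ of $F_s$ I use the density of color $i$ in $X_s$ to choose two incompatible extensions of $\sigma$ inside $C^{-1}(i)\cap X_s$; adjoining these to $F_s$ yields a finite perfect tree $F_{s+1}\succ F_s$ of strictly greater uniform height, still contained in $T\cap C^{-1}(i)$, and I take $X_{s+1}$ to be the forest consisting of the cones of $X_s$ above the new leaves. Since $X_{s+1}$ is uniformly computable from $X_s$, and $X_0\le_T T$, the inequality $X_{s+1}\le_T T$ is maintained by induction, so $X_{s+1}$ inherits from $T$ the property of not computing a bounded enumeration of $S$. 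At an odd stage $s=2e+1$, I invoke the hypothesis of the lemma with the condition $(F_s,X_s)$ (whose components still lie in $T$) and the index $e$: it delivers an extension $(F_{s+1},X_{s+1})\le(F_s,X_s)$ satisfying $\mathcal{R}_e$ on color $i$, with $F_{s+1}\subseteq C^{-1}(i)$ and $X_{s+1}$ still failing to compute a bounded enumeration of $S$.

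The verification is then straightforward. The set $G=\bigcup_s F_s$ is contained in $T\cap C^{-1}(i)$ and is an infinite perfect tree because the even-stage actions increase the uniform height of $F_s$; and for each $e$, the condition $(F_{2e+2},X_{2e+2})$ witnesses $\mathcal{R}_e$ on $G$, since either some $n$ and a convergent computation already give $\Psi_e^{F_{2e+2}}(n)\cap S=\emptyset$ (which then persists for $G\supseteq F_{2e+2}$), or every finite perfect extension of $F_{2e+2}$ whose new nodes lie in $X_{2e+2}$ leaves $\Psi_e$ undefined at the corresponding $n$, and the inclusion $G\setminus F_{2e+2}\subseteq X_{2e+2}$ then forces $\Psi_e^G(n)\uparrow$. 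Hence $G$ computes no bounded enumeration of $S$. The only point that needs care, rather than a genuine obstacle, is the upkeep of the invariants $F_s\cup X_s\subseteq T$, $F_s\subseteq C^{-1}(i)$, $X_s\le_T T$, and density of color $i$ in $X_s$; all of these are preserved by each of the two stage types, so the induction goes through.
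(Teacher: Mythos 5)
Your proof is correct and follows the same Mathias-forcing strategy as the paper's; the paper handles infinity of $G$ implicitly by noting that suitable functionals coding tree height are already absorbed into the $\mathcal{R}_e$-requirements (the argument borrowed from Proposition~\ref{prop4}), whereas you alternate explicit growth and avoidance stages, which is arguably a bit more transparent. One small wrinkle: the invariant $X_s\le_T T$ you cite at even stages is not actually preserved across the odd stages, since the $(F_{s+1},X_{s+1})$ supplied by the hypothesis need not be $T$-computable; but this is immaterial, because at a growth stage $X_{s+1}\le_T X_s$ and $X_s$ already fails to compute a bounded enumeration of $S$ by virtue of $(F_s,X_s)$ being a condition, which is the property you actually use.
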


\begin{proof} We build a sequence of conditions $(F_e,X_e)_{e\in \omega}$ as follows.
Let $(F_0,X_0) $ be the condition $ (\emptyset, T)$.  Suppose  $(F_e,X_e)$ is defined  such that  $F_e\subseteq C^{-1}(i)$ and
$(F_e, X_e)$ satisfied $R_{e-1}$ on color $i$.
By assumption, there exists an $(F, X)$ extending $(F_e, X_e)$ which satisfies $R_{e}$ on color $i$.
Let  $(F_{e+1}, X_{e+1})$ be the least such $(F, X)$.  Let $G=\bigcup \{F_e:e\in \omega\}$.
The construction guarantees that $G$ satisfies all requirements $R_e$.
Moreover $G$ is infinite by an argument similar to that in the proof of Proposition \ref{prop4}.
\end{proof}

We will derive a contradiction for Case II which   will occupy  the rest of this subsection.
Recall in this case that
  for any color $i<k$ and any infinite perfect tree $T$ not computing a  bounded enumeration of $S$,
there exist  $(F, X)$ with $(F\cup X)\subseteq T$ and
 $e$ such that $(F,X)$ is bad for $\mathcal{R}_e$ on color $i$.

 We first select $k$ families of finite perfect trees $\mcal{F}_0, \dots, \mcal{F}_{k-1}$ as follows:
Let $B_{-1}=\{\varepsilon_0\}$ where $\varepsilon_0$ is the root of the tree $2^{<\omega}$ and $X_{-1}=2^{<\omega}$.
To define  $\mcal{F}_0$, apply Case (II) to the color $0$ and  $T=2^{<\omega}$ to obtain   $(F_0, X_0)$ and $e(F_0)$ such that
$(F_0,X_0)$ is bad for $\mathcal{R}_{e(F_0)}$ on the color $0$.  Let $\mcal{F}_0=\{F_0\}$ and $B_{0}=\leaf(F_0)$.

Inductively, suppose that $\mcal{F}_{i}=\{F_i^{\sigma}: \sigma\in B_{i-1}\}$, $B_i=\bigcup \{\leaf(F): F\in \mcal{F}_i\}$
and $X_i=\bigcup \{U_{\sigma}: \sigma\in B_i\}$ are defined.
For each $\sigma\in B_i$, observe that $U_{\sigma}$ does not compute a bounded enumeration of $S$.
Apply the case assumption to the color $i+1$ and the tree $U_{\sigma}$ to conclude that
there exist  $(F_{i+1}^{\sigma}, X_{i+1}^{\sigma})$ with $F_{i+1}^{\sigma}\cup X_{i+1}^{\sigma}\subseteq U_{\sigma}$ and
 $e(F_{i+1}^{\sigma})$ such that $(F_{i+1}^{\sigma}, X_{i+1}^{\sigma})$ is bad for $\mathcal{R}_{e(F_{i+1}^{\sigma})}$ on the color $i+1$.
Let $\mcal{F}_{i+1} = \{F^\sigma_{i+1}:\sigma\in B_i\}$, $B_{i+1}=\bigcup \{\leaf(F): F\in \mcal{F}_{i+1}\}$
and $X_{i+1}=\bigcup \{X_{i+1}^{\sigma}: \sigma\in B_i\}$ which can clearly  be written as $\bigcup \{U_{\sigma}: \sigma\in B_{i+1}\}$.

We refer to the collection $\{\mcal{F}_0, \dots, \mcal{F}_{k-1}\}$ as a {\em $k$-\hierarchy}.
Informally,  a $k$-\hierarchy\ is a family of $k$-layers of finite perfect trees, such that above  the   leaves of  trees in one layer lie the trees in the next layer.
Let $B=\bigcup \{\leaf(F): F\in \mcal{F}_{k-1}\}$ be the set of leaves of the trees in the last layer and let $X=X_{k-1}$, the  forest with cone basis $B$.
Notice that for any tree $F$ in any $\mcal{F}_i$, every node $\sigma$ of $F$ is extended by some $\tau$ in $B$.
We  refer to $B$ as  a ``{\em cover} of $F$''.

The main property of a $k$-\hierarchy\ that we will exploit  is in  the following lemma
(note that the number $k$ in the $k$-\hierarchy\  is  the number of colors).

\begin{lemma}\label{lem12}
Let $\mcal{F}_0,\dots,\mcal{F}_{k-1}$ and $B$ be as above. Then for any $E$ which is a cover of $B$,
and for any function $g: E\to k$, there exist $i\in k$, $F\in\mcal{F}_{i}$ and a set $\t{E} \subseteq E$ which covers $F$ and
$\t{E}\subseteq g^{-1}(i)$.
\end{lemma}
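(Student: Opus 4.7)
The plan is to argue by contradiction, walking up the $k$-hierarchy and using up one color per layer until the color palette is exhausted. Suppose the conclusion fails: for every $i<k$ and every $F\in\mcal{F}_i$ no subset of $E\cap g^{-1}(i)$ covers $F$. By the definition of a cover, this means that for each such $F$ there is a node $\sigma\in F$ having no extension in $E$ of color $i$.

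I would then construct a chain $\tau_0\prec\tau_1\prec\cdots\prec\tau_{k-1}$ together with auxiliary nodes $\sigma_i\preceq\tau_i$, proceeding through the layers. Start with $F_0\in\mcal{F}_0$: pick $\sigma_0\in F_0$ having no extension in $E$ of color $0$, and let $\tau_0$ be any leaf of $F_0$ above $\sigma_0$, so that $\tau_0\in B_0$. By the construction of the hierarchy, the tree $F_1^{\tau_0}\in\mcal{F}_1$ lies entirely above $\tau_0$. Pick $\sigma_1\in F_1^{\tau_0}$ with no extension in $E$ of color $1$, and let $\tau_1\in\ell(F_1^{\tau_0})\subseteq B_1$ be a leaf above $\sigma_1$. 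Because $\sigma_1\succeq\tau_0\succeq\sigma_0$, every $E$-extension of $\sigma_1$ is an $E$-extension of $\sigma_0$, so $\sigma_1$ has no $E$-extension of any color in $\{0,1\}$. Iterating, at stage $i<k$ one obtains $\sigma_i\in F_i^{\tau_{i-1}}$ such that no $E$-extension of $\sigma_i$ carries a color in $\{0,1,\dots,i\}$, together with a leaf $\tau_i\in B_i$ above $\sigma_i$.

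After stage $k-1$ the node $\sigma_{k-1}$ has no $E$-extension of any color in $\{0,\dots,k-1\}=k$. On the other hand $\sigma_{k-1}$ extends to some leaf of $F_{k-1}^{\tau_{k-2}}$, which is a node of $B=B_{k-1}$, and since $E$ covers $B$ there is at least one $\tau\in E$ extending $\sigma_{k-1}$. That $\tau$ must carry some color in $k$, which is the required contradiction.

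The argument is essentially a pigeonhole over the $k$ layers, so I do not anticipate a real obstacle; the one bookkeeping point to state carefully is that the cumulative color-exclusion for $\sigma_i$ is preserved simply because $\sigma_i\succeq\sigma_{i-1}$ forces every $E$-extension of $\sigma_i$ to also be an $E$-extension of $\sigma_{i-1}$. The layered structure of $\mcal{F}_0,\dots,\mcal{F}_{k-1}$ is designed precisely so that this passage from layer $i-1$ to layer $i$, via a leaf of $F_{i-1}^{\tau_{i-2}}$ sitting in $B_{i-1}$ and then into the corresponding $F_i^{\tau_{i-1}}\in\mcal{F}_i$, can be carried out.
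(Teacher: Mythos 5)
Your argument is correct and takes essentially the same approach as the paper. The paper phrases the walk down the $k$-hierarchy as an induction on $k$ (at each step restricting the hierarchy, the cover $E$, and the color palette to the cone above a chosen leaf), while you unroll this induction into an explicit chain $\sigma_0\prec\tau_0\prec\sigma_1\prec\cdots\prec\sigma_{k-1}$ and close with a contradiction against $E$ covering $B$; the two are mathematically the same argument presented differently.
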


\begin{proof}
We prove the lemma by induction on $k$.  For $k = 1$, the conclusion clearly holds.
Suppose the conclusion holds for any $k-1$-hierarchy.  Let $F_0\in\mcal{F}_0$.
If for every $\sigma\in \leaf(F_0)$ there exists a $\tau\in E$ such that $\sigma\preceq \tau$ and $g(\tau) = 0$,
then we are done by taking $i=0$, $F=F_0$ and $\t{E}=\{\tau\in E: \tau$ extends some leaf in $F_0$ and $g(\tau) = 0\}$.

Suppose for some $\sigma\in \leaf(F_0)$, we have $g(\tau)\neq 0$ for every $\tau\in E$ and $\tau\succeq \sigma$.
Consider the $(k-1)$-\hierarchy\ obtained by restricting the original $k$-\hierarchy\ to the  cone $[\sigma]^\preceq$,
and let $E'$ be the restriction of $E$ to the same cone. Now $g$ restricted to $E'$ has range $\subseteq \{1,\dots,k-1\}$.
The conclusion follows by the inductive hypothesis.
\end{proof}

Let $d$ and $l$ be positive integers.  By an {\em $l$-partition of $d$}, we mean a partition $\{W_0,\cdots, W_{l-1}\}$ of $\{0,\dots,d-1\}$ (note that we do not require the sets in a partition to be nonempty).
Recall the notion of a blocking set (or hitting set) studied in combinatorial mathematics and computer science:
Given a family of finite sets $\{V_m\}_{m< d}$ and a finite set $U$, we say that $U$ is a {\em blocking set of} $\{V_m\}_{m< d}$
if for all $m<d$, $V_m\cap U\neq \emptyset$.

\begin{definition} \label{defdisperse}
Let $d, l>0$ and $\{V_m\}_{m< d}$ be a family of nonempty  finite sets.
We say that $\{V_m\}_{m< d}$ is $l$-\emph{\disperse} if for every $l$-partition $\{W_0,\dots, W_{l-1}\}$ of $d$, there exists an $i<l$ such that $W_i\ne\emptyset$ and $\bigcap_{m\in W_{i}} V_m = \emptyset$.

We say that $\{V_m\}_{m< d}$ is $(k,l)$-\emph{\disperse} if for every $k$-partition of $d$,
say $W_0,\cdots,W_{k-1}$, there exists a $k'<k$ such that $\{V_m\}_{m\in W_{k'}}$ is $l$-\disperse.
\end{definition}

\begin{lemma} \label{tt22vswkllem17}
\
\begin{enumerate}
\item [{\rm (i)}] $\{V_m\}_{m< d}$ is $l$-\disperse \ if and only if it has no blocking set of size $\leq l$,
i.e., for any $U$ with $|U|\leq l$, $U\cap V_{m'}=\emptyset$ for some $m'<d$.
\item [{\rm (ii)}] $\{V_m\}_{m< d}$ is $k\cdot l$-scattered if and only if it is $(k, l)$-\disperse.
\end{enumerate}
\end{lemma}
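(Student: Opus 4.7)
The plan is to prove (i) first and then derive (ii) from (i), since both parts of (ii) reduce to straightforward manipulations of blocking sets once (i) has been established.

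For (i), I would argue by contrapositive in both directions. For the ``only if'' direction, assume $\{V_m\}_{m<d}$ admits a blocking set $U$ of size $l'\le l$. Enumerate $U=\{u_0,\dots,u_{l'-1}\}$ and construct an $l$-partition $\{W_0,\dots,W_{l-1}\}$ of $\{0,\dots,d-1\}$ by assigning each $m<d$ to $W_i$, where $i$ is the least index in $\{0,\dots,l'-1\}$ with $u_i\in V_m$ (this exists because $U$ blocks every $V_m$), and setting $W_i=\emptyset$ for $i\ge l'$. Then $u_i\in\bigcap_{m\in W_i}V_m$ for every nonempty $W_i$, contradicting $l$-scatteredness. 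Conversely, if $\{V_m\}_{m<d}$ is not $l$-scattered, fix a witnessing $l$-partition $\{W_0,\dots,W_{l-1}\}$ such that every nonempty $W_i$ has $\bigcap_{m\in W_i}V_m\ne\emptyset$; choosing one element $u_i$ from each such intersection yields a blocking set of size at most $l$.

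For (ii), I would rephrase both sides via (i): ``$\{V_m\}_{m<d}$ is $kl$-scattered'' becomes ``no blocking set of $\{V_m\}_{m<d}$ has size $\le kl$'', and ``$\{V_m\}_{m<d}$ is $(k,l)$-scattered'' becomes ``for every $k$-partition $\{W_0,\dots,W_{k-1}\}$ of $d$, some $\{V_m\}_{m\in W_{k'}}$ has no blocking set of size $\le l$.'' For the ``only if'' direction, assume $\{V_m\}_{m<d}$ is $kl$-scattered and suppose for contradiction a $k$-partition $\{W_0,\dots,W_{k-1}\}$ yields a blocking set $U_{k'}$ of $\{V_m\}_{m\in W_{k'}}$ of size $\le l$ for each $k'<k$; then $\bigcup_{k'<k} U_{k'}$ is a blocking set of $\{V_m\}_{m<d}$ of size $\le kl$, contradicting (i). For the ``if'' direction, suppose $U$ is a blocking set of $\{V_m\}_{m<d}$ of size $\le kl$, split $U$ arbitrarily into $k$ pieces $U_0,\dots,U_{k-1}$ each of size $\le l$, and partition $\{0,\dots,d-1\}$ by placing $m$ into $W_{k'}$ where $k'$ is the least index with $U_{k'}\cap V_m\ne\emptyset$ (well-defined because $U$ blocks every $V_m$). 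Each $U_{k'}$ then blocks $\{V_m\}_{m\in W_{k'}}$, so by (i) no part is $l$-scattered, contradicting $(k,l)$-scatteredness.

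The proof is purely combinatorial and presents no real obstacle; the only point requiring mild care is the well-definedness of the partitions constructed from a blocking set, which in each case rests on the defining property that every $V_m$ meets the blocking set. Once (i) is in place, both directions of (ii) become one-line applications of the ``union of blocking sets is a blocking set'' observation and its converse via partitioning.
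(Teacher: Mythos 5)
Your proof is correct and follows essentially the same route as the paper: argue (i) via the correspondence between blocking sets and partitions in which every nonempty part has nonempty intersection, then derive (ii) from (i) by taking unions of blocking sets in one direction and splitting a blocking set into $k$ subsets of size $\le l$ in the other. The paper leaves the second direction of (ii) as "immediate," and the splitting argument you give is exactly what is intended.
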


\begin{proof}
To prove (i), first assume that $\{V_m\}_{m<d}$ is $l$-scattered.  Suppose for the sake of contradiction that for some least $l'\le l$ and $U = \{\rho_0,\dots,\rho_{l'-1}\}$,
$U\cap V_m\ne\emptyset$ for all $m< d$.  Consider the $l'$-\partition\ $\{W_0,\dots, W_{l'-1}\}$ of $d$ where $W_{i} = \big\{m: \rho_{i}\in V_m\big\}\setminus (W_0\cup\dots\cup W_{i-1})$.
Now if $W_i\ne\emptyset$ then $\bigcap_{m\in W_{i}} V_m\ne\emptyset$ since it contains $\rho_{i}$.
This contradicts the $l$-scattering property of $\{V_m\}_{m<d}$.

Conversely, suppose $\{V_m\}_{m<d}$ is not $l$-scattered. Let $W_0,\dots, W_{l-1}$ be an $l$-partition of $d$ that witnesses this.
This means that $\bigcap_{m\in W_{i}} V_m\ne\emptyset$ for all $i<l$.
Let $U=\{\rho_{i}: \rho_{i}\in \bigcap_{m\in W_{i}} V_m\}$.  Then $|U|\le l$ and $U\cap V_m\ne\emptyset$ for each $m<d$.

For (ii), assume that $\{V_m\}_{m< d}$ is $k\cdot l$-scattered but not $(k,l)$-scattered.
Let $W_0,\dots,W_{k-1}$ be a $k$-partition of $\{d\}$ witnessing this failure, i.e.~for every $k'\in k$,
$\{V_m\}_{m\in W_{k'}}$ is not $l$-\disperse.  By (i), this means that for each $k'\in k$,
there exists a blocking set $U_{k'}$ of size $\leq l$.  Let $U=\bigcup_{k'<k}U_{k'}$.
Then $|U|\leq k\cdot l$ which is a blocking set of $\{V_m\}_{m< r}$, a contradiction.
The proof of the converse is immediate and left to the reader.
\end{proof}

We now return  to  deriving a contradiction under the assumption of Case (II). Here is the current status:  There is a $k$-\hierarchy\ $\mcal{F}_0,\dots,\mcal{F}_{k-1}$, a  cover  $B$ and a forest
$X$  with cone basis $B$,  satisfying the following:   $X$ does not compute  a bounded enumeration of $S$, and  for any $F\in \mcal{F}_i$ there exist $Y=Y(F)\subseteq X$ and $e(F)$ such that
  $(F,Y)$ is bad for $R_{e(F)}$ on the color $i$.

For each $n\in\omega$ and $V\subset 2^{<n}$, consider the following $\Pi_1^{0,X}$-class of $k$-colorings $\h{C}$ on $2^{<\omega}$:
\begin{align}
Q^n_V =\big\{\h{C}:\ \ &\text{For all }i\in k \text{ and } F\in \mcal{F}_i\text{ and for all finite perfect \treeset s\ } \t{F}\succ F \text{ in color } i
\nonumber,\\
&\text{ if } \t{F}\subseteq (F\cup Y(F)) \text{ and } \Psi^{\t{F}}_{e(F)}(n)\downarrow\text{ then }
\Psi^{\t{F}}_{e(F)}(n)\cap V\ne\emptyset\ \big\}. \nonumber
\end{align}
Suppose $|B| =u$.
We first make a claim.

\begin{claim} \label{claim2}
$Q^n_{S\cap 2^{<n}}\ne\emptyset$ for all $n\in\omega$.
\end{claim}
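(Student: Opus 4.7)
The plan is to verify the claim by exhibiting an explicit element of $Q^n_{S\cap 2^n}$, namely the original coloring $C$ itself (which is available since we are working with $X$-computable colorings and $C$ is the instance given). In other words, I will show that $C \in Q^n_{S\cap 2^n}$ for every $n$, using the badness assumption on each $(F, Y(F))$ to block the only way $C$ could fail the defining condition.

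I will argue by contradiction. Suppose $C \notin Q^n_{S\cap 2^n}$. Unwinding the definition, there exist $i<k$, a tree $F\in\mathcal{F}_i$, and a finite perfect tree $\widetilde F\succ F$ that is monochromatic in color $i$ under $C$ (i.e.\ $\widetilde F\subseteq C^{-1}(i)$), with $\widetilde F\subseteq F\cup Y(F)$, such that $\Psi_{e(F)}^{\widetilde F}(n)\!\downarrow$ and $\Psi_{e(F)}^{\widetilde F}(n)\cap S = \emptyset$ (recall $\Psi_{e(F)}^{\widetilde F}(n)\subseteq 2^n$, so this is the relevant intersection).

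Next I would upgrade the pair $(\widetilde F, \cdot)$ into a genuine condition extending $(F, Y(F))$. Since $Y(F)$ is a forest with cone basis $\leaf(F)$, say $Y(F)=\bigcup_{\tau\in\leaf(F)} U_\tau$, and since each leaf of $\widetilde F$ extends a unique leaf $\tau$ of $F$ and lies in $U_\tau$, I can set
\[
\widetilde X \;=\; \bigcup_{\sigma\in\leaf(\widetilde F)} U_{\tau(\sigma)}\cap [\sigma]^{\preceq},
\]
which is a forest with cone basis $\leaf(\widetilde F)$ contained in $Y(F)\subseteq X$; in particular $\widetilde X$ does not compute a bounded enumeration of $S$. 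Thus $(\widetilde F,\widetilde X)$ is a legitimate condition extending $(F, Y(F))$, and it satisfies $\mathcal R_{e(F)}$ on color $i$ via the $\Sigma^0_1$-clause, the witness being $n$ together with $\Psi_{e(F)}^{\widetilde F}(n)\cap S=\emptyset$.

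This directly contradicts the choice of $(F, Y(F))$ as bad for $\mathcal R_{e(F)}$ on color $i$, completing the proof. I do not anticipate a serious obstacle here: the only subtlety is checking that a finite perfect tree $\widetilde F\subseteq F\cup Y(F)$ can always be completed to a full condition by using the perfect subtrees carved out of $Y(F)$ above the leaves of $\widetilde F$, which is immediate from the forest structure of $Y(F)$ inherited from the $k$-hierarchy construction.
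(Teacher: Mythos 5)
Your proposal is correct and follows essentially the same route as the paper: show $C$ itself lies in $Q^n_{S\cap 2^n}$, obtain a witness $(\t F, i, F)$ from the assumed failure, and derive a contradiction with $(F,Y(F))$ being bad for $\mcal{R}_{e(F)}$ on color $i$. The only difference is that you spell out explicitly the forest $\widetilde X$ needed to upgrade $\t F$ into a bona fide condition extending $(F,Y(F))$ — a detail the paper leaves implicit — and this step is carried out correctly.
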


\noindent  {\it Proof of Claim.}
Suppose otherwise and let $n\in\omega$ be such that $Q^n_{S\cap 2^{<n}}=\emptyset$. Then in particular the given coloring $C$ is not in $Q^n_{S\cap 2^n}$.
Thus there exist $i\in k, F\in \mcal{F}_i$,  $\t{F}\succeq F$ and an index $e=e(F)$ such that
$\t{F}\subseteq (F\cup Y(F))$, $\t{F}\subseteq C^{-1}(i)$, $\Psi^{\t{F}}_{e}(n)\downarrow$ and $\Psi^{\t{F}}_{e}(n)\cap S\cap 2^n=\emptyset$.
However, this contradicts the assumption that $e(F)$ witnesses $(F,Y(F))$ being bad on the color $i$ of $C$.
\qed

\bigskip

Let $l =\max_{F\in \mcal{F}_i,i\in k}\{l(F):\Psi^F_{e(F)} \text{ is an\ } l(F)\text{-enumeration}\}$.
We divide Case II  into two subcases, similar to Case 2 and Case 3 in Theorem \ref{th3}.
\bigskip

\noindent\textbf{Case 1.} For some $n, d\in\omega$, there exists a $(k^u, l)$-\disperse\ collection $\{V_m\}_{m\leq d}$ of subsets of $2^n$ such that
$Q^n_{V_m}\ne\emptyset$ for each $m\leq d$.

\begin{claim} \label{claimCase1}
Suppose $\tt_k^1$ admits $\Pi_1^0$-class avoidance of  bounded enumeration.
There exist a cover $E\subseteq X$ of $B$, a forest $Z \subseteq X$ with cone basis $E$,
a set $D\geq_T Z$ not computing a bounded enumeration of $S$, and for each $m<d$, there exist a $\Pi^{0,D}_1$-class $Q^n_{V_m,d-1}\subseteq Q^n_{V_m}$  and
a coloring $\h{C}_{m}\in Q^n_{V_m,d-1}$, such that for all $\tau\in E$,
there is a color $i_{m,\tau}\in k$ with $Z\cap [\tau]^{\preceq} \subseteq \h{C}_m^{-1}(i_{m,\tau})$.
\end{claim}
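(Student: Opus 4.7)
The plan is to apply $\Pi^0_1$-class avoidance of bounded enumeration for $\tt^1_k$ iteratively so as to refine both the forest and the classes $Q^n_{V_m}$ at the same time. We construct by induction on $m=0,1,\ldots,d-1$ a sequence $(D_m,E_m,Z_m,\hat C_0,\ldots,\hat C_{m-1})$ in which $Z_m\subseteq X$ is a forest with cone base $E_m$ still covering $B$, $D_m\geq_T Z_m$ admits no $D_m$-computable bounded enumeration of $S$, and for each $m'<m$ we have $\hat C_{m'}\in Q^n_{V_{m'}}$ together with a color $i_{m',\tau}$ for every $\tau\in E_m$ witnessing $Z_m\cap[\tau]^\preceq\subseteq \hat C_{m'}^{-1}(i_{m',\tau})$. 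Initialize with $D_0:=X$, $E_0:=B$, $Z_0:=X$.

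The inductive step $m\to m+1$ is itself a secondary iteration over the current cover $E_m=\{\sigma_1,\ldots,\sigma_p\}$. Let $U^{(m)}_\sigma$ be the infinite perfect subtree of $Z_m$ above $\sigma$, and let $\phi_j:2^{<\omega}\to U^{(m)}_{\sigma_j}$ be a $Z_m$-computable isomorphism. Set $Q^{(0)}:=Q^n_{V_m}$, which is $\Pi^{0,D_m}_1$ since $X\leq_T D_m$, and set $D^{(0)}:=D_m$. At sub-step $j$ the pull-back $\{\hat C\circ\phi_j:\hat C\in Q^{(j-1)}\}$ is a nonempty $\Pi^{0,D^{(j-1)}}_1$-class of $\tt^1_k$-instances, to which $\Pi^0_1$-class avoidance of bounded enumeration is applied. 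This yields a coloring $\hat C^{(j)}_m\in Q^{(j-1)}$ together with a homogeneous perfect subtree $T_{\sigma_j}\subseteq U^{(m)}_{\sigma_j}$ in some color $i_{m,\sigma_j}$ such that $D^{(j)}:=D^{(j-1)}\oplus T_{\sigma_j}$ computes no bounded enumeration of $S$. We then refine by $Q^{(j)}:=\{\hat C\in Q^{(j-1)}:\hat C(\rho)=i_{m,\sigma_j}\text{ for all }\rho\in T_{\sigma_j}\}$, which is nonempty (it contains $\hat C^{(j)}_m$) and $\Pi^{0,D^{(j)}}_1$.

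After the $p$-th sub-step, set $\hat C_m:=\hat C^{(p)}_m$, $E_{m+1}:=\{\text{root of }T_{\sigma_j}:1\leq j\leq p\}$, $Z_{m+1}:=\bigcup_j T_{\sigma_j}$ and $D_{m+1}:=D^{(p)}$. Every $\hat C\in Q^{(p)}$ is constant on each $T_{\sigma_j}$ in color $i_{m,\sigma_j}$ by construction; in particular $\hat C_m$ is. The monochromaticity colors witnessing the induction hypothesis for the previous $\hat C_{m'}$, $m'<m$, persist on $Z_{m+1}$ because each $T_{\sigma_j}\subseteq Z_m\cap[\sigma_j]^\preceq$ lies entirely inside one of the cones of $Z_m$ that is already $\hat C_{m'}$-monochromatic. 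After $d$ outer iterations we take $D:=D_d$, $E:=E_d$, $Z:=Z_d$, and $Q^n_{V_m,d-1}:=Q^{(p)}$ produced at step $m$, which is $\Pi^{0,D}_1$ since $D_{m+1}\leq_T D$.

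The principal obstacle is maintaining, across the nested induction, both the $\Pi^0_1$-class structure of the refined $Q^{(j)}$'s and the bounded-enumeration-avoidance of the growing oracle. The key observation that makes this go through is that the refinement $Q^{(j)}\subseteq Q^{(j-1)}$ depends only on the homogeneous tree $T_{\sigma_j}$ together with the scalar color $i_{m,\sigma_j}$ and not on the whole coloring $\hat C^{(j)}_m$, so $Q^{(j)}$ is $\Pi^{0,D^{(j)}}_1$ and nonempty, while the oracle only grows by adjoining the $\tt^1_k$-solutions handed to us by $\Pi^0_1$-class avoidance, each of which is guaranteed not to introduce any new bounded enumeration of $S$.
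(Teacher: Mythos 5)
Your proof is correct and follows essentially the same double induction as the paper: an outer loop over $m<d$ and an inner, cone-by-cone loop over the current cover, at each inner step applying $\Pi^0_1$-class avoidance to the restricted (or pulled-back) class, adjoining the resulting homogeneous subtree to the oracle, and then passing to the subclass of colorings that agree with the chosen color on that subtree. The only cosmetic difference is that you phrase the inner step via an isomorphism $\phi_j$ onto the cone rather than the paper's restriction $Q^n_{V_m}\!\uhr U_v$; these are equivalent, and the nesting, the persistence-of-monochromaticity argument, and the tracking of the growing oracle all match the paper's proof.
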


\noindent {\it Proof of Claim.} The proof is similar to the ``moreover'' part of Proposition \ref{prop4}.

List all members of $B$ from left to right as $\sigma_0,\dots,\sigma_{u-1}$ and let $U_v=X\cap [\sigma_v]^{\preceq}$.
Then $X=\bigcup\{U_v: v<u\}$.
We define $\hat{C}_m$ by induction on $m<d$ and for each $m$, we define $i_{m,v}$ and a perfect tree $Y_{m,v}$ with root $\tau_{m,v}$ by induction on $v< u$.

For $m = 0$ and $v=0$: Notice that $U_{0}\leq_T X$ does not compute a bounded enumeration of $S$.
Consider $Q^n_{V_0}\uhr U_{0}=\{\h{C}\uhr U_{0}: \h{C}\in Q^n_{V_0}\}$ which is a $\Pi^{0,X}_1$-class.
By the property of $\Pi_1^0$-class avoidance of bounded enumeration, there is     an instance $\h{C}_{0, 0}\in Q^n_{V_0}$
(strictly speaking, $\h{C}_{0,0}$ is an extension of some element in $Q^n_{V_0}\uhr U_{0}$),
and a solution $Y_{0, 0}$ which is an infinite perfect  subtree  of $U_{0}$, and an $i_{0,0}\in k$ such that $Y_{0,0}\subseteq \h{C}_{0,0}^{-1}(i_{0,0})$.
Then  $D_{0,0}=X\oplus Y_{0,0}$ does not compute a bounded enumeration of $S$. Let $\tau_{0,0}$ be the root of $Y_{0,0}$.
Let $Q^n_{V_0, 0} = \{\h{C}\in Q^n_{V_0}: Y_{0,0}\subseteq \h{C}^{-1}(i_{0,0})\}$.
This is a nonempty $\Pi_1^{0,D_{0,0}}$-class of colorings on $2^{<\omega}$ since it contains $\h{C}_{0,0}$.
For $m = 0$ and $v=1$, replace $Q^n_{V_0}$ by $Q^n_{V_0,0}$ and $U_0$ by $U_1$ respectively, and repeat the above procedure,
yielding an infinite perfect tree $Y_{0,1}\subseteq  U_1$, a coloring $\h{C}_{0, 1}\in Q^n_{V_0,0}$ and an $i_{0,1}\in k$ such that $Y_{0,1}\subseteq U_{1}\cap \h{C}_{0,1}^{-1}(i_{0,1})$
and $D_{0,1}=D_{0,0}\oplus Y_{0,1}$ does not compute a bounded enumeration of $S$.
Note that $C_{0,1}\in Q^n_{V_0,0}$ automatically implies $Y_{0,0}\subseteq C_{0,1}^{-1}(i_{0,0})$.
Let $Q^n_{V_0, 1} = \{\h{C}\in Q^^n_{V_0,0}: Y_{0,1}\subseteq \h{C}^{-1}(i_{0,1})\}$ and let $\tau_{0,1}$ be the root of $Y_{0,1}$.
Again this is a nonempty $\Pi_1^{0,D_{0,1}}$-class of colorings on $2^{<\omega}$ since $C_{0,1}\in Q^n_{V_0,1}$.
Therefore, after $v$ traverses through $0,\dots,u-1$, we obtain a set $E_0=\{\tau_{0,v}: v<u\}$ which is a cover of $B$,
a $Y_0 = \bigcup \{Y_{0,v}: v<u\}$ which is a forest with cone basis $E_0$, a set $D_0=X\oplus Y_0$ which does not compute a bounded enumeration of $S$,
 a coloring $\hat{C}_0\in Q^n_{V_0}$ and a color $i_{0,v}$ for each $v< u$ such that $Y_{0,v} \subseteq \hat{C}_{0}^{-1}(i_{0,v})$ for all $v<u$.

Now for $m = 1$, we repeat the above steps with $X$ replaced by $Y_0$, and obtain
\begin{enumerate}

\item A cover $E_1=\{\tau_{1,v}:v<u\}$  of $E_0$;
\item A forrest $Y_1 = \{Y_{1,v}: v<u\}\subseteq Y_0$  with cone basis $E_1$;
\item   $D_1=D_0\oplus Y_1$ which does not compute a bounded enumeration of $S$;
\item A   coloribg $\hat{C}_1\in Q^n_{V_1}$ and
\item A color $i_{1,v}$ for each $v< u$ such that $Y_{1,v} \subseteq \hat{C}_1^{-1}(i_{1,v})$ for all $v< u$.
\end{enumerate}

Inductively repeating these steps for $m\le d-1$, we arrive at a $D =D_{d-1}$ not computing a bounded enumeration of $S$,
a cover $E=E_{d-1}$  of $B$ and  a forest $Z = Y_{d-1}\subseteq X$  with cone basis $E$,
together with a coloring $\hat{C}_m\in Q^n_{V_m}$. for each $m<d$.  and a color $i_{m,v}$ for $v<u$, such that $Y_{m,v}\subseteq \hat{C}_m^{-1}(i_{m,v})$.
\qed

\bigskip

Define a family of subsets $\{W_g: g\in k^u\}$ of $\{0,\cdots,d-1\}$ as follows:
For each $m<d$, put $m\in W_g$ if the function $v\mapsto i_{m,v}$ is $g$ (recall that members in $k^u$ are functions from $u$ to $k$).
  Then $\{W_g: g\in k^u\}$ is a $k^u$-partition of $d$.
 By the $(k^u,l)$-scattering  of $\{V_0,\dots,V_{d-1}\}$, there exists a $g^*$ such that$\{V_m: m\in W_{g^*}\}$ is $l$-\disperse.

Applying  Lemma \ref{lem12} to $E$ and $g^*$, which may be considered to be a map from $E$ to $k$,
we  conclude that there exist  a $k^*\in k$, an $F^*\in \mcal{F}_{k^*}$ and an  $E^*\subseteq E$ such that $E^*$ covers $F^*$ and $E^*\subseteq (g^{*})^{-1}(k^*)$.
By the definition of $W_{g^*}$, we have that for all $m\in W_{g^*}$ and $v<u$, if $\tau_{d-1,v}\in E^*$ then $i_{m,v} = k^*$.
Hence, by the definition of $i_{m,v}$,
$Y_{d-1,v}\subseteq Y_{m,v}\subseteq \h{C}_m^{-1}(k^*)$.  Let $Y^*=\bigcup \{Y_{d-1,v}: \tau_{d-1,v}\in E^*\}$. Note that $(F^*,Y^*)$ is a condition.
By the definition of $Q^n_{V_m}$ and since $\h{C}_m\in Q^n_{V_m}$, we have:

\begin{enumerate}

 \item [(*)] For any $m\in W_{g^*}$ and  $F'\succeq F^*$ with $F'\setminus F^* \subseteq Y^*$,
if $\Psi_{e(F^*)}^{F'}(n)\downarrow$ then $\Psi_{e(F^*)}^{F'}(n)\cap V_m\ne\emptyset$.
\end{enumerate}
.
Suppose that there is an  $F'\succeq F^*$ with $F'\setminus F^* \subseteq Y^*$ and $\Psi_{e(F^*)}^{F'}(n)\downarrow$.
 Then $|\Psi_{e(F^*)}^{F'}(n)|\leq l$ by the definition of $l$, and so $\{W^*_{l'}\}_{l'\le l}$, where
$W^*_{l'}=\{m: m\in W_{g^*}\wedge |\Psi_{e(F^*)}^{F'}(n)|=l'\}$,  is an $l$-partition of $W_{g^*}$. Furthermore, there is a unique $l^*\le l$ such that
$W^*_{l^*}\ne \emptyset$, and in which case $W^*_{l^*}=W_{g^*}$.
Then by the $l$-scattering of  $\{V_m\}_{m\in W_{g^*}}$,  $\bigcap_{m\in W_{g^*}} V_m=\emptyset$.
Therefore, by Lemma \ref{tt22vswkllem17} (i),  $\Psi_{e(F^*)}^{F'}(n)\uparrow$
 for any $F'\succeq F^*$ with $F'\setminus F^* \subseteq Y^*$.
Thus $(F^*,Y^*)$ satisfies $\mcal{R}_{e(F^*)}$.  However $(F^*,Y^*)\leq (F^*,Y)$ and $(F^*, Y)$ is bad for $\mcal{R}_{e(F^*)}$ by assumption, which is a
 contradiction.

\medskip

\noindent\textbf{Case 2.} Otherwise, i.e.~for any $n\in\omega$: $\big\{V\subseteq 2^n: Q^n_V\ne\emptyset \big\}$ is not $(k^u,l)$-\disperse.

By Lemma \ref{tt22vswkllem17} (ii),  $\{V\subseteq 2^n: Q^n_V\ne\emptyset \}$ is not $k^u\cdot l$-\disperse \ for each $n$.
Therefore there is a blocking set $U_n\subseteq 2^n$ of size at most $k^u\cdot l$.
For every $n$ compute (the least) stage $s$ and (the canonical least) finite set $U_n\subseteq 2^n$ with size $k^u\cdot l$ such that
for every $V\subseteq 2^n$, either $Q^n_V[s]=\emptyset$ or $U_n\cap V\neq \emptyset$. Then define $g(n) = U_n$.
Since $Q^n_V $ is $\Pi_1^{0,X}$, the assertion ``$Q^n_V=\emptyset$'' is $\Sigma^{0,X}_1$ and  hence $g\leq_T X$.
By Claim \ref{claim2}, $Q^n_{S\cap 2^n}\ne\emptyset$ for all $n\in\omega$, and so $S\cap 2^n\cap U_n\neq \emptyset$.
Thus, $g(n)$ is a $k^u\cdot l$-enumeration of $S$, contradicting  the assumption that $X$ does not compute a bounded enumeration of $S$.
\qed

\vskip.2in

The above produces a contradiction for Case II that we need.

\subsection{$\Pi_1^0$-class avoidance of bounded enumeration of $\tt^1_k$}
\label{secpi10}

The final part of the proof of our main result Theorem \ref{th101} is to show  the following $\Pi_1^0$-class avoidance property of $\tt_2^1$:

\begin{theorem} \label{th102}
$\tt_2^1$ admits $\Pi_1^0$-class avoidance of bounded enumeration.
\end{theorem}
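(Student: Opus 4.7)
The plan is to run a Mathias-style forcing whose generic object is a pair $(G, C^*)$ consisting of an infinite perfect monochromatic tree $G$ and a coloring $C^* \in Q$ with $G \subseteq (C^*)^{-1}(i)$ for some $i \in 2$. Conditions are triples $(F, i, \mcal{Q})$ in which $F$ is a finite perfect tree of committed color $i$, and $\mcal{Q} \subseteq Q$ is a nonempty $\Pi^{0,D}_1$-subclass such that every $C \in \mcal{Q}$ satisfies $F \subseteq C^{-1}(i)$ and densely splits in color $i$ above every leaf of $F$. An initial condition is produced by a Case~I / Case~II dichotomy analogous to the one following Lemma~\ref{lem18} in the proof of Theorem~\ref{th8}: either such a condition $(\varepsilon, i, \mcal{Q}_0)$ exists, or else a $k$-hierarchy argument together with Lemma~\ref{lem12} extracts a $D$-computable bounded enumeration of $S$, contradicting the hypothesis.

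From a condition $(F, i, \mcal{Q})$ we meet each requirement $R_e$---``$\Psi_e^{D \oplus G}$ is not a bounded enumeration of $S$''---by adapting the Case~1--Case~3 analysis of Theorem~\ref{th3}. For each $n \in \omega$ and $V \subseteq 2^n$ define the $\Pi^{0,D}_1$-subclass
\[
\mcal{Q}^n_V = \bigl\{C \in \mcal{Q} : \forall F' \succeq F\,\bigl(F' \setminus F \subseteq C^{-1}(i) \wedge \Psi_e^{F'}(n)\!\downarrow \Rightarrow \Psi_e^{F'}(n) \cap V \ne \emptyset\bigr)\bigr\}.
\]
An analog of Claim~\ref{claim2} shows $\mcal{Q}^n_{S \cap 2^n} \ne \emptyset$ for every $n$. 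If for some $n$ the family $\{V \subseteq 2^n : \mcal{Q}^n_V \ne \emptyset\}$ is $(k^*, l)$-\disperse\ for appropriate parameters (with $l$ bounding the output size of $\Psi_e$), a blocking-set argument as in Case~1 of Theorem~\ref{th8}, combined with the Low Basis Theorem applied to select the refined subclass, produces an extension forcing the $\Pi^0_1$-clause of $R_e$. If instead scattering fails uniformly in $n$, hitting sets of bounded size for $\{V : \mcal{Q}^n_V \ne \emptyset\}$ can be computed from $D$ and, by Lemma~\ref{tt22vswkllem17} and the analog of Claim~\ref{claim2}, always meet $S \cap 2^n$, yielding a $D$-computable bounded enumeration of $S$---again a contradiction. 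The $\Sigma^0_1$-clause of $R_e$ is handled by direct finite extension when some $F' \succeq F$ in color~$i$ already makes $\Psi_e^{F'}(n)\!\downarrow$ miss $S$.

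The tree $G = \bigcup_e F_e$ is forced to be infinite and perfect by interleaving splitting requirements exactly as in Proposition~\ref{prop4}, and the coloring $C^* \in \bigcap_e \mcal{Q}_e$ exists by compactness of the intersected $\Pi^{0,D}_1$-classes. The main obstacle will be preserving, through each restriction from $\mcal{Q}$ to $\mcal{Q}^n_V$, the splitting-in-color-$i$ promise above the leaves of $F$: this density may fail after restriction, and recovering it requires redeploying the $k$-hierarchy machinery of Section~\ref{secreducing} at every forcing step rather than only once. The two-color hypothesis is essential here, since when splitting in color $i$ collapses inside $\mcal{Q}^n_V$, one exploits the preliminary dichotomy to switch to color $1 - i$ and recover a valid extension---an option unavailable in the general $k$-color setting, whose avoidance property (Theorem~\ref{th100}) is instead obtained by an induction on $k$ using the present two-color result as its base case.
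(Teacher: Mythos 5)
Your overall scaffolding (Mathias-style forcing, $\Pi^0_1$-subclasses $\mcal{Q}^n_V$, the scattered/hitting-set dichotomy per requirement, and the reduction of the $k$-color case to $k=2$ by induction as in Theorem~\ref{th100}) is in the right family, and you do correctly identify the central obstacle: restricting $\mcal{Q}$ to $\mcal{Q}^n_V$ can destroy density of the committed color $i$ above $\leaf(F)$. But the proposed fix does not close the gap, and the condition structure you adopt cannot carry the argument. Three specific problems.

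First, the invariant ``every $C\in\mcal{Q}$ densely splits in color $i$ above $\leaf(F)$'' is $\Pi^0_2$, not $\Pi^0_1$, so it cannot be built into a $\Pi^{0,D}_1$-class, and as a side promise it is precisely what collapses under restriction. The paper avoids this by never asserting density inside the class: it separates the $\Pi^0_1$ object $\mbq$ from the density \emph{predictions} via the family $\mcal{G}_{\mbq}$ of functions $g:2^{<\omega}\to 2^d$ (Lemma~\ref{lem33}), and the forcing invariant is the purely combinatorial ``sufficiency'' clause (v) of Definition~\ref{defcondition}, which quantifies over $g\in\mcal{G}_{\mbq}$ and is preserved by Type~I and Type~II extensions. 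No such device appears in your proposal.

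Second, ``switch to color $1-i$ when color $i$ collapses'' is not available once a condition has committed a finite perfect $F$ to color $i$: any extension must have $F'\succeq F$, and a tree in color $1-i$ cannot extend a nonempty $F$ in color $i$. For this reason the paper's conditions are not single preconditions but tuples $(\{\mcal{F}_{i,I}\}_{i,I},\mcal{I},d,E,\mbq)$ tracking both colors (and all index sets $I$) simultaneously, with the actual color chosen only at the end by following a path through the tree of preconditions. Your conditions carry a single $(F,i)$ and cannot be reorganized this way without effectively redefining them into the paper's notion.

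Third, invoking the $k$-hierarchy / Lemma~\ref{lem12} machinery ``at every step'' does not repair density inside a $\Pi^0_1$-class: that machinery is about a single fixed coloring $C$ and a fixed forest, and its Case~II resolution (Claim~\ref{claimCase1}) already \emph{uses} $\Pi^0_1$-class avoidance for $\tt^1_k$, so deploying it in the proof of Theorem~\ref{th102} would be circular. The missing idea is the paper's Type~II extension: pass from $\mbq$ to a $\Pi^0_1$-subclass $\mbq^*\subseteq\mbq^m$ of \emph{higher-dimensional coloring vectors} (indexed by $\mcal{I}\times\mcal{K}$), thinning the Mathias side by intersecting color requirements across $m$ independently chosen coloring coordinates, while the sufficiency invariant and the $(2^{ud},\t{l})$-scattering computation in Lemma~\ref{lemcore} guarantee this thinning both preserves the condition structure and forces $\mcal{R}_{e(p)}$. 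Without the growing-dimension vectors, the sufficiency invariant, and the simultaneous-color bookkeeping, the forcing in your proposal cannot be carried past the first requirement whose restriction kills density.
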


Theorem \ref{th100} follows from repeated applications of Theorem \ref{th102} in the usual way.
\classavoidance*

\begin{proof}
Let $Q$ be a nonempty $\Pi_1^0$-class of $k$-colorings of $2^{<\omega}$.
For each $C\in Q$, define a $2$-coloring $\h{C}$ such that pairs of nodes with $C$-colors in
$\{1,\dots,k-1\}$ are colored 1 by $\h{C}$, while pairs with $C$-color 0 retain their color.
This yields a $\Pi^0_1$-class $\h{Q}$ of $2$-colorings.
By Theorem \ref{th102}, there is a $C_0\in Q$ such that $\h{C}_0\in\h{Q}$ has a solution $\h{T}$,
i.e.~homogeneous for $\h{C}_0$ that does not compute a bounded enumeration of $S$.
If $\h{T}\subseteq \h{C}_0^{-1}(0)$, we are done. If $\h{T}\subseteq \h{C}_0^{-1}(1)$,
consider $\tilde{Q} = \{C\restriction \h{T}: C\in Q\wedge \h{T}\subseteq C^{-1}(\{1,\dots,k-1\})\}$.
This is a $\Pi_1^{0, \h{T}}$-class of $(k-1)$-colorings of $\h{T}$, and $\tilde{Q}\ne\emptyset$
since $C_0\restriction \h{T}\in \t{Q}$. The conclusion follows from inductive hypothesis.
\end{proof}

We now proceed with the proof of Theorem \ref{th102}.  First, we introduce the combinatorial notions which will be  used subsequently,
%in subsection \ref{sectcomb}
%(Definitions \ref{defposfunc}, \ref{defposfunc2}, \ref{deftreesetpair}, \ref{defsufficient} and Lemmas \ref{lemsuff}, \ref{lemsuff2}).
and then define the class of conditions and associated  notion of forcing  needed in subsection \ref{sectforcing}
The notion  of extension of a condition is introduced in subsection \ref{sectextension}
 and applied to show that all the requirements are satisfied (Lemmas \ref{lemcore}).

Fix $D\subseteq\omega$ and $S\subseteq 2^{<\omega}$ such that there is no $D$-computable bounded enumeration of $S$.
Let $Q$ be a nonempty $\Pi_1^{0,D}$-class of $\tt_2^1$ instances.
We prove that there exist an instance $C\in Q$ and a $\tt_2^1$-solution $G$ of $C$ such that $D\oplus G$ does not compute a bounded enumeration of $S$.
For simplicity, assume $D = \emptyset$ and any reference to it  is henceforth suppressed.
In the proof of Theorem \ref{th102} we will assume that for every Turing functional $\Psi_e$, there exists an $l$
such that for every $X$, $\Psi_e^X$ computes an $l$-enumeration of $2^{<\omega}$, and for all $X$ and $n$,
$\Psi_e^X(n)\downarrow$ implies $\Psi_e^X(n)\subseteq 2^n$.

\subsubsection{Preconditions} \label{sectforcing}
In order to apply the idea in Liu \cite{Liu2015Cone}, we consider ``$d$-dimensional coloring vectors'' $\v{C}=(C_0,\dots,C_{d-1})$
where $d$ is a positive integer and each $C_n$ ($n<d$) is an instance of $\tt^1_2$.

Given a $d$-dimensional coloring vector $\v{C}$, a subset $I\subseteq d=\{0,\dots, d-1\}$, a color $i\in 2$,
we say that a tree $F$ is \emph{colored $i$ on $I$} by $\v{C}$, if for every $n\in I$, $F$ is colored $i$ by the coloring $C_n$.
Given a $\Pi_1^0$-class $\v Q$ of $d$-dimensional coloring vectors, we say that $F$ is \emph{colored $i$ on $I$ by $ Q$},
if for every $\v{C}\in \mbq$, $F$ is colored $i$ on $I$ by $\v{C}$.

%\begin{definition} \label{deftreesetpair}
%A \emph{\treesetpair} is a pair $(F,B)$ where $F$ is a finite \treeset\ and $B$ is a cover of $\leaf(F)$.
%It is perfect if $F$ is a perfect tree.
%%A set $\mcal{F}$ of \treesetpair s is \emph{\under } $E$ if for every $(F,\v{\tau})\in\mcal{F}$, $\v{\tau}\subseteq E$.
%A set $\mcal{F}$ of \treesetpair s is \emph{\under } $E$ if for every $(F,B)\in\mcal{F}$, $E$ covers $B$.
%A collection of sets of \treesetpair s is \under\ $E$ if every member of that collection is \under\ $E$.
%\end{definition}

A \emph{precondition} $p$ is a tuple $(F,B,i,I,\mbq)$ where $\mbq$ is a nonempty $\Pi_1^0$-class of $d$-dimensional coloring vectors,
$i\in 2$, $I\subseteq d$, $F$ is a finite perfect tree which is colored $i$ on $I$ by $\mbq$, and $B$ is a 1-1 cover of $\leaf(F)$,
meaning $B$ is a cover of $\leaf(F)$ and there is a bijection between $B$ and $\leaf(F)$.
Intuitively,  one may consider a precondition to be  a collection of   potential Mathias-type conditions $(F,X)$,where $X$ is a forest in $[B]^{\preceq}=\bigcup \{[\rho]^{\preceq}: \rho\in B\}$
  and contains  a ``subforest'' colored $i$ in some $\vec C\in \vec Q$,  for sone $i\in I$.

We say that one precondition $p'= (F',B',i',I',\mbq')$ \emph{extends} another $p=(F,B,i,I,\mbq) $ (written as $p'\leq p$)
if $B'$ covers $B$, $F'\succeq F, F'\setminus F\subseteq [B]^\preceq$, $i'=i,I'=I$ and $\mbq'\subseteq \mbq$.
%%What about type II extension?

The requirements on avoidance of bounded enumeration are as follows:
\begin{itemize}
\item $\mcal{R}_e$: Either for some $n$, $\Psi_e^G(n)\downarrow$ and $\Psi_e^G(n) \cap S=\emptyset$ or $\Psi_e^G$ is not total.
(Note that the previous convention concerning the type of outputs  $\Psi_e$may compute  continues to  apply.)
\end{itemize}

We say that a precondition $(F,B,i,I,\mbq)$ \emph{satisfies} $\mcal{R}_e$ if there is an $n$ such that $\Psi^F_e(n)\downarrow$ and $\Psi^F_e(n)\cap S=\emptyset$,
or for all $(F',B',i',I',\v Q')\leq (F,B,i,I,\v Q)$, $\Psi^{F'}_e(n)\uparrow$ (this is property is closer to Cohen forcing than Mathias forcing).

There is  another set of requirements to ensure that the generic tree  $G$ is  infinite:
\begin{itemize}
\item $\mcal{P}_e$: There is an $F\preceq G$ such that $F\cong 2^e$.
\end{itemize}

A precondition $(F,B,i,I,\v Q)$ \emph{satisfies} $\mcal{P}_e$ if $F\cong 2^{<n}$ for some $n\geq e$.

Suppose   $p=(F,B,i,I,\v Q)$ is a precondition  such that  every component of $\vec Q$ is from the given $\Pi^0_1$-class and for any  $p'\leq p$,
for any index $e$, there is a $p''\leq p'$ which satisfies $\mcal{R}_e$ and $\mcal{P}_e$. Then an easy argument similar to the proof of Lemma \ref{lem18} will complete  the proof of Theorem \ref{th102}.
Hence  from now on we  assume
\vskip.2in
($\dagger$) For any  $p$ there is a  $p'\leq p$ and a least index $e=e(p)$
such that any extension of $p'$ fails to satisfy either $\mcal{R}_e$ or $\mcal{P}_e$.

\vskip.2in

We will refer to such a $p'$ as  being {\it bad} for $\mathcal{R}_e$ or $\mcal{P}_e$ accordingly.
Clearly if $p$ satisfies or is bad for a requirement, then all extensions of $p$ also satisfy or are bad for the same requirement.
This simple fact will be used several times when performing Type-I extensions.  Call this Fact (*).

We now introduce the notion of a condition, which is essentially a family of preconditions labelled by the colors on the associated  perfect tree $F$.
Prior to this, there is the notion of   a family of ``density predicting'' functions $\mcal{G}_{\v Q}$.
Let $T\subseteq 2^{<\omega}$ be a tree.  Recall that a set $X\subseteq T$ is {\em dense in} $T$
if for any $\sigma\in T$ there is a $\tau\in X$ extending $\sigma$.
For a node $\rho\in T$, we say that $X$ is {\em dense over $\rho$} if $X\cap [\rho]^{\preceq}$ is dense in $T\cap [\rho]^{\preceq}$.
$X$ is \emph{somewhere dense over} $\rho$ and $X$ is \emph{nowhere dense over} $\rho$ are defined similarly.

Let $\mbq$ be a collection of $d$-dimensional coloring vectors.
Define the family $\mathcal{G}_{\mbq}$ of functions $g: 2^{<\omega}\rightarrow 2^d$ as follows:
\begin{align*}
\mathcal{G}_{\mbq} = \big\{ g: \ \ &\text{ There exists a } (C_0,\dots,C_{d-1})\in\mbq \text{ such that for every }\sigma, \\ \nonumber
&\bigcap\limits_{n<d} C_n^{-1}\big(g(\sigma)(n)\big) \text{ is \somewheredense\ over }\sigma. \big\}
\end{align*}
Informally, every $g\in \mcal{G}_{\v Q}$ has a witness vector $\v{C}$ such that for each $\sigma$ on the full binary  tree,
$g(\sigma)$ selects a ``color vector'' $\zeta$ such that the nodes colored  $\zeta$ is dense in a cone above $\sigma$,
thus producing a perfect tree which is simultaneously homogenous for $C_0,\dots,C_{d-1}$. Note that those $\zeta$'s  need not be consistent.

\begin{lemma} \label{lem33}
Let $\mbq$ be a collection of $d$-dimensional coloring vectors. Then
\begin{enumerate} [{\rm (i)}]
\item $\mathcal{G}_{\mbq}$ has the following closure property: Given $g_1\in \mcal{G}_{\mbq}$,
 antichains $E_1, E_2\subseteq 2^{<\omega}$ such that $E_1$ covers $E_2$,
and given a function $h$ defined on $E_2$ such that every $\sigma\in E_2$ has an extension $\tau\in E_1$ with $h(\sigma)=g_1(\tau)$,
there exists a $g_2\in\mathcal{G}_{\v Q}$ which extends $h$.

\item If $\mbq'\subseteq \mbq$, then $\mathcal{G}_{\mbq'}\subseteq \mathcal{G}_{\mbq}$;

\item $\mbq\ne\emptyset$ implies $\mathcal{G}_{\mbq}\ne\emptyset$;

\item If $\mbq= \mbq_0\times\cdots\times\mbq_{n-1}=\{(\v{C}_0, \dots,\v{C}_{n-1}): \v{C}_m\in\mbq_m \}$,
then $\mathcal{G}_{\mbq}\subseteq \mathcal{G}_{\mbq_0}\times\cdots \times\mathcal{G}_{\mbq_{n-1}}$.
Here we identify $X^{m_0}\times \dots \times X^{m_{n-1}}$ with $X^{m_0+\dots+m_{n-1}}$ where $X$ is the set of $2$-colorings.
\end{enumerate}
\end{lemma}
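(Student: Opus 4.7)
The proof breaks into four independent pieces, of which (ii) and (iv) are short bookkeeping, while (iii) is the core pigeonhole fact from which (i) follows by a simple extension-by-density argument.

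For (ii), any witness $\vec C\in\mbq'$ for $g\in\mcal{G}_{\mbq'}$ automatically lies in $\mbq$ and hence witnesses $g\in\mcal{G}_{\mbq}$. For (iv), suppose $\vec C=(\vec C_0,\ldots,\vec C_{n-1})\in \mbq_0\times\cdots\times\mbq_{n-1}$ witnesses $g\in\mcal{G}_{\mbq}$, and decompose $g$ block-wise as $(g_0,\ldots,g_{n-1})$. For each $\sigma$, the full intersection $\bigcap_n C_n^{-1}(g(\sigma)(n))$ is dense above some $\sigma'\succeq\sigma$; each sub-intersection using only the indices of a single block $m$ is a superset, hence also dense above $\sigma'$, and therefore somewhere dense above $\sigma$. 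Thus $\vec C_m$ witnesses $g_m\in\mcal{G}_{\mbq_m}$.

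For (iii), fix any $\vec C=(C_0,\ldots,C_{d-1})\in \mbq$ and set $A_\zeta:=\bigcap_{n<d}C_n^{-1}(\zeta(n))$ for $\zeta\in 2^d$. The plan is to show that above every $\sigma$ some $A_\zeta$ is somewhere dense, so that selecting such a $\zeta$ for each $\sigma$ yields a function $g\in\mcal{G}_{\mbq}$ with witness $\vec C$. Suppose toward contradiction that for a given $\sigma$ no $A_\zeta$ is somewhere dense above $\sigma$; then for each $\zeta$ and each $\sigma'\succeq\sigma$ some $\sigma''\succeq\sigma'$ satisfies $A_\zeta\cap[\sigma'']^\preceq=\emptyset$. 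Enumerating $\zeta_0,\ldots,\zeta_{2^d-1}$ and iterating this along a chain $\sigma=\sigma^{(0)}\preceq\sigma^{(1)}\preceq\cdots\preceq\sigma^{(2^d)}$, we reach a node above which no $A_\zeta$ has any element --- contradicting the trivial observation that $\sigma^{(2^d)}$ itself lies in $A_\zeta$ for its own color vector.

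For (i), reuse the witness $\vec C$ that witnesses $g_1\in\mcal{G}_{\mbq}$. On $\sigma\in E_2$, let $\tau(\sigma)\in E_1$ be the hypothesised extension with $g_1(\tau(\sigma))=h(\sigma)$, and set $g_2(\sigma):=h(\sigma)$; by definition of $g_1\in\mcal{G}_{\mbq}$, $A_{g_1(\tau(\sigma))}$ is somewhere dense above $\tau(\sigma)$, and since the witnessing node $\sigma'\succeq\tau(\sigma)$ also extends $\sigma$, it is somewhere dense above $\sigma$. On every other $\sigma$, invoke (iii) relative to the same $\vec C$ to pick $g_2(\sigma)$ so that $A_{g_2(\sigma)}$ is somewhere dense above $\sigma$. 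The resulting $g_2$ extends $h$ and lies in $\mcal{G}_{\mbq}$. The only real subtlety --- and the step one needs to get right --- is the monotonicity of ``somewhere dense over'' under extension, i.e.\ that somewhere-denseness above $\tau$ transfers to somewhere-denseness above any $\sigma\preceq\tau$; once this is observed, (i) reduces to applying (iii) to fill in the remaining values.
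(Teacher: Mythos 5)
Your proof is correct and takes essentially the same approach as the paper: fix the witness vector $\vec{C}$, argue (iii) by pigeonhole on the $2^d$ color vectors, and verify (i) by observing that ``somewhere dense over $\tau$'' implies ``somewhere dense over any $\sigma\preceq\tau$''. The only difference worth mentioning is that for $\sigma\notin E_2$ in (i) the paper simply retains $g_2(\sigma)=g_1(\sigma)$, so no fresh invocation of (iii) is needed; your reselection of those values via (iii) is equally valid, just slightly less economical.
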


\begin{proof}
We only show (i) and (iii), as the other two are immediate.

For (i), let
\[
g_2(\sigma)=\left\{
              \begin{array}{ll}
                g_1(\sigma), & \hbox{if $\sigma\not\in E_2$;} \\
                h(\sigma), & \hbox{$\sigma\in E_2$.}
              \end{array}
            \right.
\]
Clearly $g_2$ extends $h$.  To see that $g_2\in \mathcal{G}_{\mbq}$, we use the same witness vector $(C_0,\dots,C_{d-1})$ for $g_1$.
It remains to verify the defining property for $\sigma\in E_2$.  Let $g_2(\sigma)$ be $\zeta\in 2^d$.  Then by definition,
$h(\sigma)=\zeta$.  So for some $\tau\in E_1$ extending $\sigma$, $g_1(\tau)=\zeta$.  Since $g_1\in \mathcal{G}_{\mbq}$,
$\zeta$ is dense on some cone above $\tau$.  Since $\sigma\preceq \tau$, that cone is also above $\sigma$, and we are done.

For (iii), let $(C_0,\dots,C_{d-1})$ be a coloring vector in $\mbq$.  Notice that for any $\sigma\in 2^{<\omega}$, the coloring vector
restricted to $[\sigma]^{\preceq}$ is a $2^d$-coloring.  Now in an  $\omega$-model, $I\Sigma^0_2$ holds and hence
there is a  color vector $\zeta\in 2^d$ such that $\zeta$ is dense somewhere above $\sigma$.  Let $g(\sigma)=\zeta$ and  we are done.
\end{proof}

\begin{definition} \label{defcondition}
A \emph{condition} is a tuple $q=(\{\mcal{F}_{i,I}\}_{i\in 2, I\in \mcal{I}}, \mcal{I}, d, E, \mbq)$ where
\begin{enumerate}
\item [{\rm (i)}] $d>0$ and $\mcal{I}\subseteq \mcal{P}(d)$;
 \item [{\rm (ii)}] $\mbq$ is a nonempty $\Pi_1^0$-class of $d$-dimension coloring vectors;
 \item [{\rm (iii)}]
Each $\mcal{F}_{i,I}$ is a finite set of pairs $(F,B)$ such that $F$ is colored $i$ on $I$ by $\mbq$ and $B$ is a cover of $F$;
\item [{\rm (iv)}] $E$ is a finite antichain on $2^{<\omega}$ such that for all $i\in 2, I\in \mcal{I}$ and $(F,B)\in \mcal{F}_{i,I}$, $B\subseteq E$, and
\item [{\rm (v)}]  (Sufficiency condition) For every $g\in \mcal{G}_{\v Q}$,  there exist an  $i\in 2$, $I\in \mcal{I}$ and an
$(F,B)\in\mcal{F}_{i,I}$, such that for any $\sigma\in B$ and $n\in I$, $g(\sigma)(n)=i$.
\end{enumerate}
\end{definition}

We will elucidate the role played by $\mcal{I}$  in the discussion of  Type II extensions below.
For Type I extensions, one can safely think of $I\in \mcal{I}$ as $I\subseteq d$.
In a condition $q$, for every $i\in 2, I\in \mcal{I}$ and $(F,B)\in\mcal{F}_{i,I}$, $p=(F,B,i,I,\mbq)$ is a precondition.
We  refer to this as ``$p$ {\em occurs in} $q$''.

The intuition  for sufficiency  is that  for any ``density prediction'' $g\in \mathcal G_{\vec Q}$,
there exist  $i,I,F,B$ such that $F$ is colored $i$ on $I$ and $g$ says $F$ can be extended to an infinite tree of color $i$ on $I$ in the open neighborhood $[B]^{\preceq}$.
This is summarized in  the following lemma.
%Note that when $I$ ranges over all possible subsets of $d$, some of the families $\mcal{F}_{i,I}$ might be empty.
%Strictly speaking, the we should have another parameter $\mcal{I}\subseteq \mcal{P}(d)$ which contains indices of only those nonempty families.
%But let us keep things as they are for the sake of notation simplicity.
%The following lemma provides the motivation for introducing the notion of sufficiency.

\begin{lemma} \label{lempos}
For any  $q=(\{\mcal{F}_{i,I}\}_{i\in 2, I\in \mcal{I}}, \mcal{I}, d, E,\mbq)$, there exist $i\in 2, I\in \mcal{I}$,  $(F,B)\in \mcal{F}_{i,I}$,
 $\v{C}\in\mbq$ and an  infinite perfect $G\succ F$ such that  $G\setminus F\subseteq [B]^\preceq$
and $G$ is colored $i$ on $I$ by $\v{C}$.
\end{lemma}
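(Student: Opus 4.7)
The plan is to combine the nonemptiness of $\mcal{G}_{\mbq}$ (Lemma~\ref{lem33}(iii)) with the sufficiency clause~(v) built into the definition of a condition. First I would fix any $g\in\mcal{G}_{\mbq}$ and let $\v{C}=(C_0,\ldots,C_{d-1})\in\mbq$ be a witness for $g$, so that for every $\sigma$ the set $\bigcap_{n<d}C_n^{-1}(g(\sigma)(n))$ is somewhere dense over $\sigma$. Applying clause~(v) to this particular $g$ produces an $i\in 2$, an $I\in\mcal{I}$ and an $(F,B)\in\mcal{F}_{i,I}$ such that $g(\sigma)(n)=i$ for every $\sigma\in B$ and every $n\in I$. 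These $i,I,F,B,\v{C}$ are what will witness the conclusion: $F$ is already colored $i$ on $I$ by $\v{C}$ by clause~(iii) of Definition~\ref{defcondition}, and what remains is to grow an infinite perfect $G\succ F$ through $B$ that is colored $i$ on $I$ by $\v{C}$.

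The key point in constructing $G$ is the following. For each $\sigma\in B$, since $g(\sigma)(n)=i$ for every $n\in I$, the inclusion
\[
\bigcap_{n<d}C_n^{-1}(g(\sigma)(n))\;\subseteq\;\bigcap_{n\in I}C_n^{-1}(i)
\]
together with the somewhere-dense property of $g$ at $\sigma$ furnishes a node $\rho_\sigma\succeq\sigma$ such that the set $A_\sigma:=\bigcap_{n\in I}C_n^{-1}(i)$ is dense above $\rho_\sigma$ in $2^{<\omega}$. A standard greedy extraction --- at each step choosing two incompatible extensions that both lie in $A_\sigma$ --- then produces an infinite perfect subtree $T_\sigma\subseteq A_\sigma$ with root $\rho_\sigma$.

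Finally, the 1-1 cover condition identifies each $\sigma\in B$ with a unique leaf of $F$, so I would glue the $T_\sigma$'s onto $F$ along this bijection. The only mild subtlety is that naive gluing would give each leaf of $F$ just one immediate $G$-successor (namely $\rho_\sigma$); to preserve perfectness I would instead set $G=F\cup\bigcup_{\sigma\in B}\bigl(T_\sigma\setminus\{\rho_\sigma\}\bigr)$, so that the two $T_\sigma$-children of $\rho_\sigma$ become the two immediate $G$-successors of the corresponding leaf of $F$. Then $G$ is an infinite perfect tree with $G\setminus F\subseteq[B]^\preceq$, and by construction every node added to $F$ lies in $C_n^{-1}(i)$ for every $n\in I$, so $G$ is colored $i$ on $I$ by $\v{C}$, as required. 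No genuine obstacle arises; the whole argument is an unpacking of the definitions, with Lemma~\ref{lem33}(iii) and clause~(v) doing all the real work.
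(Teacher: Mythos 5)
Your proof is correct and takes essentially the same route as the paper's: choose $g\in\mcal{G}_{\mbq}$ (nonempty by Lemma \ref{lem33}(iii)) with witness $\v{C}$, apply the sufficiency clause (v) to get $i,I,(F,B)$, and observe that for each $\sigma\in B$ the set $\bigcap_{n\in I}C_n^{-1}(i)$ is somewhere dense over $\sigma$. The only difference is that the paper stops at ``hence the conclusion,'' while you spell out the routine greedy extraction and gluing that produce the infinite perfect $G$; that elaboration is correct (modulo the trivial point that one should take as root of each $T_\sigma$ a node of $A_\sigma$ extending $\rho_\sigma$ rather than $\rho_\sigma$ itself, which in any case you discard when forming $G$).
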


\begin{proof}
Let $g\in\mathcal{G}_{\mbq}$ which exists by Lemma \ref{lem33} (iii), and let the coloring vector $\v{C}=(C_0,\dots,C_{d-1})$ witness  $g$ being in $\mathcal{G}_{\mbq}$,
i.e.~for every $\sigma$,
\begin{align}\label{tt22vswkleq0}
\bigcap\limits_{n<d}C_n^{-1}\big(g(\sigma)(n)\big) \text{ is somewhere dense over }\sigma.
\end{align}
By the sufficiency of $q$, there exist $i\in 2, I\subseteq d$ and $(F,B)\in\mcal{F}_{i,I}$ such that
for every $\sigma\in B$ and $n\in I$, we have $g(\sigma)(n) = i$.
By (\ref{tt22vswkleq0}), for every $\sigma\in B$, $\bigcap_{n\in I}C_n^{-1}(i)$ is somewhere dense over $\sigma$, hence the conclusion.
\end{proof}

We now define the notion of extension of a condition.  To  motivate,
fix a $\Pi_1^0$-class $\v Q$ of instances of $\tt_2^1$.
Let  $q^0=(\{\mathcal{F}_{0, \{0\}},\mcal{F}_{1,\{0\}}\}, \{0\}, 1, \{\varepsilon\},\mbq)$,
where $\mcal{F}_{0,\{0\}} =\{(\emptyset,\emptyset)\},\mcal{F}_{1,\{0\}}=\{ (\emptyset,\emptyset)\}$.
%To see the sufficiency, we have $A_{i,\{0\}} = \{i\}$ for $i\in 2$.
Observe  that $q^0$ is sufficient: For each  $g\in \mcal{G}_Q$, since $g(\varepsilon)$ is a string of length $1$, we may assume that $g(\varepsilon)=\langle0\rangle$.
Then take $i=0$ and $I=\{0\}$, $(\emptyset,\varepsilon)\in\mcal{F}_{i,I}$, and $E'=E=\{\varepsilon\}$, we have for any $n\in I$ (which means $n=0$),
$g(\varepsilon)(n)=0=i$. Intuitively, this is almost trivial: Some coloring $C$ will witness $g$'s prediction.
If $g$ says color $0$ is somewhere dense in a cone above $\varepsilon$, then the precondition in $\mcal{F}_{0,I}$ wins, and  otherwise $\mcal{F}_{1,I}$ wins.

By ($\dagger$), the two preconditions occurring in $q^0$ can be extended to preconditions that are bad for either some $\mcal{R}$ or some $\mcal{P}$ (with least index).
Sufficiency of $q^0$  tells us that one of them is bad for $\mcal{R}_e$ for some $e$.
We first argue that these extensions of preconditions will not destroy  sufficiency.
This will be our {\em Type I extension}.   Following that  is an  extension by another    condition $q'$ in which a precondition $p$ is bad for some $\mcal{R}$.
The next step is to increase the dimension of the coloring vectors   to obtain  a condition $q''$ in which all these bad $\mcal{R}$'s are now  satisfied.
This will be our {\em Type II extension}.
 Besides satisfying the requirement, preserving  sufficiency will  also be a  key objective in the construction.

\subsubsection{Type I Extensions} \label{sectextension}
Let $q=(\{\mcal{F}_{i,I}\}_{i\in 2,I\in \mcal{I}},\mcal{I}, d, E, \mbq)$ be a condition.
A Type I extension of $q$ consists of a series of actions on  preconditions $p$ occurring in $q$ so that $p$ is either removed from $q$   in the Type I extension  or is  extended to a $p'$ which is bad for some $\mcal{R}_e$.
In fact any requirement  higher than  $\mcal{R}_e$ in the priority list is satisfied by $p'$.

The action on each precondition is organized in the order of   listing of   pairs
$(i, I)$.
Fix a pair $(i,I)$ with $i\in 2$ and $I\in \mcal{I}$. Let $(F_0,B_0), \dots, (F_r,B_r)\ne\emptyset$ be all the members in $\mcal{F}_{i,I}$.
By Fact (*), we may ignore those which are   bad for some $\mcal{R}_e$.
We claim that  one may  remove the preconditions  which are bad for some $\mcal{P}_n$ while preserving sufficiency.

Suppose that $(F, B)\in \mcal{F}_{i,I}$ is bad for some $\mcal{P}_n$. In other words,
there is no perfect \treeset\ $F'\cong 2^{<n}$, where $n>\text{ the height of}$ $F$,  such that $F'\succeq F, F'\setminus F\subseteq [B]^\preceq$,
and for some coloring vector $\v{C}\in\mbq$, $F'$ is colored $i$ on $I$ by $\v{C}$.
In this case, we remove  $(F,B)$ from $\mcal{F}_{i, I}$. Call the resulting family $\mcal{F}'_{i,I}$.
(Note that $\mcal{F}'_{i,I}$ could be empty  and thus will not be considered in henceforth in the constructi0n).
Let $q'$ be the condition obtained from $q$ by replacing $\mcal{F}_{i,I}$ with $\mcal{F}'_{i,I}$. We show that $q'$ is a condition by showing its sufficiency.

Let $g\in \mcal{G}_{\v Q}$ with witness $\v{C}\in  \v Q$. Since $q$ is sufficient, there exist some $i^*\in 2$ and $I^*\subseteq d$,
$(F^*,B^*)\in\mcal{F}_{i^*,I^*}$ such that $B^*\subseteq E$ and for any $\sigma\in B^*$ and $n\in I^*$, $g(\sigma)(n)=i^*$.
By the definition of $\mcal{G}_{\v Q}$, for every $\sigma\in B^*$, the subtree $\subseteq [\sigma]^{\preceq}$ colored  with $i^*$ on $I^*$ is somewhere dense above $\sigma$.
Clearly either the pair $(i^*, I^*)$ differs from $(i,I)$ or $(F^*, B^*)\neq (F,B)$ by the assumption on $(F,B)$.  Thus $q'$ is sufficient. For simplicity of notations, suppose $q'=q$.

Let $\mathcal R_e, \mathcal P_n$, where $e, n\in\omega$, be listed in order of priority.  Starting with $(F_0, B_0)$, one   extends it in sequence  to satisfy the next unsatisfied requirement $\mcal{R}_e$ or $\mcal{P}_n$, until it is no longer possible to do so. This would be an $\mathcal R$-requirement, say $\mathcal R_{e_0}$, that is not satisfiable.
Let the resulting element be denoted $(F_0', B_0')$. Thus  $F_0'$ is a finite perfect \treeset\ such that $F_0'\succeq F_0, F'_0\setminus F_0\subseteq [B_0]^\preceq$,
and for some coloring vector $\v{C}\in\mbq$, $F'_0$ is colored $i$ on $I$ by $\v{C}$. Furthermore,  $F_0'$ satisfies every $\mcal{R}_e$ and $\mcal{P}_n$ of higher priority than  $\mcal{R}_{e_0}$,
and no extension via a  precondition  satisfies  $\mcal{R}_{e_0}$, and  $B_0'=\leaf(F_0')$.
Let $\mbq_0=\{\v{C}\in \mbq: F'_0$ is colored $i$ on $I$ by $\v{C}\}$. Now do the same for $(F_1,B_1)$ but replace $\mbq$ with  $\mbq_0$.
Performing this series of operations  over $j'\leq r$, one obtains   $(F_j',B_j')$  and $\mbq'=\mbq_{r}$ which is a nonempty subclass of $\mbq$.

Let $E_1$ be an antichain which covers $\bigcup_{j=0}^{r} \leaf(F_j')$, $E_2=\{\tau\in E: \tau$ is not below any node in $\bigcup_{j=0}^{r} \leaf(F'_j)\}$
and $E'=E_1\cup E_2$, $\mcal{F}'_{i,I}=\{(F'_j, B''_j): j<r\}$ where $B''_j=\{\tau\in E':$ for some $\sigma\in \bigcup_{0\leq j<r} B_j'$,
$\tau$ is the leftmost extension of $\sigma$ in $E'\}$.

For pairs $(i',I')\ne (i,I)$, replace $(F,B)\in \mcal{F}_{i', I'}$ with $\{(F,\h{B}): \h{B}\subseteq E'$ and $\h{B}$ is a 1-1 cover of $\leaf(F)\}$,
Let $\mcal{F}'_{i',I'}$ denote the resulting family of pairs.  Note that $\mcal{F}_{i',I'}$ now may contain  more members than before,
because of the different choices for $\hat B$, for  each given  $F$.

We claim that $q'=(\{\mcal{F}'_{i,I}\}_{i\in 2,I\in \mcal{I}}, \mcal{I}, d, E', \mbq')$ is  a condition.
The only nontrivial part is to verify the sufficiency of $q'$..

Let $g\in \mcal{G}_{Q'}$ and $A=\{\zeta\in 2^d: \forall n\in I, \zeta(n)=i\}$.
Consider the following function $h: E\rightarrow 2^d$,
\begin{equation}
h(\sigma) =\left\{\begin{aligned}
&g(\tau) \text{\ for the leftmost such\ }\tau, &\text{\ \ if there is some\ }\tau\in E', \sigma\preceq \tau \text{\ and\ } g(\tau)\not\in  A; \\ \nonumber
&g(\tau) \text{\ for the leftmost $\tau\succeq \sigma$ with $\tau\in E'$,} &\text{\  \ otherwise.}
\end{aligned}\right.
\end{equation}
Note that for every $\sigma\in E$ there exists a $\tau\in E'$ with $\tau\succeq \sigma$ such that $g(\tau) = h(\sigma)$.
By the closure property (Lemma \ref{lem33} (i)) of $\mcal{G}_{\mbq'}$, $h$ has an extension $\h{g}$ in $\mcal{G}_{\mbq'}$.
Note also that $\mcal{G}_{\mbq'}\subseteq \mcal{G}_{\mbq}$  by Lemma \ref{lem33} (ii) since $\mbq'\subseteq \mbq$ .
Hence $\h{g}\in \mcal{G}_{\mbq}$.

Since $q$ is sufficient, there exist  $i^*\in 2$ and $I^*\in \mcal{I}$,
$(F^*,B^*)\in\mcal{F}_{i^*,I^*}$ such that $B^*\subseteq E$ and for any $\sigma\in B^*$ and $n\in I^*$, $\h{g}(\sigma)(n)=i^*$.

If the pair $(i^*, I^*)$ is different from $(i, I)$, then by lifting $\sigma\in B^*$ to some $\tau_{\sigma}\in E'$ with $g(\tau_{\sigma})=h(\sigma)=\h{g}(\sigma)$,
we obtain  $\t{B}=\{\tau_{\sigma}\in E': \sigma\in B^*\}$ which covers $B^*$ and for any $\tau\in \t{B}$ and $n\in I^*$,
we have $g(\tau_{\sigma})(n)=\h{g}(\sigma)(n)=i^*$.  In other words, $i^*, I^*$ $F^*$ and $\t{B}$ witness that $q'$ is sufficient
(this is the reason that we take $(F^*, \t{B})\in \mcal{F}_{i^*, I^*}$ for all 1-1 covers $\t{B}$ of $B$).

If $i^*=i$ and $I^*=I$, then $(F^*, B^*)$ is some $(F,B)\in \mcal{F}_{i,I}$.
Consider $\sigma\in B$.  By the definition of $h$, for any $\tau\succeq \sigma$ with $\tau\in E'$,
we have $g(\tau)\in A$.  Thus for any $\tau\in B'$, $g(\tau)\in A$. Then  $(F', B')$ to witness the sufficiency of $q'$.
This completes the verification of the sufficiency of $q'$.

The above steps  take care of one pair $(i, I)$. By Fact (*), we can proceed  to the next pair $(i', I')$ without worrying further about  the pair $(i,I)$.
Upon going through each pair $(i, I)$, one obtains  a condition $q^*$ such that every precondition $p$ occurring in $q^*$ is bad for some (first in the priority list) requirement $\mcal{R}_e$
with witness $\Psi_e$ and $n$.

\subsubsection{Type II Extensions}
We have noted that each precondition $p=(F,B,i,I,\v Q)$ resembles   a Mathias condition $(F,X)$ where  $X$ is indirectly controlled by $\v Q$,
in the sense that $X\subseteq [B]^{\preceq}$ is a finite union of trees colored $i$ on $I$ for all $\vec C$ in $\v Q$.
When $p$ is bad for some $\mcal{R}_e$ with witness $\Psi_e$ and $n$,
it may be the case that for any $F'\succeq F$, there is an  $F''\succeq F'$ such that $(F''\setminus F)\subseteq X$ and $\Psi_e^{F''}(n)\downarrow\cap\ S\ne \emptyset$.
A way to  overcome this difficulty is to ``thin'' $X$ to a  $Y$  so that  all finite trees $F''$ with $(F''\setminus F)\subseteq Y$ satisfy  $\Psi_e^{F''}(n)\uparrow$.
This thinning  of $X$  can be achieved by imposing additional  restrictions on the choice of  coloring vectors.  For example, suppose $d=3$, $i=0$, $I=\{0,1\}$,
and the Mattias condition $(F,X)$ is such that  for all $(C_0,C_1,C_2)\in \v Q$, both $F$ and $X$ are colored $0$ by  $C_0$ and $C_1$,
while no requiured color is specified under  $C_2$.  To thin $X$,  one may choose  subclasses $\v Q', \v Q''$ of $\v Q$, and
form coloring vectors $(C_0',C_1',C_2'; C_0'',C_1'',C''_2)$ where $(C_0',C_1',C_2')\in \v Q'$ and $(C_0'',C_1'',C''_2)\in \v Q''$;
and we  choose  $Y$ to be colored $0$ under $C_0',C_1', C_0'',C_1'',C''_2$ and any color under  $ C_2'$. In this way $Y$ is thinned from $X$ due to the additional  requirement  that $C''_2(Y)=\{0\}$.
If $\v Q=\v Q'$, then indeed $Y\subseteq X$.
This example gives the motivation for introducing the notion of a {\it   Type II extension}.
Let $q=(\{\mcal{F}_{i,I}\}_{i\in 2, I\in \mcal{I}}, \mcal{I}, d, E, \mbq)$ be a condition. Let $m>0$ and $\mcal{K}\subseteq \mcal{P}(m)$.
We set some parameters as follows:

\begin{enumerate}
\item [(1)] Consider the cartesian product $\v Q^m$ whose members are  identified with $m\cdot d$-dimensional coloring vectors.
Thus, for a subset $K$ of $m=\{0,\dots,m-1\}$, it makes sense to say $F$ being colored $i$ on $I\times K$ by $\v Q^m$.
The colletion $\v Q'$ is a subclass of $\v Q^m$ with $(\v Q')^{[0]}=\v Q$, namely the first column of $\v Q'$ is $\v Q$.
\item [(2)] Form the cartesian product $\mcal{I}\times \mcal{K}$, and  identify its members  $(I,K)$ with $I\times K$.
Also assume that $0\in K$ for all $K\in \mcal{K}$.
\item [(3)] For each $(F,B)\in \mcal{F}_{i,I}$, each $K\in \mcal{K}$, form a new precondition $p'=(F, B, i, I\times K, \v Q')$.
\end{enumerate}
The intuition is that $p'$ is identified with some $p\in \mcal{F}_{i,I}$ with its part above $B$  specified  by the coloring vectors in $Q'$.
We say  in this case that $p'$ is a{\em Type II} extension  of $p$.

\begin{lemma}\label{lemext}
If $p'$ is a Type II extension  of $p$ and $p$ satisfies requirement $\mcal{R}_e$, then so does $p'$.
\end{lemma}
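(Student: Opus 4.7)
The plan is to verify the two clauses of Definition of ``a precondition satisfies $\mcal R_e$'' separately, treating them as the $\Sigma^0_1$-clause (an existential witness at $F$ itself) and the $\Pi^0_1$-clause (a forcing-style statement about all extensions). Write $p = (F, B, i, I, \mbq)$ and $p' = (F, B, i, I \times K, \mbq')$, where $\mbq' \subseteq \mbq^m$ and $(\mbq')^{[0]} = \mbq$.

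For the $\Sigma^0_1$-clause, suppose there exists an $n$ with $\Psi_e^F(n)\downarrow$ and $\Psi_e^F(n) \cap S = \emptyset$. Since $p'$ carries the very same finite perfect tree $F$ as $p$, the same witness $n$ works immediately for $p'$. This case is essentially trivial: the $\Sigma^0_1$-clause depends only on the $F$-component, which Type II extensions leave untouched.

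For the $\Pi^0_1$-clause, I would assume there is an $n$ such that $\Psi_e^{\tilde F}(n)\uparrow$ for every extension $(\tilde F, \tilde B, i, I, \tilde{\mbq}) \le p$, and show the analogous statement for $p'$. Fix an arbitrary extension $p'' = (\tilde F, \tilde B, i, I \times K, \tilde{\mbq}') \le p'$, where $\tilde{\mbq}' \subseteq \mbq'$. Consider the projection $\pi_0(\tilde{\mbq}')$ onto the first copy (the first $d$ coordinates); because $(\mbq')^{[0]} = \mbq$ and $\tilde{\mbq}' \subseteq \mbq'$, the projection lands inside $\mbq$. Moreover, by hypothesis $\tilde F$ is colored $i$ on $I \times K$ by $\tilde{\mbq}'$, and since $0 \in K$ (part of the definition of $\mcal K$), restricting to the first copy yields that $\tilde F$ is colored $i$ on $I$ by $\pi_0(\tilde{\mbq}')$. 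Therefore $(\tilde F, \tilde B, i, I, \pi_0(\tilde{\mbq}'))$ is an honest extension of $p$, and the hypothesis on $p$ gives $\Psi_e^{\tilde F}(n)\uparrow$, as required.

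The only subtlety I expect is bookkeeping, in particular ensuring that the ``first column'' convention $(\mbq')^{[0]} = \mbq$ together with the requirement $0 \in K$ is used precisely to guarantee that the projected precondition still inherits color $i$ on the full index set $I$; this is exactly the design choice motivating those two stipulations in the Type II extension construction. No new combinatorial input is needed beyond the definitions. Once this projection argument is in place, the conclusion follows formally, and the lemma will be used later to justify that the satisfaction of $\mcal R_e$ achieved during a Type I extension is preserved through subsequent Type II extensions.
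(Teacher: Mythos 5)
Your proof is correct and takes essentially the same approach as the paper: both reduce the claim to the observation that, since $(\mbq')^{[0]} = \mbq$ and $0 \in K$, any extension of $p'$ projects (via the first copy of the coloring vector) to an extension of $p$ with the same finite tree $\tilde F$, so the $\Pi^0_1$-clause transfers, while the $\Sigma^0_1$-clause is trivial since $F$ is unchanged. Your version is slightly more explicit in separating the two clauses and in introducing the projection $\pi_0(\tilde{\mbq}')$ rather than a single witness vector, but this is an expository difference, not a mathematical one.
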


\begin{proof}
Suppose that $p'=(F,B,i, I\times K, v Q')$ is a Type II extension of  $p=(F,B, i, I, \v Q)$.
It suffices to show that any extension $\t{p}$ of $p'$  with respect to $\v Q'$ is also an extension of $p$   with respect to $\v Q$.
Let $\t{p}=(\t{F}, \t{B}, i, I\times K, \t{Q})$.  Then there exists
\[
(\v{C}_0,\dots,\v{C}_{m-1})=(C_{0,0},\cdots,C_{0,m-1},C_{1,0},\cdots, C_{1,m-1},\cdots,C_{d-1,0},\cdots, C_{d-1,m-1})\in \v Q'
\]
such that $\t{F}\setminus F'\subseteq \bigcap\limits_{n\in I,m\in K} C_{n,m}^{-1}(i)$.  Since $\v Q$ is the first copy of $\v Q^m$ and $0\in K$, we have in particular
\[
\t{F}\setminus F\subseteq \bigcap\limits_{n\in I} C_n^{-1}(i),
\]
and the conclusion  follows.
\end{proof}

We now return  to the end of the last subsection, where Type I extensions have completed, arriving at  a condition $q=(\{\mcal{F}_{i,I}\}_{i\in 2,I\in\mcal{I}},\mcal{I},d, E, \mbq)$,
such that for each precondition $p$ occurring in $q$, there are natural numbers $e(p), n(p)$ where  $p$ is bad for $R_{e(p)}$  witnessed by $n(p)$.
For simplicity, we implicitly assume that the $\v Q^*$ below contains $\v Q$ as its first column and any $K\in \mcal{K}$ has $0$ as its member.

\begin{lemma}\label{lemcore}
Given $q$ as above, there exist $m$, $\mcal{K}\subseteq \mathcal{P}(m)$ and $\v Q^*\subseteq\v Q^m$ such that
$$
q^*=(\{\mcal{F}_{i,I\times K}^*\}_{i\in 2,I\times K\in\mcal{I\times K}},\mcal{I\times K},d\cdot m,E,\mbq^*)
$$
is a condition and every $p'$ occurring in $q^*$ is a Type II extension    of some $p$ occurring in $q$
and $p'$ satisfies $\mcal{R}_{e(p)}$.
\end{lemma}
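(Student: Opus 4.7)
The plan adapts the dichotomy from Theorem~\ref{th3} (Cases 2 and 3) to the $\Pi^0_1$-class setting, applied simultaneously to all preconditions occurring in $q$. Enumerate these preconditions as $p_j=(F_j,B_j,i_j,I_j,\mbq)$ for $j<M$, set $e_j=e(p_j)$ and $n_j=n(p_j)$, and let $l_j$ bound the output size of $\Psi_{e_j}$. For each $j$ and each $V\subseteq 2^{n_j}$, introduce the $\Pi^0_1$-subclass
\begin{align*}
\mbq^{j}_V = \big\{ \v{C}\in\mbq : {}& \text{for every finite perfect } F'\succeq F_j \text{ with } F'\setminus F_j\subseteq[B_j]^\preceq \\
& \text{colored } i_j \text{ on } I_j \text{ by } \v{C},\ \Psi_{e_j}^{F'}(n_j)\downarrow\Rightarrow\Psi_{e_j}^{F'}(n_j)\cap V\neq\emptyset\big\}.
\end{align*}
Badness of $p_j$ forces $\mbq^{j}_\emptyset=\emptyset$, for any witness $\v{C}\in\mbq^{j}_\emptyset$ would yield an extension of $p_j$ satisfying $\mcal{R}_{e_j}$ via the $\Pi^0_1$-clause with witness $n_j$.

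Next I claim that for every $j$ there exist $V^{j}_0,\ldots,V^{j}_{r_j-1}\subseteq 2^{n_j}$ with $\bigcap_s V^{j}_s=\emptyset$ such that each $\mbq^{j}_{V^{j}_s}$ is nonempty. If this fails for some $j$, then by Lemma~\ref{tt22vswkllem17}(i) the family $\{V\subseteq 2^n:\mbq^{j}_V\neq\emptyset\}$ admits uniformly $D$-computable blocking sets $U_n\subseteq 2^n$ of size at most $l_j$; since the original instance $\v{C}\in\mbq$ itself witnesses $\mbq^{j}_{S\cap 2^n}\neq\emptyset$ (analog of Claim~\ref{claim2}), each $U_n$ meets $S\cap 2^n$, producing a $D$-computable bounded enumeration of $S$ and contradicting the standing hypothesis.

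With the families $\{V^{j}_s\}$ in hand, set $m=1+\sum_{j<M}r_j$, partition $\{1,\ldots,m-1\}$ into disjoint blocks $\t{K}_j$ of size $r_j$, and let $K_j=\t{K}_j\cup\{0\}$. Take $\mbq^*\subseteq\mbq^m$ to be the class of $(\v{C}^{(0)},\ldots,\v{C}^{(m-1)})$ with $\v{C}^{(0)}\in\mbq$ and, whenever column $s\in\t{K}_j$ is indexed by $V^{j}_{s'}$, $\v{C}^{(s)}\in\mbq^{j}_{V^{j}_{s'}}$. This is a nonempty $\Pi^0_1$-subclass of $\mbq^m$. Put $\mcal{K}=\{K_j:j<M\}$ (possibly enriched for sufficiency), place $(F_j,B_j)$ into $\mcal{F}^*_{i_j,I_j\times K_j}$, and extend the remaining preconditions via Type~II as in Section~\ref{sectextension}; by Lemma~\ref{lemext}, higher-priority requirements already satisfied are preserved. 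For each $j$, any extension $\t{F}$ of the new precondition is colored $i_j$ on $I_j\times K_j$; for each $s\in\t{K}_j$ the corresponding column $\v{C}^{(s)}\in\mbq^{j}_{V^{j}_{s'}}$ then forces $\Psi_{e_j}^{\t{F}}(n_j)\in V^{j}_{s'}$ whenever it converges, so $\bigcap_s V^{j}_s=\emptyset$ guarantees divergence and hence satisfies $\mcal{R}_{e_j}$.

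The principal obstacle is verifying the sufficiency condition for $q^*$. Given $g\in\mcal{G}_{\mbq^*}$, project to column~$0$ using Lemma~\ref{lem33}(iv) (with $(\mbq^*)^{[0]}=\mbq$) to obtain some $g_0\in\mcal{G}_\mbq$, apply sufficiency of $q$ to find $i_j,I_j,(F_j,B_j)$ with $g_0(\sigma)(n)=i_j$ for all $\sigma\in B_j$, $n\in I_j$, and then transfer this choice to the remaining columns $s\in\t{K}_j$. The transfer is delicate: each $\mbq^{j}_V\subseteq\mbq$ yields $\mcal{G}_{\mbq^{j}_V}\subseteq\mcal{G}_\mbq$ by Lemma~\ref{lem33}(ii), and applying sufficiency of $q$ column-by-column together with the closure property of Lemma~\ref{lem33}(i) lets one align the column-wise predictions of $g$ into a single $(i_j,I_j,F_j,B_j)$-pattern. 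Carrying this out may force $\mcal{K}$ to be enlarged beyond $\{K_j:j<M\}$ to include all subsets $K\subseteq m$ realizable as color-agreement classes of some $g\in\mcal{G}_{\mbq^*}$, so that every such $g$ admits a matching $K\in\mcal{K}$; confirming that a finite such enlargement suffices is the technically subtle point of the construction.
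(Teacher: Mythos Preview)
Your approach has a genuine gap, and the paper's proof resolves it by a different combinatorial organization that you are missing.

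First, a minor but real error: you write that $\bigcap_s V^j_s=\emptyset$ ``guarantees divergence'' of $\Psi_{e_j}^{\t F}(n_j)$. But $\Psi_{e_j}^{\t F}(n_j)$ is a set of size $\leq l_j$, not a point; what you actually need is that no set of size $\leq l_j$ meets every $V^j_s$, i.e.\ that $\{V^j_s\}_s$ is $l_j$-\disperse\ (Lemma~\ref{tt22vswkllem17}(i)). Empty intersection is far too weak.

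The central problem, however, is sufficiency, and it is not a mere ``technically subtle point'' --- your construction cannot be salvaged by enlarging $\mcal K$. Because you define a \emph{separate} class $\mbq^j_V$ for each precondition $p_j$, each column $s\in\t K_j$ of $\mbq^*$ constrains only extensions of $F_j$; it imposes nothing on extensions of $F_{j'}$ for $j'\ne j$. Now take $g\in\mcal G_{\mbq^*}$. Applying sufficiency of $q$ to the projections $g_k$ gives \emph{some} $(i_{j^*},I_{j^*},F_{j^*},B_{j^*})$, but there is no mechanism forcing $j^*$ to match the block $\t K_j$ that the relevant columns came from, nor forcing the $g_k$ for $k\in\t K_j$ to agree on $E$. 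If you enlarge $\mcal K$ to accommodate whatever agreement class arises, the new $K$ will typically mix columns from several $\t K_j$'s; any precondition you then place in $\mcal F^*_{i,I\times K}$ will fail to satisfy its $\mcal R$-requirement, since the foreign columns carry no constraint on it.

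The paper's fix is to define a \emph{single} class $\mbq^n_V$ that simultaneously restricts extensions of \emph{every} precondition in $q$, and then to ask for the stronger $(2^{ud},\t l)$-\disperse\ property (where $u=|E|$). The point of $2^{ud}$ is exactly that it counts functions $E\to 2^d$: given $g$, one partitions the columns by the value of $g_k\uhr E$, and $(2^{ud},\t l)$-scattering guarantees that some part $W_{h^*}$ is already $\t l$-\disperse. Now $\mcal K$ is taken to be \emph{all} $\t l$-\disperse\ subsets of $m$, and because the columns are not tied to any particular $p_j$, one may place \emph{every} $(F_j,B_j)$ into \emph{every} $\mcal F^*_{i_j,I_j\times K}$. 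Sufficiency then follows directly from sufficiency of $q$ applied to $h^*$, with no need to match a specific $j$ to a specific block.
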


\begin{proof}
The proof runs in parallel with that of Case (II) in the previous subsection.
Given $n$ and $V\subseteq 2^{<n}$, consider the following $\Pi_1^0$-class:
\begin{align}
\mbq^n_V =\big\{(C_0,\dots,C_{d-1})\in \mbq: &\text{ For every } p=(F,B,i,I,\v Q) \text{ occurring in } q, \text{ we have:}\\ \nonumber
&\ \text{ for every finite perfect \treeset\ }\h{F}\succeq F \text{ such that }\h{F}\setminus F\subseteq[B]^\preceq,\\ \nonumber
&\ \text{ if }\h{F}\subseteq \bigcap\limits_{j\in I} C_{j}^{-1}(i), \text{ then } \Psi^{\h{F}}_{e(p)}(n(p))\downarrow\rightarrow\Psi^{\h{F}}_{e(p)}(n(p))\cap V\ne\emptyset
\big\}.
\end{align}

We first make a claim:

\begin{claim}\label{claim1}
 For any $n\in\omega$, $\mbq^n_{S\cap 2^{<n}}\ne \emptyset$.
\end{claim}
\noindent  {\it Proof of Claim.}
Suppose otherwise and let $n$ be a counterexample. Fix a $(\t{C}_0,\dots,\t{C}_{d-1})\in\mbq$.
Since $(\t{C}_0,\dots,\t{C}_{d-1})\notin\mbq_{S\cap 2^{<n}}$, there exist an $i\in 2,I\in\mcal{I},(F,B)\in\mcal{F}_{i,I}$,
and a finite perfect \treeset\ $\h{F}\succeq F$ such that $\h{F}\setminus F\subseteq [B]^\preceq$ and $\h{F}\subseteq \bigcap_{j\in I}\t{C}_{j}^{-1}(i)$,
with the property that $\Psi^{\h{F}}_{e(p)}(n)\downarrow\cap S\cap 2^{<n}=\emptyset$, where $p$ is the precondition $(F,B,i,I,\v Q)$ which occurs in $q$.
This, however , contradicts the fact that $p$ has no extension that satisfies $\mcal{R}_{e(p)}$.
\qed
\vskip.15in

Suppose $|E| = u$ and let $\t{l}=\max\{l:$ for some $p$ occurring in $q$, $\Psi_{e(p)}$ is an $l$-enumeration$\}$. We divide the proof of the lemma into two cases.
\vskip.15in

\noindent\textbf{Case 1}.
For some $n$, $\{V\subseteq 2^{{<n}}:\mbq^n_V\ne\emptyset\}$ is $(2^{ud},\t{l})$-\disperse.
\vskip.15in

Suppose $\{V\subseteq 2^{<n}:\mbq_V\ne\emptyset\}= \{V_0,\cdots,V_{m-1}\}$.
Define $\mcal{K}=\big\{K\subseteq m: \{V_{k}\}_{k\in K}\text{ is }\t{l}\text{-\disperse}\big\}$;
$\mbq^* = \mbq_{V_0}\times \cdots\times \mbq_{V_{m-1}}$;
and $\mcal{F}_{i,I\times K}^* =\mcal{F}_{i,I}$ for each $I\in\mcal{I}$ and $K\in\mcal{K}$.

\vskip.2in
We  verify that $q^*=(\{\mcal{F}_{i,I\times K}^*\}_{i\in 2,I\times K\in\mcal{I}\times\mcal{K}},\mcal{I}\times\mcal{K},d\cdot m,E,\mbq^*)$ is \sufficient,
i.e.~for any $g\in\mathcal{G}_{\mbq^*}$, there exist  $\t{i}\in 2$,  $\t{I}\times \t{K}\in\mcal{I}\times \mcal{K}$ and $(\t{F},\t{B})\in\mcal{F}^*_{\t{i},\t{I}\times \t{K}}$
such that for any $\sigma\in \t{B}$ and $(j,k)\in \t{I}\times \t{K}$, $g(\sigma)(j,k) = \t{i}$.

Let $g$ be a function in $\mathcal{G}_{\mbq^*}$.
Since $\{0,1\}^{d\cdot m} = (\{0,1\}^d)^m$, for each $\sigma\in 2^{<\omega}$,
the string $g(\sigma)\in 2^{dm}$ may be viewed as $(\zeta_0,\dots,\zeta_{m-1})$ where each $\zeta_k\in 2^d$.
For each $k<m$, let $g_k(\sigma)=\zeta_k$ be the function $2^{<\omega}\to 2^d$ induced by $g$.

Note that $\mbq^* = \mbq_{V_0}\times \cdots\times \mbq_{V_{m-1}}$,
and so $\mathcal{G}_{\mbq^*}\subseteq \mathcal{G}_{\mbq_{V_0}}\times\cdots \mathcal{G}_{\mbq_{V_{m-1}}}$ by Lemma \ref{lem33} (iii).
Therefore Lemma \ref{lem33} (ii) implies that for every $k<m$, $g_{k}\in\mathcal{G}_{\mbq_{V_{k}}}\subseteq \mathcal{G}_{\mbq}$.

Now regard each $g_{k},k< m$, as a function $E\rightarrow 2^d$.
Since $|E|=u$, there are $2^{ud}$-many functions in $E \rightarrow 2^d$.
Consider the following $2^{ud}$-\partition\ of $m$ indexed by functions $h$ in $E \rightarrow 2^d$:
$$
W_{h} =\big\{k< m:\ \ \text{for any\ } \sigma\in E,\ g_{k}(\sigma)= h(\sigma)\big\}.
$$
Since $\{V_{k}\}_{k< m}$ is $(2^{ud},\t{l})$-\disperse, there exists a $W_{h^*}$ such that $\{V_{k}\}_{k\in W_{h^*}}\ne\emptyset$ is $\t{l}$-\disperse.
By the definition of $\mcal{K}$, $W_{h^*}\in \mcal{K}$.
By the definition of $W_{h^*}$, we see that $h^*=g_k\uhr E$ where $g_k\in \mathcal{G}_{\mbq}$.
By the sufficiency of $q$, there exist $\t{i}\in 2,\t{I}\in\mcal{I}$ and $(\t{F},\t{B})\in\mcal{F}_{\t{i},\t{I}}$
such that for any $j\in \t{I}$ and $\sigma\in \t{B}\subseteq E$, $h^*(\sigma)(j) = \t{i}$.
Thus for all  $j\in \t{I}$, $k\in W_{h^*}$ and $\sigma in \t{B}$, we have
$g(\sigma)(j,k) = g_{k}(j)=h^(j)=\t{i}$ (because $g_k\uhr E=h^*$ for all $k\in W_{h^*}$).
Thus $q^*$ is sufficient.  We  call this $q^*$ a {\em Type II extension of $q$}.

\vskip.15in

Next we show that every precondition $p'$ occurring in $q^*$ satisfies $\Psi_{e(p)}$, where $p'$ is a Type II extension  of a precondition $p$.
More precisely, for all $i\in 2,I\times K\in\mcal{I}\times \mcal{K}$ and $(F,B)\in\mcal{F}_{i,I\times K}^*$,
$(F,B,i,I\times K,\mbq^*)$ satisfies $\mcal{R}_{e(p)}$, where $p=(F,B,i,I,Q)$ occurs in $q$.

Fix $i\in 2,I\in\mcal{I}, K\in\mcal{K}, (F,B)\in\mcal{F}_{i,I\times K}^*=\mcal{F}_{i,I}$ and
$\v{C}=(\v{C}_0,\cdots, \v{C}_{m-1})\in\mbq^*$, where $\v{C}_{k} = (C_{k,0},\cdots,C_{k,d-1})\in\mbq_{V_{k}}$ for each $k<m$.
It suffices to show that for every perfect \treeset\ $F'\succeq F$ such that $F'\setminus F\subseteq [B]^\preceq$ and
$B\subseteq \bigcap_{j\in I,k\in K}C^{-1}_{j,k}(i)$, we have $\Psi_{e(p)}^{F'}(n)\uparrow$
(recall that $n$ is the number under the  assumption of Case 1).
By the definition of $\mbq^n_{V_{k}}$ and the fact that $F'\subseteq \bigcap_{j\in I}C_{k,j}^{-1}(i)$ for all $k\in K$,
we have: $\Psi_{e(p)}^{F'}(n)\downarrow\rightarrow\Psi_{e(p)}^{F'}(n)\cap V_{k}\ne\emptyset$ for all $k\in K$.
On the other hand, $\{V_{k}\}_{k\in K}$ is $\t{l}$-\disperse\ and $\Psi_{e(p)}^{F'}(n)\downarrow\rightarrow |\Psi_{e(p)}^{F'}(n)|\leq \t{l}$
by the definition of $\t{l}$. Thus, by Lemma \ref{tt22vswkllem17} (ii), we conclude that $\Psi_{e(p)}^{F'}(n)\uparrow$.
This takes care  of Case 1.

\vskip.15in

\noindent\textbf{Case 2}. The negation of Case 1 holds.

Then for any $n$, $\big\{V\subseteq 2^n: \mbq_{V}\ne\emptyset\big\}$ is not $(2^{ud},\t{l})$-\disperse.
In this case, as argued in Case 2 of  Case (II) in the previous subsection, one can compute a $2^{ud}\cdot\t{l}$-enumeration of $S$,
yielding a contradiction. The proof is omitted.
\end{proof}

Finally, with all the ingredients at hand, we  now  complete the proof of Theorem \ref{th102}.

\begin{proof} [Proof of Theorem \ref{th102}]
Fix a nonempty $\Pi_1^0$-class $\v Q$ of instances of $\tt_2^1$.
We begin  with the condition $q^0=(\{\mathcal{F}_{0, \{0\}},\mcal{F}_{1,\{0\}}\}, $\{0\}$, 1, \{\varepsilon\},\mbq)$,
where $\mcal{F}_{0,\{0\}} =\{(\emptyset,\emptyset)\},\mcal{F}_{1,\{0\}}=\{ (\emptyset,\emptyset)\}$
(recall that we had earlier  verified that $q_0$ is indeed a condition).
Suppose we are given  a condition $q_s = (\{\mcal{F}_{i,I}\}_{i\in2,I\in\mcal{I}},\mcal{I},d, E, \mbq_s)$, where $s$ is even.
First extend $q_s$ using Type I extension to obtain $q_{s+1}$ such that every $p$ occurring in $q_{s+1}$ is bad for some $\mcal{R}_{e(p)}$
(we are assuming here that we are not in the easy situation  where certain $p$ can be extended all the way to satisfy all requirements).
Then apply Type II extension to obtain  $q_{s+2}$ as in Case 1 of the proof of Lemma \ref{lemcore}.
Iterating the procedure yields an infinite sequence of conditions $q_0\geq q_1\geq q_2\geq \cdots$.

Now consider the collection of preconditions
\[
\mcal{T}=\{p: \mbox{ for some $s$, $p$ occurs in the condition $q_s$}\}.
\]
The relation $p'\leq p$ as preconditions gives rise to a tree structure on $\mcal{T}$.
Since $\mcal{T}$ is an infinite and finitely branching tree, it admits an infinite path
\[
\rho:= \{p_0\geq p_1\geq \dots\},
\]
where $p_s=(F_s,B_s,i,I_s,\v Q_s)$ (note that the color $i$ is fixed in the extensions of preconditions).
Let $G= \bigcup_s F_s$.  Since every requirement $\mcal{P}_e$ is eventually satisfied by some $p_s$,
$G$ is an infinite perfect tree.  Similarly, since every requirement $\mcal{R}_e$ is eventually satisfied by some $p_s$,
$G$ does not compute a bounded enumeration of $S$.

Finally, for each $s$, let $\v Q'_s=\{C\in \v Q: C$ is a component in some coloring vector $\v{C}\in \v Q_s\}$.
Since $\v Q'_s$ can be obtained from finitely many parameters (such as $m$, $K$) and finitely many $\Pi^0_1$-classes (such as $Q_{V_k}$),
each $\v Q'_s$ is a nonempty $\Pi^0_1$ class.  Furthermore $\v Q'_{s+1}\subseteq \v Q'_s$ for all $s$.
By the finite intersection property, there is a coloring $C\in \bigcap_{s} \v Q'_s$.  Clearly $C\in \v Q$ and $G$ is colored $i$ by $C$.
This completes the proof of the theorem.
\end{proof}

\section{Conclusion}

We end this paper with some remarks and a list of questions.
Clearly every Ramsey type combinatorial principle defined on $\mathbb{N}$ (such as the Chain/antichain principle $\msf{CAC}$ and the Thin Set principle $ \msf{TS}$)
has a ``tree analog', and so every question about $\mathbb{N}$
concerning these principles  can be asked
about $2^{<\omega}$.
Here we list some  of them.

  Monin and Petey \cite{monin2019srt22} have recently produced an $\omega$-model of $\mathsf{SRT}^2_2$ in which $\mathsf{COH}$ fails.
  This problem has a natural analog for trees (Dzhafarov and Patey \cite{DzhafarovColoring}):
\begin{question}\label{tt22vswklques0}
Does $\msf{STT}_2^2$ imply $\msf{CTT}_2^2$ in an $\omega$-model?

\end{question}
In fact, one does  not even have an answer to  the following question:
\begin{question}\label{tt22vswklques1}
Does $\msf{SRT}_2^2$ imply $\msf{CTT}_2^2$ in an $\omega$-model?

\end{question}
%\cite{} proved that every computable $\msf{SRT}_2^2$
%instance admit low solution, therefore separating
%$\msf{COH}$ from $\msf{SRT}_2^2$. So one way to solve question
%\ref{tt22vswklques0} is to answer the following question:
%\begin{question}
%Does every computable $\msf{STT}_2^2$ instance admit a low solution?
%\end{question}

%Although $\msf{CTT}_2^2$ is not quite similar to $\msf{COH}$ in the sense that not every finite tree can be extended to an infinite perfect tree that solves a given %instance of $\msf{CTT}^2_2$ (while every finite set cvan be extended to an infinite set that solves an instance of $\msf{COH}$,
%the following question
 %\begin{question}\label{tt22vswklques1}
%Does $\msf{SRT}_2^2$ imply $\msf{CTT}_2^2$
%(on $\omega$ model)?
%\end{question}

Since $\msf{CTT}_2^2$ can be decomposed  into
$\msf{wCTT}_2^2$ and $k$-$\mathsf{TSP}$, , and
$\msf{wCTT}_2^2$ is  a natural generalization of $\msf{COH}$ to trees,
to resolve the above question\label{tt22vswklques1},
one can begin with investigating a question about solutions of a  $k$-\treesplit:

\begin{question}\label{tt22vswklques2}
Does there exist a $k$-\treesplit\ $f$ such that
 every  instance of $\msf{RT}_2^1$   has a solution
that does not compute
an infinite perfect \treeset\ homogeneous
for $f$ (hence a solution of $f$)?

\end{question}
One conjectures that the answer to the above question is in the affirmative, since  coding an object typically proceeds in a fairly straightforward manner, while there is no obvious way to code
a solution of a $k$-\treesplit\ into that of an $\msf{RT}_2^1$-instance.
Indeed  it seems  that coding a $k$-\treesplit\ through  subsets is impossible,
i.e.~the following question should have a positive answer:
\begin{question}\label{tt22vswklques3}
Does there exist a $k$-\treesplit\ $f$ and an infinite set $X$
such that no  $G\subset X$
 computes a solution of $f$?

\end{question}

Subset coding is more powerful than coding via solution of an    $\msf{RT}_2^1$-instance. Hence
 an affirmative answer to Question \ref{tt22vswklques3} would imply
an affirmative answer to Question \ref{tt22vswklques2}. We can also consider
a weaker coding scheme, namely coding via a fast growing function:
\begin{question}\label{tt22vswklques4}
Does there exist a $k$-\treesplit\ $f$ such that
for every $h\in\omega^\omega$, there exists an
$\h{h}\in\omega^\omega$ with $\h{h}(n)\geq h(n)$ for all $n$,
and  $\h{h}$ does not compute a solution of $f$?

\end{question}
Clearly an affirmative answer to Question \ref{tt22vswklques3} would imply
an affirmative answer to Question \ref{tt22vswklques4}. We conjecture that both questions have a positive
answer.

\bibliographystyle{plain}
\bibliography{bibliographylogic}

\begin{thebibliography}{10}

\bibitem{Cholak2001strength}
Peter~A. Cholak, Carl~G. Jockusch, and Theodore~A. Slaman.
\newblock {On the strength of Ramsey's theorem for pairs}.
\newblock {\em Journal of Symbolic Logic}, 66(01):1--55, 2001.

\bibitem{Chong2014metamathematics}
Chitat Chong, Theodore Slaman, and Yue Yang.
\newblock {The metamathematics of stable {R}amsey’s theorem for pairs}.
\newblock {\em Journal of the American Mathematical Society}, 27(3):863--892,
  2014.

\bibitem{chong2017inductive}
CT~Chong, Theodore~A Slaman, and Yue Yang.
\newblock The inductive strength of ramsey's theorem for pairs.
\newblock {\em Advances in Mathematics}, 308:121--141, 2017.

\bibitem{Corduan2010Reverse}
Jared Corduan, Marcia~J. Groszek, and Joseph~R. Mileti.
\newblock Reverse mathematics and {R}amsey's property for trees.
\newblock {\em Journal of Symbolic Logic}, 75(03):945--954, 2010.

\bibitem{Dzhafarov2009polarized}
Damir~D. Dzhafarov and Jeffry~L. Hirst.
\newblock {The polarized {R}amsey’s theorem}.
\newblock {\em Archive for Mathematical Logic}, 48(2):141--157, 2009.

\bibitem{DzhafarovColoring}
Damir~D Dzhafarov and Ludovic Patey.
\newblock Coloring trees in reverse mathematics.
\newblock To appear. Available at \url{https://arxiv.org/abs/1609.02627}.

\bibitem{Hirschfeldt2008strength}
Denis~R. Hirschfeldt, Carl~G. Jockusch, Bj{\o}rn Kjos-Hanssen, Steffen Lempp,
  and Theodore~A. Slaman.
\newblock {The strength of some combinatorial principles related to {R}amsey's
  theorem for pairs}.
\newblock {\em Computational Prospects of Infinity, Part II: Presented Talks,
  World Scientific Press, Singapore}, pages 143--161, 2008.

\bibitem{Hirschfeldt2007Combinatorial}
Denis~R. Hirschfeldt and Richard~A. Shore.
\newblock {Combinatorial principles weaker than {R}amsey's theorem for pairs}.
\newblock {\em Journal of Symbolic Logic}, 72(1):171--206, 2007.

\bibitem{Jockusch1972Ramseys}
Carl~G. Jockusch.
\newblock Ramsey's theorem and recursion theory.
\newblock {\em Journal of Symbolic Logic}, 37(2):268--280, 1972.

\bibitem{Liu2012RT22}
Lu~Liu.
\newblock {RT$^2_2$ does not imply WKL$_0$}.
\newblock {\em Journal of Symbolic Logic}, 77(2):609--620, 2012.

\bibitem{Liu2015Cone}
Lu~Liu.
\newblock Cone avoiding closed sets.
\newblock {\em Transactions of the American Mathematical Society},
  367(3):1609--1630, 2015.

\bibitem{monin2019srt22}
Benoit Monin and Ludovic Patey.
\newblock Srt22 does not imply coh in omega-models.
\newblock {\em arXiv preprint arXiv:1905.08427}, 2019.

\bibitem{Patey2015Iterativea}
Ludovic Patey.
\newblock Iterative forcing and hyperimmunity in reverse mathematics.
\newblock {\em Computability}, 2015.
\newblock To appear.

\bibitem{Patey2015strength}
Ludovic Patey.
\newblock The strength of the tree theorem for pairs in reverse mathematics.
\newblock {\em Journal of Symbolic Logic}, 2015.
\newblock To appear. Available at \url{http://arxiv.org/abs/1505.01057}.

\bibitem{Patey2015proof}
Ludovic Patey and Keita Yokoyama.
\newblock The proof-theoretic strength of {R}amsey’s theorem for pairs and
  two colors.
\newblock Submitted. Available at \url{http://arxiv.org/abs/1601.00050}., 2015.

\bibitem{Seetapun1995strength}
David Seetapun and Theodore~A. Slaman.
\newblock {On the strength of {R}amsey's theorem}.
\newblock {\em Notre Dame Journal of Formal Logic}, 36(4):570--582, 1995.

\bibitem{Simpson2009Subsystems}
Stephen~G. Simpson.
\newblock {\em {Subsystems of Second Order Arithmetic}}.
\newblock Cambridge University Press, 2009.

\bibitem{Yu1990Measure}
Xiaokang Yu and Stephen~G. Simpson.
\newblock {Measure theory and weak K\"onig's lemma}.
\newblock {\em Archive for Mathematical Logic}, 30(3):171--180, 1990.

\end{thebibliography}
\end{document}